\renewcommand{\epsilon}{{\varepsilon}}
\numberwithin{equation}{section}
\newtheorem{theorem}{Theorem}[section]
\newtheorem{lemma}[theorem]{Lemma}
\newtheorem{remark}[theorem]{Remark}
\newtheorem{proposition}[theorem]{Proposition}
\newcommand{\C}{\mathbb C}
\newcommand{\R}{\mathbb R}
\newcommand{\N}{\mathbb N}
\def\({\left(}
\def\){\right)}
\def\<{\left\langle}
\def\>{\right\rangle}
\def\O{\mathcal O}
\def\K{\mathcal K}
\def\EE{\mathcal E}
\def\eps{\varepsilon}
\DeclareMathOperator{\RE}{Re}
\DeclareMathOperator{\IM}{Im}
\begin{document}

\title[NLS with a repulsive Dirac delta potential]
{Threshold scattering for the focusing NLS with a repulsive Dirac delta potential}

\author[Alex H. Ardila]{Alex H. Ardila}
\address{Department of Mathematics, Universidade Federal de Minas Gerais\\ ICEx-UFMG\\ CEP
30123-970\\ MG, Brazil} 
\email{ardila@impa.br}

\author[Takahisa Inui]{Takahisa Inui}
\address{Department of Mathematics, Graduate School of Science, Osaka University\\ Toyonaka\\ Osaka\\ 560-0043, Japan} 
\email{inui@math.sci.osaka-u.ac.jp}

\begin{abstract}
We establish the scattering of solutions to the focusing mass supercritical nonlinear Schr\"odinger equation
with a repulsive Dirac delta potential
\[
i\partial_{t}u+\partial^{2}_{x}u+\gamma\delta(x)u+|u|^{p-1}u=0, \quad (t,x)\in \R\times\R,
\]
at the mass-energy threshold, namely, when
$E_{\gamma}(u_{0})[M(u_{0})]^{\sigma}=E_{0}(Q)[M(Q)]^{\sigma}$
where $u_{0}\in H^{1}(\R)$ is the initial data, $Q$ is the ground state of the free NLS 
on the real line $\R$, $E_{\gamma}$ is the energy, $M$ is the mass and $\sigma=(p+3)/(p-5)$.
We also prove failure of the uniform space-time bounds at the mass-energy threshold.
\end{abstract}

\subjclass[2010]{35Q55, 37K45, 35P25.}
\keywords{NLS with delta potential; Ground state; Scattering; Compactness.}

\maketitle

\medskip

\section{Introduction}
\label{sec:intro}
In this paper we consider the following nonlinear Schr\"odinger equation with a
delta potential
\begin{equation}\label{NLS}
\begin{cases}
i\partial_{t}u(t,x)=-\partial^{2}_{x}u- \gamma\delta(x)u-|u|^{p-1}u,\\
u(0,x)=u_{0}\in H^{1}_{x}(\R),
\end{cases}
\end{equation}
where $\gamma\in (-\infty, 0)$, $p>5$ and  $u=u(t,x)$ is a complex-valued function of $(t,x)\in \mathbb{R}\times\mathbb{R}$.
Here, $\delta$ is the Dirac distribution at the origin, i.e. $\<\delta, v\>=v(0)$ for $v\in H^{1}(\R)$.
The Dirac distribution $\delta$ is used to model a defect localized at the origin (see, for example, \cite{GHW}). 

Due to their abundance physical and mathematical properties the NLS \eqref{NLS} has drawn much attention of physicists and mathematicians \cite{AFI, GHW, APLD, BMQT}. The  effect  of  the $\delta(x)$-potential on the  dynamics  of  the  nonlinear  Schr\"odinger  equation has been studied intensively in later years.  The Cauchy problem,  existence of ground states and their stability/instability, 
long time dynamics (scattering and global existence, blow-up) to \eqref{NLS} with data below the ground state threshold,  etc., have been  studied in recent years; see for example \cite{ArCeGo, RJJ, FO, LFF, MT2, BaniVisci2016} for more details.

The formal expression of the operator $-\partial^{2}_{x}-\gamma\delta(x)$ which appears in \eqref{NLS} admits a precise interpretation as a self-adjoint  operator $H_{\gamma}$ on the space $L^{2}(\R)$. Indeed,  we have formally
\begin{equation*}
\< (-\partial^{2}_{x}-\gamma\delta(x))f, g\>=\mathfrak{t}_{\gamma}[f,g] \quad \text{for $f$, $g\in H^{1}(\R)$},
\end{equation*}
where $\mathfrak{t}_{\gamma}$ is the quadratic form defined by 
\begin{equation}\label{UKI}
\mathfrak{t}_{\gamma}[f,g]=\RE\int_{\mathbb{R}}\partial_{x}f\overline{\partial_{x} g} dx-\gamma\RE \left[f(0) \overline{g(0)}\right].
\end{equation}
As the bilinear form  $\mathfrak{t}_{\gamma}$ is bounded from below and closed on $H^{1}(\R)$, it is possible to show that the self-adjoint operator associated with $\mathfrak{t}_{\gamma}$ is  given by (see \cite[Theorem 10.7 and Example 10.7]{KSN})
\[
\begin{cases}
 H_{\gamma} f(x)=-\frac{d^2}{dx^2} f(x)\quad \text{for}\;\; x\neq 0,\\
f\in \mathrm{dom}(H_{\gamma})=\left\{f\in H^{1}(\R)\cap H^{2}(\R\setminus \left\{0 \right\}) : f'(0+)- f'(0-)=-\gamma f(0)\right\}.
\end{cases}
\]
We observe that the operador  $H_{\gamma}$ can also be defined via theory of self-adjoint extensions of symmetric operators (see \cite{APIN})

It is well-known that (see \cite{FO}) for $u_{0}\in H^{1}({\mathbb{R}})$, there exists $T_{\ast}=T(\|u_{0}\|_{H^{1}})>0$ and a unique maximal solution  $u(t,x)$ to \eqref{NLS}  on $[0,T_{\ast})$ satisfying  $u(0)=u_{0}$ and $u\in C([0, T_{\ast}), H^{1}({\mathbb{R}}))$. Furthermore, for all $t\in [0,T_{\ast})$, the solution satisfies the conservation of energy and mass
\begin{equation*}
E_{{\gamma}}(u(t))=E_{{\gamma}}(u_{0})\quad  \text{and} \quad M(u(t))=M(u_{0}),
\end{equation*}
where
\begin{align*}
E_{{\gamma}}(u)&=\frac{1}{2}\|\partial_{x}u\|^{2}_{L^{2}}-\frac{\gamma}{2}|u(0)|^{2}-\frac{1}{p+1}\|u\|_{L^{p+1}}^{p+1},\quad M(u)=\|u\|^{2}_{L^{2}}.
\end{align*}
By the classical Gagliardo-Nirenberg inequality and conservation laws it is not difficult to show that if $1<p<5$, then  global well-posedness of \eqref{NLS} holds in $H^{1}(\mathbb{R})$.

We say that the solution $u(t)$ to \eqref{NLS} scatters in $H^{1}(\R)$ forward in time, if is defined for any $t\in [0, \infty)$ and  there exists $\psi^{+}\in H^{1}(\R)$
such that
\[
\lim_{t\to\infty}\|u(t)-e^{-itH_{\gamma}}\psi^{+}\|_{H^{1}(\R)}=0.
\]

The scattering theory for equation \eqref{NLS} is related with the ground state $Q$ of the free NLS (\eqref{NLS} with $\gamma=0$), which is the unique positive, symmetric and decreasing solution 
of the following  elliptic equation
\begin{equation}\label{Eqp}
\partial^{2}_{x} Q- Q+Q^{p}=0 \quad\text{in} \quad \mathbb{R}.
\end{equation}
We recall that such $Q$ is exponentially decaying at infinity, and
characterized as the unique minimizer for the Gagliardo-Nirenberg inequality (up to symmetries); see \cite{Weinstein1983} for more details.
The ground state plays a key role in the long-time dynamics of \eqref{NLS}.

The behavior of solutions below the ground state level are now well understood for equation \eqref{NLS}. Indeed,
in \cite{MT2}, M. Ikeda and T. Inui found a necessary and sufficient condition on the data below
the ground state to determine the global behavior (i.e., scattering/blow-up) of the solution.
The dichotomy in behaviour of solutions below the ground state is dependent upon the sign of the functional
\begin{align}
P_{\gamma}(u)&=\|\partial_{x}u\|^{2}_{L^{2}}-\frac{\gamma}{2}|u(0)|^{2}-\frac{(p-1)}{2(p+1)} \|u\|^{p+1}_{L^{p+1}}.
\end{align}
The scattering result of M. Ikeda and T. Inui \cite{MT2} is contained in the next theorem.

\begin{theorem}[Sub-threshold scattering, \cite{MT2}]\label{Th1}
Let $\gamma<0$.  Let $u(t)$ be the corresponding solution to \eqref{NLS} with initial data $u_{0}\in H^{1}(\R)$. If $u_{0}$ obeys
\begin{equation}\label{SubT}
E_{\gamma}(u_{0})[M(u_{0})]^{\sigma}<E_{0}(Q)[M(Q)]^{\sigma}
\quad\text{and}\quad
P_{\gamma}(u_{0})\geq 0,
\end{equation}
where $\sigma=(p+3)/(p-5)$, then the solution $u(t)$ exists globally and scatters in $H^{1}(\R)$.
\end{theorem}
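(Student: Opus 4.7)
The plan is to follow the Kenig--Merle concentration-compactness/rigidity roadmap, adapted to the presence of the non-translation-invariant potential $-\gamma\delta(x)$.

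\emph{Variational step.} Under the hypothesis $E_\gamma(u_0)M(u_0)^\sigma < E_0(Q)M(Q)^\sigma$ and $P_\gamma(u_0)\geq 0$, the sharp Gagliardo--Nirenberg inequality (whose extremizer is $Q$, by Weinstein) combined with $\gamma<0$ yields a uniform a priori bound on the scale-invariant kinetic quantity $\|\partial_x u(t)\|_{L^2}^{2}M(u_0)^{\sigma}$, strictly below the extremal value $\|\partial_x Q\|_{L^2}^{2}M(Q)^{\sigma}$. The sign condition $P_\gamma(u(t))\geq 0$ is then propagated by continuity in time (a zero of $P_\gamma$ would force the energy--mass product to reach the threshold), so the solution is global with a quantitative coercive lower bound of the form $P_\gamma(u(t))\gtrsim\|\partial_x u(t)\|_{L^2}^{2}$. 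Small-data scattering follows from Strichartz estimates for $e^{-itH_\gamma}$: since $\gamma<0$ makes $H_\gamma\geq 0$ with purely absolutely continuous spectrum $[0,\infty)$, the dispersive estimate matches that of the free propagator.

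\emph{Concentration-compactness.} By a standard stability/perturbation argument, global scattering reduces to an a priori global bound on a suitable Strichartz scattering norm. Assume, for contradiction, that this bound fails along some admissible data, and extract a minimizing sequence. One applies a linear profile decomposition adapted to $e^{-itH_\gamma}$; the essential new feature is that $H_\gamma$ loses translation invariance, so profiles whose centers $x_n^{j}$ remain bounded evolve under the NLS with delta, while profiles with $|x_n^{j}|\to\infty$ decouple from the potential and evolve under the free NLS. Because the mass--energy threshold coincides for both equations (namely $E_0(Q)M(Q)^{\sigma}$) and $-\gamma>0$ only lowers the energy of any given profile, one can peel off all but one profile and produce a critical element $u_c$: a non-scattering sub-threshold solution whose $H^{1}$-trajectory is precompact in time.

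\emph{Rigidity.} To rule out $u_c$ I would use a localized virial identity. For a suitable smooth weight $\psi_R$ with $\psi_R(x)=|x|$ on $|x|\leq R$, the quantity $\int \psi_R(x)|u(t,x)|^{2}\,dx$ has second time derivative equal to $8P_\gamma(u(t))$ plus a boundary contribution $-4\gamma\,\psi_R'(0)|u(t,0)|^{2}=4|\gamma|\,\psi_R'(0)|u(t,0)|^{2}\geq 0$ from the repulsive delta, plus tail remainders controlled by the precompactness of $u_c$. Combined with the coercive lower bound $P_\gamma(u(t))\gtrsim 1$, integrating over a long time window contradicts the uniform boundedness of the virial quantity, forcing $u_c\equiv 0$ and hence scattering.

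The main obstacle is the profile decomposition bookkeeping in the two regimes: one must carefully justify that nonlinear profiles with divergent $x_n^{j}$ are approximated, in Strichartz topology, by solutions of the free NLS (so that the known free-NLS scattering below the threshold applies), and that the "stuck" profile inherits the strict sub-threshold hypothesis. A secondary subtlety is the choice of the virial weight: one needs $\psi_R$ convex enough in the interior to reproduce $8P_\gamma$ while having $\psi_R'(0)>0$ so that the boundary term from the delta acquires the favorable sign --- and it is precisely here that the repulsive sign $\gamma<0$ plays a decisive role.
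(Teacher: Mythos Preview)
The paper does not give its own proof of this theorem: it is quoted from Ikeda--Inui \cite{MT2} and serves only as background for the threshold result (Theorem~\ref{Th2}). Your outline is indeed the concentration-compactness/rigidity scheme carried out in \cite{MT2}, and the bookkeeping you flag---escaping profiles evolve by the free NLS, the two thresholds coincide, stuck profiles inherit the strict sub-threshold condition---is precisely the content of the nonlinear-profile construction there and in \cite{BaniVisci2016}.

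There is, however, one concrete error in your rigidity sketch. The localized virial weight that produces $8P_\gamma(u)$ as the main term must satisfy $\psi_R''\equiv 2$ in the bulk, i.e.\ $\psi_R(x)=x^2$ on $|x|\le R$, not $|x|$. With this choice $\psi_R'(0)=0$, so the boundary term you wrote, $-4\gamma\,\psi_R'(0)|u(t,0)|^2$, vanishes identically. The favorable delta contribution enters instead through the second-derivative term $-2\gamma\,\psi_R''(0)|u(t,0)|^2=-4\gamma|u(t,0)|^2\ge 0$; compare the identity in Lemma~\ref{VirialIden}. A weight with $\psi_R'(0)\neq 0$ would also generate a cross term $-2\gamma\,\psi_R'(0)\,\RE\bigl(u(t,0)\overline{\partial_x u(t,0)}\bigr)$ of indefinite sign, so your stated choice would not close. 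Apart from this slip the strategy is the right one; you should also make explicit the control $|x(t)|=o(t)$ on the spatial center of the compact orbit, since without it the truncation remainders cannot be absorbed on long time intervals.
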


The theorem above is a consequence of the fact that the solutions to \eqref{NLS} obeys the global spacetime bound
\begin{equation}\label{GlobalBoundB}
\|u\|_{L^{a}_{t}L^{r}_{x}(\R\times\R)}<C(E_{\gamma}(u_{0}), M(u_{0}), E_{0}(Q), M(Q)),
\end{equation}
where $a=\frac{2(p-1)(p+1)}{p+3}$ and $r=p+1$. Note also that if $u_{0}\neq 0$ satisfies the condition \eqref{SubT}, then by \cite[Proposition 2.18]{MT2} the corresponding solution $u(t)$ to \eqref{NLS}
with initial data $u_{0}$ obeys the uniform bound $P_{\gamma}(u(t))\gtrsim _{Q, u_{0}}1$ for all $t\in \R$.

The purpose of this paper is to study  the long time dynamics for \eqref{NLS}
exactly at the mass-energy threshold, i.e. when $E_{\gamma}(u_{0})[M(u_{0})]^{\sigma}=E_{0}(Q)[M(Q)]^{\sigma}$.
With this in mind, we establish our first result.

\begin{theorem}[Failure of uniform space-time bounds at threshold.]\label{BoundTheorem}
Let $\gamma<0$.  Then there exists a sequence of global solutions $u_{n}$ of \eqref{NLS} such that
\[
E_{\gamma}(u_{n})[M(u_{n})]^{\sigma}\nearrow  E_{0}(Q)[M(Q)]^{\sigma}
\quad \text{and}\quad 
P_{\gamma}(u_{n}(0))\to 0
\]
as $n\to \infty$ with
\[
\lim_{n\to \infty}\|u_{n}\|_{L^{a}_{t}L^{r}_{x}(\R\times\R)}=\infty.
\]
\end{theorem}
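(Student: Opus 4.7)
The plan is to construct the sequence explicitly from translated and slightly rescaled copies of the free ground state, and then force the scattering norm to diverge by comparing the solutions, on compact time windows, with the free NLS standing wave, which does not scatter.

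\textbf{Step 1 (construction of the initial data).} I would take
\[
u_{0,n}(x):=\beta_{n}\,Q(x-y_{n}),\qquad \beta_{n}:=1-\tfrac{1}{n},\quad y_{n}:=2\log n.
\]
Using the Pohozaev identity $\|Q'\|_{L^{2}}^{2}=\frac{p-1}{2(p+1)}\|Q\|_{L^{p+1}}^{p+1}$, which gives $P_{0}(Q)=0$, and the fact that translation preserves $M$, $\|\partial_{x}\cdot\|_{L^{2}}$ and $\|\cdot\|_{L^{p+1}}$, a direct computation gives
\[
P_{\gamma}(u_{0,n})=\tfrac{p-1}{2(p+1)}\|Q\|_{L^{p+1}}^{p+1}\beta_{n}^{2}(1-\beta_{n}^{p-1})+\tfrac{|\gamma|}{2}\beta_{n}^{2}Q(y_{n})^{2},
\]
which is strictly positive and tends to $0$. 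For the mass-energy I would set
\[
F_{0}(\beta):=\tfrac{\beta^{2\sigma+2}}{2}\|Q'\|_{L^{2}}^{2}-\tfrac{\beta^{2\sigma+p+1}}{p+1}\|Q\|_{L^{p+1}}^{p+1},\qquad E_{\gamma}(u_{0,n})M(u_{0,n})^{\sigma}=M(Q)^{\sigma}\bigl(F_{0}(\beta_{n})+\tfrac{|\gamma|}{2}\beta_{n}^{2\sigma+2}Q(y_{n})^{2}\bigr).
\]
A short computation using $\sigma=(p+3)/(p-5)$ yields $F_{0}(1)=E_{0}(Q)$, $F_{0}'(1)=0$ and $F_{0}''(1)=-\frac{(p-1)^{3}}{(p+1)(p-5)}\|Q\|_{L^{p+1}}^{p+1}<0$, so $F_{0}(\beta_{n})=E_{0}(Q)-c n^{-2}+O(n^{-3})$ for some $c>0$. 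Since $Q$ decays exponentially, the delta correction $Q(y_{n})^{2}=O(n^{-4})$ is negligible compared to this gap, hence $E_{\gamma}(u_{0,n})M(u_{0,n})^{\sigma}\nearrow E_{0}(Q)M(Q)^{\sigma}$ strictly from below. Theorem~\ref{Th1} then yields global solutions $u_{n}\in C(\R;H^{1}(\R))$ of \eqref{NLS} with finite scattering norm.

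\textbf{Step 2 (reduction to a standing wave by translation).} I argue by contradiction: assume $\|u_{n}\|_{L^{a}_{t}L^{r}_{x}}\le C$ along a subsequence. The translates $v_{n}(t,x):=u_{n}(t,x+y_{n})$ share the same scattering norm and solve
\[
i\partial_{t}v_{n}+\partial_{x}^{2}v_{n}+\gamma\,\delta(x+y_{n})\,v_{n}+|v_{n}|^{p-1}v_{n}=0,\qquad v_{n}(0,x)=\beta_{n}Q(x).
\]
As $n\to\infty$ the delta potential escapes to $-\infty$ and the datum converges to $Q$ in $H^{1}$, so the formal limit is the free NLS standing wave $w(t,x):=e^{it}Q(x)$, which satisfies $\|w\|_{L^{a}_{t}L^{r}_{x}([0,T]\times\R)}=T^{1/a}\|Q\|_{L^{p+1}}\to\infty$ as $T\to\infty$. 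It therefore suffices to show that on every fixed $[0,T]$ we have $v_{n}\to w$ in $L^{\infty}([0,T];H^{1}(\R))$, as the one-dimensional embedding $H^{1}(\R)\hookrightarrow L^{p+1}(\R)$ then promotes this to $\|v_{n}\|_{L^{a}_{t}L^{r}_{x}([0,T]\times\R)}\to T^{1/a}\|Q\|_{L^{p+1}}$, and choosing $T$ large contradicts the bound $C$.

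\textbf{Step 3 (stability on compact intervals; main obstacle).} The difference $d_{n}=v_{n}-w$ solves
\[
i\partial_{t}d_{n}+\partial_{x}^{2}d_{n}+(|v_{n}|^{p-1}v_{n}-|w|^{p-1}w)=-\gamma\,v_{n}(t,-y_{n})\,\delta(x+y_{n}),\quad d_{n}(0)=(\beta_{n}-1)Q,
\]
and the 1D trace inequality yields $|v_{n}(t,-y_{n})|\le Q(y_{n})+\|d_{n}(t)\|_{H^{1}}$. This is the delicate point: the source is a distribution supported at the single moving point $-y_{n}$ and is not in $L^{2}$, so a naive $H^{1}$-Strichartz perturbation argument does not apply directly. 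I would instead invoke Strichartz estimates for $e^{-itH_{\gamma,y_{n}}}$ (which take the same form as in the free case, the delta being a short-range perturbation) together with the $H^{-1}$-bound $\|v_{n}(t,-y_{n})\delta(\cdot+y_{n})\|_{H^{-1}}\lesssim Q(y_{n})+\|d_{n}(t)\|_{H^{1}}$ and the local Lipschitz property of $z\mapsto|z|^{p-1}z$ on bounded $H^{1}$ sets. Closing with Gronwall's inequality on $[0,T]$ produces
\[
\|d_{n}\|_{L^{\infty}([0,T];H^{1}(\R))}\lesssim_{T}|\beta_{n}-1|+Q(y_{n})\longrightarrow 0\quad\text{as }n\to\infty.
\]
The smallness is genuinely $T$-dependent, which forces the order of limits $n\to\infty$ first, $T\to\infty$ afterwards (uniform-in-$T$ control is impossible since $w$ itself has infinite scattering norm). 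Once this stability step is in place, the contradiction in Step~2 closes and the theorem follows.
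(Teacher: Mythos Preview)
Your construction in Step~1 and the overall strategy (compare the solution on a finite window $[0,T]$ to the free standing wave $e^{it}Q(\cdot-y_n)$, then send $T\to\infty$) are exactly those of the paper. The issue is the execution of Step~3.

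The $L^\infty_tH^1_x$--Gronwall argument you sketch does not close. You correctly identify that the source $-\gamma\,v_n(t,-y_n)\,\delta(\cdot+y_n)$ is merely in $H^{-1}_x$; but then neither the free propagator nor $e^{-itH_{\gamma,y_n}}$ improves Sobolev regularity, so the Duhamel contribution of the delta can only be placed in $L^\infty_tH^{-1}_x$, not $L^\infty_tH^1_x$. Switching to the $H_{\gamma,y_n}$--propagator replaces the source by $\gamma\,e^{it}Q(y_n)\,\delta(\cdot+y_n)$ (small coefficient, still a delta), and the same obstruction persists. An $L^2$--energy identity does not help either, since the boundary term it produces is $\gamma\,\IM\bigl(v_n(t,-y_n)\overline{d_n(t,-y_n)}\bigr)$, which is controlled only through $\|d_n\|_{H^1}$ and therefore does not feed back into an $L^2$ Gronwall. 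In short, the ``$H^{-1}$--bound plus Strichartz plus Gronwall'' route you propose does not deliver the claimed $\|d_n\|_{L^\infty([0,T];H^1)}\lesssim_T |\beta_n-1|+Q(y_n)$.

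The paper avoids this obstruction by never attempting $H^1$--convergence. It works directly in the scattering norm $L^a_tL^r_x$: one sets $\tilde v_n(t,x)=(1-\epsilon_n)e^{it}Q(x-x_n)$ and shows that its Duhamel error, measured in $L^a_tL^r_x([-T,T]\times\R)$, tends to $0$ as $n\to\infty$. The key input is the argument of \cite[Proposition~3.4]{BaniVisci2016}, which says precisely that a free NLS solution translated by $x_n\to\infty$ is an approximate solution of the $H_\gamma$--NLS in the $L^a_tL^r_x$ sense; this is where the escaping delta is handled, at the Strichartz level rather than in $H^1$. Then the long time perturbation Lemma~\ref{perturb} (which only needs $H^1$--closeness of the data and $L^a_tL^r_x$--smallness of the error) gives $\|u_n-\tilde v_n\|_{L^a_tL^r_x([-T,T])}\to 0$, hence $\|u_n\|_{L^a_tL^r_x([-T,T])}\gtrsim_Q T^{1/a}$. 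Replacing your Step~3 by this $L^a_tL^r_x$--perturbation argument fixes the gap and, incidentally, makes the contradiction framing in Step~2 unnecessary.
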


Theorem \ref{BoundTheorem} shows that the hypothesis \eqref{SubT} is sharp, i.e. when we approach the 
mass-energy threshold, the constant $C$ in \eqref{GlobalBoundB} diverges.

We now state the main result of this paper.

\begin{theorem}[Threshold scattering]\label{Th2}
Let $\gamma<0$.  Let $u(t)$ be the corresponding solution to \eqref{NLS} with initial data $u_{0}\in H^{1}(\R)$. If $u_{0}$ obeys
\begin{equation}\label{Thres}
E_{\gamma}(u_{0})[M(u_{0})]^{\sigma}=E_{0}(Q)[M(Q)]^{\sigma}
\quad\text{and}\quad P_{\gamma}(u_{0})\geq 0,
\end{equation}
then the solution $u(t)$ exists globally and $u\in L^{a}_{t}L^{r}_{x}(\R\times\R)$.
In particular, the solution $u$ scatters in $H^{1}(\R)$.
\end{theorem}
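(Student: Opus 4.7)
The plan is to follow the concentration--compactness and rigidity scheme of Kenig--Merle, in the threshold variant of Duyckaerts--Roudenko, adapted to the lack of translation invariance imposed by the delta potential. Throughout, I assume the contrapositive: there exists a solution $u$ satisfying \eqref{Thres} which does not scatter forward in time (the backward case is symmetric).

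First I would carry out a variational analysis at the threshold. The condition \eqref{Thres} together with $u_0\not\equiv 0$ forces $P_\gamma(u(t))>0$ on the entire maximal interval of existence. Indeed, the infimum of $E_\gamma(u)M(u)^\sigma$ over $\{u\in H^1(\R)\setminus\{0\}:\ P_\gamma(u)=0\}$ equals $E_0(Q)M(Q)^\sigma$ but is \emph{not} attained, because any minimizer $v$ would have to satisfy $v(0)=0$ (otherwise $\gamma<0$ strictly raises the energy above the free one) and simultaneously be a free-NLS minimizer $v=Q(\cdot-x_0)$ up to symmetries, which is impossible. Since $E_\gamma(u(t))M(u(t))^\sigma$ is conserved and equals this threshold, $P_\gamma(u(t))$ cannot vanish, so by continuity it stays strictly positive. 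This yields a uniform $H^1$-bound via the same Gagliardo--Nirenberg argument underlying Theorem \ref{Th1}, and hence global existence.

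Next I would construct a critical element $u_c$. The necessary ingredient is a linear profile decomposition adapted to the propagator $e^{-itH_\gamma}$, whose profiles split into two types: \emph{bounded} profiles, whose translation parameters stay bounded and whose nonlinear evolutions feel the delta potential, and \emph{drifting} profiles, whose translation parameters satisfy $|x_n|\to\infty$ and whose nonlinear evolutions are governed by the free NLS. Combining this decomposition with a perturbation theory for \eqref{NLS}, Theorem \ref{Th1}, and an induction on $E_\gamma M^\sigma$, one extracts a minimal non-scattering solution $u_c$ at the threshold. The delicate step is to show that only one profile survives, and that it is of bounded type: drifting profiles must be excluded because, by the free-NLS threshold theory, they either scatter (closing the induction) or produce a Duyckaerts--Roudenko $Q^\pm$-type behavior whose non-scattering piece cannot be reconciled with concentration at the origin. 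After this reduction, $\{u_c(t):t\in\R\}$ is pre-compact in $H^1(\R)$. The rigidity step is then a standard localized virial identity: for a smooth cutoff $\chi_R$,
\[
\frac{d^2}{dt^2}\int \chi_R(x)|u_c(t,x)|^2\,dx=8P_\gamma(u_c(t))+o_R(1),
\]
where the error tends to $0$ as $R\to\infty$ \emph{uniformly in }$t$ thanks to compactness. The coercivity $P_\gamma(u_c(t))\geq c>0$ from the variational step makes the right-hand side strictly positive for large $R$, contradicting the boundedness in $t$ of $\int \chi_R|u_c|^2\,dx$. No critical element exists, and Theorem \ref{Th2} follows.

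The \textbf{main obstacle} I foresee is the precise implementation of the profile decomposition and the exclusion of drifting profiles. Unlike the autonomous NLS, translations are not symmetries, so one must carefully track the interaction between profiles drifting to infinity and the fixed point $x=0$; at the exact threshold a drifting profile could in principle carry $Q^\pm$-type non-scattering behavior inherited from the free equation, and ruling this out requires fine estimates showing that such a drifting threshold element cannot reassemble into a non-scattering solution of the perturbed equation. This bookkeeping, together with the construction of a nonlinear profile theory that simultaneously accommodates $e^{-itH_\gamma}$- and $e^{it\partial_x^2}$-type evolutions, is the technical heart of the argument.
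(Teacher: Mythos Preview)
Your variational step is essentially the paper's Lemma~\ref{GlobalS}, and the overall concentration--compactness scaffold is right. The gap is in the compactness and rigidity steps.

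You assert that after excluding drifting profiles the critical element satisfies ``$\{u_c(t):t\in\R\}$ is pre-compact in $H^1(\R)$'' with no spatial center. This is not what the profile decomposition for $e^{-itH_\gamma}$ delivers: applying it to $u(\tau_n)$ and reducing to a single profile only forces the \emph{time} parameter $t_n$ to be zero; the translation parameter $x_n$ may still escape to infinity. The paper (Proposition~\ref{Criticalsolution}) accordingly obtains only pre-compactness of $\{u(t,\cdot+x(t)):t\geq 0\}$ modulo a time-dependent center $x(t)$, and the entire remainder of the argument is devoted to controlling $x(t)$. Your proposed exclusion of drifting profiles via the free-NLS $Q^{\pm}$ classification does not close this, because a single bounded-type profile at each fixed $\tau_n$ says nothing about the drift of the center as $t\to\infty$.

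This feeds directly into the failure of your rigidity step. At the exact threshold, the uniform coercivity $P_\gamma(u_c(t))\geq c>0$ you invoke is \emph{false}: whenever the solution approaches a far-translated copy of $Q$ (equivalently $\mu(t):=\|\partial_x Q\|_{L^2}^2-\|u(t)\|_H^2\to 0$, which by Lemma~\ref{DeltaZero} is exactly when $|x(t)|\to\infty$), one has $P_\gamma(u(t))\to 0$. The paper replaces your uniform lower bound by the degenerate estimate $I_{\infty,0}[u(t)]\gtrsim \mu(t)$ (Lemma~\ref{Compa}), and then runs a two-case rigidity: if $x(t)$ is bounded, a localized virial yields $\tfrac{1}{T}\int_0^T\mu(t)\,dt\to 0$, producing a sequence with $\mu(t_n)\to 0$ and hence $|x(t_n)|\to\infty$, a contradiction (Proposition~\ref{BoundedUN}); if $x(t)$ is unbounded, a modulation analysis around $e^{i\theta(t)}Q(\cdot-y(t))$ (Proposition~\ref{Modilation11}) supplying the bounds $|y'(t)|+|u(t,0)|\lesssim\mu(t)$, combined with a refined virial estimate (Lemma~\ref{Lemma11}) and control of $|x(t_1)-x(t_2)|$ (Proposition~\ref{Spatialcenter}), forces $x(t)$ to be bounded (Proposition~\ref{NaoBounded}). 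This modulation layer is precisely what is missing from your sketch and is the technical heart of the threshold case.
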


For the classical cubic NLS in dimension $N=3$, a similar result was originally proven by Duyckaerts-Roudenko \cite{DuyckaertsRou2010}.
In \cite{DuyckaertsRou2010}, all possible behaviors of solutions with initial data at mass and energy ground states threshold are classified.
This result was later extended for any dimension and any power of the nonlinearity for the entire intecritical range in \cite{CamposFarahRou2020}.

We remark that the method developed in \cite{DuyckaertsRou2010,CamposFarahRou2020} cannot be applied directly to \eqref{NLS}. The main difficulty concerning \eqref{NLS} is clearly the presence of the delta potential. In particular, we cannot apply scaling techniques to obtain the compactness for the nonscattering solutions. To overcome this problem, we use a approach based on the work of Miao-Murphy-Zheng\cite{MiaMurphyZheng2021},
which considered the $3d$ focusing cubic NLS with a repulsive potential.
We also mention the work of \cite{DuyLandRoude} who  obtained analogous result to Theorem \ref{Th2} for the cubic NLS in the exterior
of a convex obstacle.

\begin{remark}
By an application of \cite[Theorem 2.2; see also Example 3.6]{Mizutani2020} we have that the solution $u$ to \eqref{NLS} obtained in Theorem \ref{Th2}  scatters to a free solution in $H^{1}(\R)$; namely there exist asymptotic states $\psi_{\pm}\in H^{1}(\R)$ so that
\[
\|u(t)-e^{it\partial^{2}_{x}}\psi_{\pm}\|_{H^{1}(\R)}\to 0 \quad \text{as $t\rightarrow\pm \infty$}.
\]
\end{remark}

The plan of this paper is as follows. We fix notations at the end of Section \ref{sec:intro}. In Section \ref{S:preli} we give some results that are necessary for later sections, including Strichartz estimates, localized Virial identities, variational analysis, long time perturbation, and linear profile decomposition. In Section \ref{S:Compactness} we show that if Theorem \ref{Th2} fails, then we can find  a solution $u(t)$ of \eqref{NLS} that verifies \eqref{Thres} and a parameter $x(t)$ such that $\left\{u(t, \cdot+ x(t)): t\geq 0\right\}$ is precompact in $H^{1}(\R)$ (Proposition \ref{Criticalsolution}). In Section \ref{S:Modulation}  
we discuss modulation (Proposition \ref{Modilation11}), which plays a vital role in the proof of Theorem \ref{Th2}. 
In Section \ref{S:Impossibility}, we study the behavior of the space translation $x(t)$: More specifically, in Subsections \ref{Sub:11} and \ref{Sub:22} we show that 
if it is bounded then it must be unbounded (Proposition \ref{BoundedUN}), and vice versa (Proposition \ref{NaoBounded});  that is, we show the impossibility  of the `compact' solution $u(t)$ 
established in Section \ref{S:Compactness}, which implies Theorem \ref{Th2}. 
Finally, in Section \ref{BoundL} we prove Theorem \ref{BoundTheorem}.

\textbf{Notations.} We begin with a few remarks on our notation. Given two positive quantities $A$, $B$ we write $A\lesssim B$ or $B\gtrsim A$  to signify $A\leq CB$ for some positive constant  $C$. If  $A \lesssim B \lesssim A$, then we write $A\sim B$. 

For any interval $I\subset \R$, we often write $L^{r}(I)$ to denote the Banach space of functions $f: I:\to \C$ such that the norm
\[
\|f\|_{L^{r}(I)}=\( \int_{\R}|f(x)|^{r}dx\)^{\frac{1}{r}},
\]
is finite, with the usual adjustments when $r=\infty$. 
We write
\[
\|\varphi\|_{H}^2 = \|\partial_x \varphi\|_{L^2}^2 -\gamma |\varphi(0)|^2\quad \text{
for $\varphi\in H^{1}(\R)$}.
\]
Given $p \geq 1$, we denote by $p^{\prime}$ its dual exponent.

\section{Preliminaries}\label{S:preli}

To prove our results, in this section we establish some useful results.

\subsection{Linear Estimates and Local theory}
We fix from now on the following Lebesgue exponent:
\[
r:=p+1, \quad a:=\frac{2(p-1)(p+1)}{p+3}
\quad \text{and}\quad 
b:=\frac{2(p-1)(p+1)}{(p-1)^{2}-(p-1)-4}.
\]
For $\gamma<0$, the Schr\"odinger group obeys the following dispersive estimate:
\[
\|e^{-itH_{\gamma}}f\|_{L^{1}_{x}(\R)\to L^{\infty}_{x}(\R)}\lesssim |t|^{-\frac{1}{2}}.
\]
This estimate implies the following linear estimates that will be fundamental in our study (see \cite[Section 3.1]{BaniVisci2016}):
\begin{align*}
	\|e^{-itH_{\gamma}}f\|_{L^{a}_{t}L^{r}_{x}}&\lesssim \|f\|_{H^{1}},\\
	\|e^{-itH_{\gamma}}f\|_{L^{p-1}_{t}L^{\infty}_{x}}&\lesssim \|f\|_{H^{1}},\\
\|\int^{t}_{0}e^{-i(t-s)H_{\gamma}}g(s)ds\|_{L^{a}_{t}L^{r}_{x}}&\lesssim \|g\|_{L^{b^{\prime}}_{t}L^{r^{\prime}}_{x}},\\
\|\int^{t}_{0}e^{-i(t-s)H_{\gamma}}g(s)ds\|_{L^{p-1}_{t}L^{\infty}_{x}}&\lesssim \|g\|_{L^{b^{\prime}}_{t}L^{r^{\prime}}_{x}}.
\end{align*}

In the following result we have a sufficiently condition for scattering (see \cite[Proposition 3.1]{BaniVisci2016}).
\begin{proposition}\label{ScatterCondi}
Let $u_{0}\in H^{1}(\R)$ and $u$ be the corresponding solution of the Cauchy problem \eqref{NLS} with initial data $u(0)=u_{0}$.
Assume that $u$ is global. If $u\in L^{a}_{t}L^{r}_{x}(\R\times\R)$, then $u$ scatters.
\end{proposition}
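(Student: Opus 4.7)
The plan is to verify the Cauchy criterion for $\{e^{itH_{\gamma}}u(t)\}_{t\geq 0}$ in $H^{1}(\R)$, following the standard scattering scheme for NLS. By Duhamel's formula, for any $t_{2}>t_{1}\geq 0$,
\[
e^{it_{2}H_{\gamma}}u(t_{2})-e^{it_{1}H_{\gamma}}u(t_{1})=i\int_{t_{1}}^{t_{2}}e^{isH_{\gamma}}\bigl(|u(s)|^{p-1}u(s)\bigr)\,ds,
\]
so forward-in-time scattering is equivalent to showing that the right-hand side tends to zero in $H^{1}(\R)$ as $t_{1},t_{2}\to\infty$.

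The first step is to upgrade the hypothesis $\|u\|_{L^{a}_{t}L^{r}_{x}(\R\times\R)}<\infty$ to a full $H^{1}$-level Strichartz bound
\[
\|u\|_{L^{a}_{t}L^{r}_{x}}+\|\partial_{x}u\|_{L^{a}_{t}L^{r}_{x}}+\|u\|_{L^{p-1}_{t}L^{\infty}_{x}}+\|u\|_{L^{\infty}_{t}H^{1}_{x}}\lesssim 1,
\]
which I would obtain by partitioning $\R$ into finitely many intervals $I_{1},\ldots,I_{N}$ on which $\|u\|_{L^{a}_{t}L^{r}_{x}(I_{j})}$ is sufficiently small, and running a standard continuity argument with the four Strichartz estimates above on each $I_{j}$; mass conservation controls the $L^{2}$ piece, and the equivalence $\|\cdot\|_{H^{1}}^{2}\sim \|\cdot\|_{L^{2}}^{2}+\|\cdot\|_{H}^{2}$ handles the boundary term at $x=0$ and lets one replace $\partial_{x}$ by $\langle H_{\gamma}\rangle^{1/2}$, which commutes with the propagator.

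Given $\epsilon>0$, I would then choose $T$ so large that $\|u\|_{L^{a}_{t}L^{r}_{x}([T,\infty))}<\eta$ for a small $\eta$ to be determined, and apply the $H^{1}$ form of the inhomogeneous Strichartz estimate on $[t_{1},t_{2}]\subset [T,\infty)$ to obtain
\[
\left\|\int_{t_{1}}^{t_{2}}e^{isH_{\gamma}}(|u|^{p-1}u)\,ds\right\|_{H^{1}}\lesssim \bigl\||u|^{p-1}u\bigr\|_{L^{b'}_{t}L^{r'}_{x}([t_{1},t_{2}])}+\bigl\||u|^{p-1}\partial_{x}u\bigr\|_{L^{b'}_{t}L^{r'}_{x}([t_{1},t_{2}])}.
\]
A direct arithmetic check from the definitions of $a$, $b$, $r$ yields the identities $pb'=a$ and $pr'=r$. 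Combined with H\"older in space and in time, the first term on the right equals $\|u\|_{L^{a}_{t}L^{r}_{x}([t_{1},t_{2}])}^{p}\leq \eta^{p}$, while the second is controlled by $\|u\|_{L^{a}_{t}L^{r}_{x}([t_{1},t_{2}])}^{p-1}\|\partial_{x}u\|_{L^{a}_{t}L^{r}_{x}(\R)}\lesssim \eta^{p-1}$. Both quantities are $o(1)$ as $\eta\to 0$, verifying the Cauchy criterion and producing $\psi^{+}\in H^{1}(\R)$ with $u(t)\to e^{-itH_{\gamma}}\psi^{+}$ in $H^{1}$; the case $t\to-\infty$ is symmetric by time-reversal.

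The main technical obstacle will be the very first step, namely promoting the bare $L^{a}_{t}L^{r}_{x}$ assumption to a derivative-level Strichartz bound in the presence of the singular $\delta$-interaction. Because $\partial_{x}$ does not commute with $H_{\gamma}$, the continuity argument must be run with $\langle H_{\gamma}\rangle^{1/2}$ in place of $\partial_{x}$, and this forces one to use a chain-rule-type estimate for $\langle H_{\gamma}\rangle^{1/2}(|u|^{p-1}u)$. Such estimates are available in the 1D $\delta$-setting exactly as in \cite{BaniVisci2016}, from which the proposition is quoted.
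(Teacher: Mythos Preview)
The paper does not give its own proof of this proposition; it simply quotes \cite[Proposition~3.1]{BaniVisci2016}. Your outline is the standard Duhamel/Cauchy-criterion argument that underlies that reference, and it is correct: the exponent identities $pb'=a$ and $pr'=r$ check out, the partition-and-bootstrap step to upgrade $L^{a}_{t}L^{r}_{x}$ to an $H^{1}$-level Strichartz bound is the usual one, and you have correctly isolated the only genuine technicality---replacing $\partial_{x}$ by $\langle H_{\gamma}\rangle^{1/2}$ via the norm equivalence $\|f\|_{H^{1}}\sim\|\langle H_{\gamma}\rangle^{1/2}f\|_{L^{2}}$ so as to commute with $e^{-itH_{\gamma}}$, together with the associated product/chain-rule estimate---which is exactly what \cite{BaniVisci2016} provides.
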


We will recall the linear profile decomposition, which is important to study the scattering properties of solutions to
\eqref{NLS}; see \cite[Section 2.2]{BaniVisci2016}. 

\begin{proposition}[Linear profile decomposition] 
\label{LPD}
Let $\{ \varphi_n\}_{n\in \N}$ be a bounded sequence in $H^1(\mathbb{R})$. Then, up to subsequence, we can write 
\begin{align*}
	\varphi_n=\sum_{j=1}^{J} e^{i t_{n}^j H_\gamma} \tau_{x_n^j} \psi^j +R_n^J, \quad \forall J \in \N, 
\end{align*}
where 
$t_n^j\in \mathbb{R}$, $x_n^j \in \mathbb{R}$, $\psi^j \in H^1(\mathbb{R})\setminus\{0\}$, and the following hold. 
\begin{itemize}
\item for any fixed $j$, we have :
\begin{align*}
&\text{either } t_n^j=0 \text{ for any } n \in \N, \text{ or } t_n^j \to \pm \infty \text{ as } n\to \infty,
\\
&\text{either } x_n^j=0 \text{ for any } n \in \N, \text{ or } x_n^j \to \pm \infty \text{ as } n\to \infty.
\end{align*}
\item orthogonality of the parameters:
\[ |t_n^j -t_n^k|+|x_n^j-x_n^k| \to \infty \text{ as } n \to \infty, \quad \forall j\neq k. \]
\item smallness of the reminder:
\[ \forall \eps >0, \exists J=J(\eps) \in \N \text{ such that } \limsup_{n\to \infty} \| e^{-itH_\gamma}R_n^J\|_{L_{t,x}^\infty} <\eps. \]
\item orthogonality in norms: for any $ J\in \N$
\begin{align*} 
\| \varphi_n\|_{L^2}^2 &=\sum_{j=1}^J \|\psi^j\|_{L^2}^2 +\| R_n^J\|_{L^2}^2 +o_n(1),
\\
\| \varphi_n\|_{H}^2 &=\sum_{j=1}^J \| \tau_{x_n^j} \psi^j \|_{H}^2 +\| R_n^J\|_{H}^2 +o_n(1).
\end{align*}
Moreover, we have 
\[ \| \varphi_n\|_{L^q}^q =\sum_{j=1}^J \| e^{i t_n^j H_\gamma} \tau_{x_n^j} \psi^j \|_{L^q}^q +\| R_n^J\|_{L^q}^q +o_n(1), \quad q\in (2,\infty),   \quad \forall J\in \N. \]
\end{itemize}
\end{proposition}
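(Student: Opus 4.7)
The plan is to follow the standard Bahouri--G\'erard/Keraani iterative extraction scheme, adapted to the operator $H_{\gamma}$. The core ingredient is an \emph{inverse Strichartz inequality}: if $\{\varphi_{n}\}$ is bounded in $H^{1}(\R)$ and
\[
\limsup_{n\to\infty} \|e^{-itH_{\gamma}}\varphi_{n}\|_{L^{\infty}_{t,x}} \geq \eps > 0,
\]
then there exist parameters $t_{n}\in\R$, $y_{n}\in\R$ and a non-trivial profile $\psi\in H^{1}(\R)$ such that, along a subsequence,
\[
\tau_{-y_{n}}e^{-it_{n}H_{\gamma}}\varphi_{n} \rightharpoonup \psi \text{ weakly in } H^{1}(\R),
\qquad \|\psi\|_{H^{1}} \gtrsim \eps^{\theta}\|\varphi_{n}\|_{H^{1}}^{1-\theta}
\]
for some $\theta=\theta(p)>0$. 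To prove this, I would pick $(t_{n},y_{n})$ with $|e^{-it_{n}H_{\gamma}}\varphi_{n}(y_{n})|\gtrsim\eps$, exploit the dispersive bound $\|e^{-itH_{\gamma}}\|_{L^{1}\to L^{\infty}}\lesssim |t|^{-1/2}$ together with the $H^{1}$--Sobolev embedding on bounded intervals to pass to the weak limit, and then split into the cases $\{y_{n}\}$ bounded (in which case $y_{n}$ may be absorbed and one takes $x_{n}=0$) and $|y_{n}|\to\infty$ (in which case one sets $x_{n}=y_{n}$).

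Once the inverse Strichartz is in hand, I would iterate it. Define $A_{1}=\limsup_{n}\|e^{-itH_{\gamma}}\varphi_{n}\|_{L^{\infty}_{t,x}}$. If $A_{1}=0$, take $J=0$ and $R^{0}_{n}=\varphi_{n}$. Otherwise extract $(t_{n}^{1},x_{n}^{1},\psi^{1})$ as above, set
\[
R_{n}^{1}:= \varphi_{n} - e^{it_{n}^{1}H_{\gamma}}\tau_{x_{n}^{1}}\psi^{1},
\]
and repeat the procedure with $R_{n}^{1}$ in place of $\varphi_{n}$. At each step the orthogonality of parameters $|t_{n}^{j}-t_{n}^{k}|+|x_{n}^{j}-x_{n}^{k}|\to\infty$ is forced by construction: if it failed for some $j\neq k$, one could pass to a common frame and contradict the weak convergence $R_{n}^{k-1}\rightharpoonup 0$ used to extract $\psi^{k}$. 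The standard dichotomy reductions (replacing $t_{n}^{j}$ by $0$ if it has a bounded subsequence by absorbing the limiting phase into $\psi^{j}$, and similarly for $x_{n}^{j}$) produce the asserted form of the parameters.

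For the asymptotic Pythagorean expansions, the $L^{2}$ and $H$-norm orthogonalities follow from the weak convergence combined with the facts that $e^{-itH_{\gamma}}$ is unitary on $L^{2}$ and preserves the $H$-inner product (via the form $\mathfrak{t}_{\gamma}$). In particular
\[
\sum_{j=1}^{J}\|\psi^{j}\|_{H^{1}}^{2} \leq \limsup_{n}\|\varphi_{n}\|_{H^{1}}^{2},
\]
so $\|\psi^{j}\|_{H^{1}}\to 0$ as $j\to\infty$. Combined with the quantitative lower bound from the inverse Strichartz, this yields
\[
\limsup_{n\to\infty} \|e^{-itH_{\gamma}}R_{n}^{J}\|_{L^{\infty}_{t,x}} \longrightarrow 0 \quad \text{as } J\to\infty,
\]
which gives the smallness of the remainder. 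The $L^{q}$-orthogonality for $q\in(2,\infty)$ is a refined Brezis--Lieb type argument: it suffices to show that the cross terms between two distinct profiles vanish, which follows from the parameter orthogonality---dispersive decay handles $|t_{n}^{j}-t_{n}^{k}|\to\infty$, while approximation of profiles by compactly supported functions handles $|x_{n}^{j}-x_{n}^{k}|\to\infty$.

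The main technical obstacle is handling the space translations in the presence of the potential: the operator $e^{-itH_{\gamma}}$ does not commute with $\tau_{x}$, so whenever $|x_{n}^{j}|\to\infty$ one must show that $e^{-itH_{\gamma}}\tau_{x_{n}^{j}}\psi^{j}$ is well approximated (in the Strichartz and $L^{q}$ spaces involved) by the translated free evolution $\tau_{x_{n}^{j}}e^{it\partial_{x}^{2}}\psi^{j}$. Because $-\gamma\delta(x)$ is localized at the origin, this is accomplished by a Duhamel-type argument: the difference satisfies an equation whose forcing is supported near $0$, and the dispersive decay of the free group moves this contribution to zero as $|x_{n}^{j}|\to\infty$. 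This approximation is also what reconciles the two types of profiles (sitting at the origin vs.\ escaping to infinity) in all the orthogonality identities.
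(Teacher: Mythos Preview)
The paper does not give its own proof of this proposition; it simply cites \cite[Section~2.2]{BaniVisci2016} and states the result. Your sketch follows exactly the standard Bahouri--G\'erard/Keraani scheme used there, including the key point specific to this setting (the non-commutation of $\tau_{x}$ with $e^{-itH_{\gamma}}$ and the approximation by the free evolution when $|x_{n}^{j}|\to\infty$), so your approach is correct and coincides with the cited one.
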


\begin{remark}
\label{rmk1}
Since it follows from the H\"{o}lder inequalities that 
\[\|f\|_{L_t^a L_x^r} \leq \|f\|_{L_t^qL_x^r}^{\theta} \| f\|_{L_t^{\infty}L_x^r}^{1-\theta} \leq \|f\|_{L_t^qL_x^r}^{\theta} \| f\|_{L_t^{\infty}L_x^2}^{(1-\theta)\eta} \| f\|_{L_{t,x}^{\infty}}^{(1-\theta)(1-\eta)},\] 
where $q=\frac{4(p+1)}{p-1}$, $\theta =\frac{2(p+3)}{(p-1)^2}$, and $\eta=\frac{2}{p+1}$, the smallness of the remainder also holds for $L_t^a L_x^r$-norm by the Strichartz estimate. 
\end{remark}

We also will need the following perturbation result; see \cite[Proposition 3.2.]{BaniVisci2016} for more details.

\begin{lemma}[Long time perturbation]
\label{perturb}
For any $M>0$, there exist $\varepsilon=\varepsilon(M)>0$ and a positive constant $C=C(M)$ such that the following occurs. 
Let $v: I\times \R\to \C$ be a solution of the integral equation with source term $e$:
\[ v(t)=e^{-itH_{\gamma}}\varphi +i\int_{0}^{t} e^{-i(t-s)H_\gamma} (|v(s)|^{p-1}v(s))ds +e (t)\]
with $\|v\|_{L_t^a L_x^r(I\times \R)}<M$ and $\|e\|_{L_t^{a}L_x^r(I\times \R)}<\varepsilon$. Assume moreover that $u_0 \in H^1(\mathbb{R})$ is such that $\|u_0-\varphi\|_{H^{1}} < \varepsilon$, then the solution $u: I\times \R\to \C$ to \eqref{NLS} with initial data $u_0$:
\[ u(t)=e^{-itH_{\gamma}}u_{0} +i \int_{0}^{t} e^{-i(t-s)H_{\gamma}}(|u(s)|^{p-1}u(s)) ds,\]
satisfies $u \in L_t^{a}L_x^r(I\times \R)$ and moreover $\|u-v\|_{L_t^{a}L_x^r(I\times \R)}<C\varepsilon$. 
\end{lemma}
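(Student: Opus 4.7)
The plan is to follow the standard long-time perturbation scheme, iterated over a finite partition of $I$ whose length depends only on $M$, using the Strichartz pair $(a,r)$ together with the dual inhomogeneous pair $(b',r')$ coming from the linear estimates of this section.

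First, I would reduce to the case $t\in I\cap[0,\infty)$ (the past is symmetric). Since $\|v\|_{L^a_t L^r_x(I\times\R)}<M$, I can choose a threshold $\delta=\delta(M)>0$, to be fixed later, and split $I$ into $N=N(M,\delta)\lesssim (M/\delta)^a$ consecutive subintervals $I_j=[t_j,t_{j+1}]$ such that $\|v\|_{L^a_t L^r_x(I_j\times\R)}\le \delta$ on each piece. Setting $w:=u-v$, I write on each $I_j$
\[
w(t)=e^{-i(t-t_j)H_\gamma}w(t_j)+i\int_{t_j}^{t}e^{-i(t-s)H_\gamma}\bigl(|u|^{p-1}u-|v|^{p-1}v\bigr)\,ds+\bigl(e(t_j)-e(t)\bigr).
\]

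Second, I would estimate the nonlinear difference via the inhomogeneous Strichartz estimate $\|\int_0^t e^{-i(t-s)H_\gamma}F\,ds\|_{L^a_t L^r_x}\lesssim\|F\|_{L^{b'}_t L^{r'}_x}$ combined with the pointwise bound $||u|^{p-1}u-|v|^{p-1}v|\lesssim(|w|^{p-1}+|v|^{p-1})|w|$ and Hölder in space and time, chosen so that the product of $p-1$ factors of $v$ (or $w$) and one factor of $w$ in $L^{b'}_t L^{r'}_x$ is controlled by $\|v\|_{L^a_t L^r_x}^{p-1}\|w\|_{L^a_t L^r_x}$ and $\|w\|_{L^a_t L^r_x}^{p}$ respectively. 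Writing $A_j:=\|w\|_{L^a_t L^r_x(I_j\times\R)}$, this yields
\[
A_j\le C_0\|w(t_j)\|_{H^1}+C_0\|e\|_{L^a_t L^r_x(I_j\times\R)}+C_0\bigl(\delta^{p-1}A_j+A_j^{p}\bigr).
\]
Choosing $\delta$ so that $C_0\delta^{p-1}\le \tfrac12$ and running a standard continuity/bootstrap argument (valid as long as $A_j$ is small enough that the $A_j^{p}$ term is negligible), I obtain $A_j\le 2C_0\bigl(\|w(t_j)\|_{H^1}+\|e\|_{L^a_t L^r_x(I_j\times\R)}\bigr)$, provided the initial error $\|w(t_j)\|_{H^1}$ on that subinterval is small.

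Third, I would propagate the $H^1$-control of $w(t_j)$ by applying the $H^1$ Strichartz estimate to the integral equation for $w$ on $I_j$ (differentiating in $x$ and treating the nonlinearity and $e$ by the same Hölder splitting) to get
\[
\|w(t_{j+1})\|_{H^{1}}\le C_1\|w(t_j)\|_{H^{1}}+C_1\|e\|_{L^a_t L^r_x(I_j\times\R)}.
\]
Iterating this over the $N$ subintervals gives a bound of the form $\|w(t_j)\|_{H^1}+A_j\le C_1^{j}\bigl(\|u_0-\varphi\|_{H^1}+\|e\|_{L^a_t L^r_x(I\times\R)}\bigr)$. Summing in $j$ and choosing $\varepsilon=\varepsilon(M)$ so small that $C_1^{N(M)}\varepsilon$ stays inside the smallness window required by the bootstrap at every step, I conclude $\|u-v\|_{L^a_t L^r_x(I\times\R)}\le C(M)\varepsilon$ and, in particular, $u\in L^a_t L^r_x(I\times\R)$.

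The main obstacle I anticipate is bookkeeping the exponents in the Hölder splitting of $|u|^{p-1}u-|v|^{p-1}v$ into $L^{b'}_t L^{r'}_x$: the exponent $b$ is tuned precisely so that $\|\,|v|^{p-1}w\,\|_{L^{b'}_t L^{r'}_x}$ factors as $\|v\|_{L^a_t L^r_x}^{p-1}\|w\|_{L^a_t L^r_x}$ (possibly up to an $L^{p-1}_t L^\infty_x$ auxiliary norm, which is also controlled by the Strichartz estimates stated above), and checking that this is compatible with the induction on $H^1$-norms is the delicate bookkeeping step. The remaining ingredients are purely mechanical given the Strichartz estimates already stated.
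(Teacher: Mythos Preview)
The paper does not give its own proof of this lemma; it simply refers to \cite[Proposition~3.2]{BaniVisci2016}. Your outline is the standard long-time perturbation scheme used there, and Steps~1--2 are correct: the exponents are tuned so that $\tfrac{1}{r'}=\tfrac{p-1}{r}+\tfrac{1}{r}$ and $\tfrac{1}{b'}=\tfrac{p}{a}$, whence $\bigl\||v|^{p-1}w\bigr\|_{L^{b'}_t L^{r'}_x}\le \|v\|_{L^a_t L^r_x}^{p-1}\|w\|_{L^a_t L^r_x}$ by H\"older, and the bootstrap on each subinterval closes exactly as you describe.

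Step~3, however, does not work as written. You want to propagate $\|w(t_j)\|_{H^1}$ and assert $\|w(t_{j+1})\|_{H^1}\le C_1\|w(t_j)\|_{H^1}+C_1\|e\|_{L^a_t L^r_x(I_j)}$, but the only hypothesis on $e$ is $e\in L^a_t L^r_x$; nothing places $e(t_j)$ or $e(t_{j+1})$ in $H^1$ (indeed $e$ need not be defined pointwise in $t$ as an $H^1$-valued function), and there is no Strichartz estimate taking $L^a_tL^r_x$ data back into $H^1$. From the correct integral formula on $I_j$,
\[
w(t)=e^{-i(t-t_j)H_\gamma}w(t_j)+i\int_{t_j}^{t}e^{-i(t-s)H_\gamma}F(s)\,ds+e^{-i(t-t_j)H_\gamma}e(t_j)-e(t)
\]
(your display drops the propagator on $e(t_j)$), the last two terms obstruct any $H^1$ bound on $w(t_{j+1})$. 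The remedy is to iterate purely in Strichartz norms: set $\tilde w:=w+e$, which obeys the clean Duhamel formula $\tilde w(t)=e^{-itH_\gamma}(u_0-\varphi)+i\int_0^t e^{-i(t-s)H_\gamma}F(s)\,ds$ with no source, and propagate the quantities $\|e^{-i(\cdot-t_j)H_\gamma}\tilde w(t_j)\|_{L^a_t L^r_x([t_j,\sup I))}$ via the inhomogeneous Strichartz estimate on disjoint time intervals (which follows from the stated one by extending $F$ by zero). Since $\|w-\tilde w\|_{L^a_tL^r_x}=\|e\|_{L^a_tL^r_x}<\varepsilon$, this yields the desired bound on $w$, and with this correction the rest of your summation and smallness argument goes through unchanged.
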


\subsection{Varational analysis}
It is well-known that the ground state $Q$ satisfies the Pohozaev's identities
\begin{align}\label{PIh}
\int_{\mathbb{R}}|\partial_{x} Q|^{2}dx&=\frac{p-1}{2(p+1)}\int_{\mathbb{R}}Q^{p+1}\,dx,\\\label{PIh2}
\int_{\mathbb{R}}| Q|^{p+1}dx&=\frac{2(p+1)}{p+3}\int_{\mathbb{R}}Q^{2}\,dx.
\end{align}
We have the following sharp  Gagliardo-Nirenberg inequality, 
\begin{equation}\label{GI}
\|f\|^{p+1}_{L^{p+1}}\leq C_{GN}\|\partial_{x} f\|^{\frac{(p-1)}{2}}_{L^{2}}\|f\|^{\frac{(p+3)}{2}}_{L^{2}},
\end{equation}
where 
\begin{equation}
\label{C_GN}
C_{GN}=\left(\frac{p-1}{p+3}\right)^{\frac{4-(p-1)}{4}}\frac{2(p+1)}{(p-1)\|Q\|^{p-1}_{L^{2}}}.
\end{equation}

By using the Pohozaev's identities, it is easy to derive that
\begin{align}\label{ELQ}
E_{0}(Q)=\frac{p-5}{2(p+3)}\| Q\|^{2}_{L^{2}}=\frac{p-5}{2(p-1)}\|\partial_{x} Q\|^{2}_{L^{2}}
=\frac{p-5}{4(p+1)}\|Q\|^{p+1}_{L^{p+1}}.
\end{align}

In particular, $E_{0}(Q)>0$ (recall that $p>5$).  

The scattering threshold is related with the following minimization problem:
\begin{equation}\label{dinf}
d_{\gamma}:=\inf\left\{S_{\gamma}(\varphi): \varphi\in H^{1}(\R)\setminus\left\{{0}\right\}, P_{\gamma}(\varphi)=0 \right\},
\end{equation}
where
\begin{align}\label{FS}
S_{\gamma}(u)
&=\frac{1}{2}\|\partial_{x}u\|^{2}_{L^{2}}+\frac{1}{2}\|u\|^{2}_{L^2}-\frac{\gamma}{2}|u(0)|^{2}-\frac{1}{p+1}\|u\|_{L^{p+1}}^{p+1}.
\end{align}
The functional $S_{\gamma}$ is often called action. Proposition 1.2 of \cite{MT2} gives the following result.
\begin{lemma}\label{minimumS}
Assume $\gamma<0$. Then $d_{\gamma}=S_{0}(Q)$;  but the infimum  \eqref{dinf} is never attained.
\end{lemma}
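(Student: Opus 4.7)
The plan is to establish the two inequalities $d_{\gamma}\leq S_{0}(Q)$ and $d_{\gamma}\geq S_{0}(Q)$ separately, and then rule out attainment by showing that any minimizer would have to be a translate of $Q$, which is incompatible with $\gamma<0$.

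For the upper bound $d_{\gamma}\leq S_{0}(Q)$, I would construct an explicit minimizing sequence by translating and mildly rescaling the ground state. Set $\varphi_{y}(x)=\lambda(y)\, Q(x-y)$, where $\lambda(y)>0$ is chosen so that $P_{\gamma}(\varphi_{y})=0$. Using \eqref{PIh} one computes
\[
\lambda(y)^{p-1}=1+\frac{(p+1)|\gamma|}{p-1}\,\frac{Q(y)^{2}}{\|Q\|^{p+1}_{L^{p+1}}},
\]
so $\lambda(y)\to 1$ as $y\to\infty$ because $Q$ decays exponentially. Substituting into $S_{\gamma}$ and using the exponential decay of $Q(y)$ gives $S_{\gamma}(\varphi_{y})\to S_{0}(Q)$ as $y\to\infty$, hence $d_{\gamma}\leq S_{0}(Q)$.

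For the lower bound $d_{\gamma}\geq S_{0}(Q)$, let $\varphi\in H^{1}(\R)\setminus\{0\}$ satisfy $P_{\gamma}(\varphi)=0$. Since $\gamma<0$, we have
\[
P_{0}(\varphi)=P_{\gamma}(\varphi)+\frac{\gamma}{2}|\varphi(0)|^{2}=\frac{\gamma}{2}|\varphi(0)|^{2}\leq 0.
\]
I would next apply the $\dot H^{1/2}$-preserving (really $L^{2}$-preserving) scaling $\varphi^{\lambda}(x)=\lambda^{1/2}\varphi(\lambda x)$. A direct computation shows that
\[
P_{0}(\varphi^{\lambda})=\lambda^{2}\|\partial_{x}\varphi\|_{L^{2}}^{2}-\lambda^{(p-1)/2}\frac{p-1}{2(p+1)}\|\varphi\|_{L^{p+1}}^{p+1},
\]
and since $P_{0}(\varphi)\leq 0$ with $p>5$ there exists a unique $\lambda_{0}\in(0,1]$ with $P_{0}(\varphi^{\lambda_{0}})=0$, with $\lambda_{0}=1$ iff $\varphi(0)=0$. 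Using the identity $S_{\gamma}-\tfrac{2}{p-1}P_{\gamma}=\tfrac{p-5}{2(p-1)}\|\partial_{x}\varphi\|_{L^{2}}^{2}+\tfrac{1}{2}\|\varphi\|_{L^{2}}^{2}-\tfrac{\gamma(p-3)}{2(p-1)}|\varphi(0)|^{2}$ and the analogous identity for $\gamma=0$, the constraint $P_{\gamma}(\varphi)=0=P_{0}(\varphi^{\lambda_{0}})$ yields
\[
S_{\gamma}(\varphi)-S_{0}(\varphi^{\lambda_{0}})=\frac{p-5}{2(p-1)}(1-\lambda_{0}^{2})\|\partial_{x}\varphi\|_{L^{2}}^{2}-\frac{\gamma(p-3)}{2(p-1)}|\varphi(0)|^{2}\geq 0.
\]
Combining with the classical free-NLS variational characterization $S_{0}(\varphi^{\lambda_{0}})\geq S_{0}(Q)$ (valid because $\varphi^{\lambda_{0}}\neq 0$ and $P_{0}(\varphi^{\lambda_{0}})=0$) yields $S_{\gamma}(\varphi)\geq S_{0}(Q)$.

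For non-attainment, if the infimum were attained by some $\varphi_{\ast}$, the chain of inequalities above would force $\lambda_{0}=1$ and $\varphi_{\ast}(0)=0$, together with $S_{0}(\varphi_{\ast})=S_{0}(Q)$ and $P_{0}(\varphi_{\ast})=0$. By the known characterization of minimizers for the free NLS, $\varphi_{\ast}$ would then be $e^{i\theta}Q(\cdot-y)$ for some $\theta,y\in\R$; but $Q>0$ everywhere, so $\varphi_{\ast}(0)\neq 0$, a contradiction. The main technical point I expect to have to be careful about is making sure that the two algebraic identities relating $S$, $P$ and $|\varphi(0)|^{2}$ line up exactly so that the boundary term has the right sign; the rest is a direct scaling argument combined with the well-known variational characterization of the free ground state.
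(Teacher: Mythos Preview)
Your argument is correct. The paper itself does not give a proof of this lemma; it simply cites Proposition~1.2 of \cite{MT2}. Your proposal supplies a complete, self-contained argument in place of that citation.

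A few comments on the details, all of which check out:
\begin{itemize}
\item The explicit formula for $\lambda(y)^{p-1}$ is correct once one uses \eqref{PIh} and $\gamma<0$, and the upper bound $d_{\gamma}\le S_{0}(Q)$ follows immediately from $Q(y)\to 0$.
\item For the lower bound, the $L^{2}$-preserving scaling $\varphi^{\lambda}(x)=\lambda^{1/2}\varphi(\lambda x)$ gives $P_{0}(\varphi^{\lambda})=\lambda^{2}\big(\|\partial_{x}\varphi\|_{L^{2}}^{2}-\lambda^{(p-5)/2}\tfrac{p-1}{2(p+1)}\|\varphi\|_{L^{p+1}}^{p+1}\big)$, so the unique positive root is $\lambda_{0}=(\|\partial_{x}\varphi\|_{L^{2}}^{2}\big/\tfrac{p-1}{2(p+1)}\|\varphi\|_{L^{p+1}}^{p+1})^{2/(p-5)}\le 1$, with equality exactly when $P_{0}(\varphi)=0$, i.e.\ when $\varphi(0)=0$.
\item Your key identity
\[
S_{\gamma}(\varphi)-S_{0}(\varphi^{\lambda_{0}})=\frac{p-5}{2(p-1)}(1-\lambda_{0}^{2})\|\partial_{x}\varphi\|_{L^{2}}^{2}-\frac{\gamma(p-3)}{2(p-1)}|\varphi(0)|^{2}
\]
is exactly right (I checked both sides via $S_{\gamma}-\tfrac{2}{p-1}P_{\gamma}$), and since $p>5$, $\gamma<0$, and $\lambda_{0}\le 1$, both terms on the right are nonnegative. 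Together with the free-NLS variational characterization $S_{0}(\psi)\ge S_{0}(Q)$ for any nontrivial $\psi$ with $P_{0}(\psi)=0$, this gives $d_{\gamma}\ge S_{0}(Q)$.
\item The non-attainment step is also clean: equality forces $\lambda_{0}=1$ and $\varphi_{\ast}(0)=0$, hence $P_{0}(\varphi_{\ast})=0$ with $S_{0}(\varphi_{\ast})=S_{0}(Q)$, so $\varphi_{\ast}=e^{i\theta}Q(\cdot-y)$ by the classical uniqueness; but then $|\varphi_{\ast}(0)|=Q(y)>0$, a contradiction.
\end{itemize}
The only cosmetic remark is that your parenthetical ``$\dot H^{1/2}$-preserving'' is a slip; the scaling you use is the $L^{2}$-invariant one, as you immediately note. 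Otherwise the plan is complete and can be executed as written.
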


We will need the following Lemma. We recall that
$\|u\|_{H}^2 = \|\partial_x u\|_{L^2}^2 -\gamma |u(0)|^2$ for $u\in H^{1}(\R)$.

\begin{lemma}
\label{variational}
Let $\varphi \in H^1(\mathbb{R})$. 
If $\|\varphi\|_{L^2}^{\sigma} \|\varphi\|_{H} \leq \|Q\|_{L^2}^{\sigma} \|\partial_x Q\|_{L^2}$,  then we have $P_{\gamma}(\varphi) \geq 0$.
Moreover, if $[M(\varphi)]^{\sigma} E_{\gamma}(\varphi) \leq [M(Q)]^{\sigma}E_{0}(Q)$ and $P_{\gamma}(\varphi) \geq 0$, then we have $\|\varphi\|_{L^2}^{\sigma} \|\varphi\|_{H} \leq \|Q\|_{L^2}^{\sigma} \|\partial_x Q\|_{L^2}$. 
\end{lemma}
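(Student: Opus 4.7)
The plan is to introduce the shorthand $X := \|\varphi\|_{L^2}^{\sigma}\|\varphi\|_H$ and $X_Q := \|Q\|_{L^2}^{\sigma}\|\partial_x Q\|_{L^2}$, and then reduce both implications to the single sharp identity connecting $E_\gamma$ and $P_\gamma$. Before starting I will record a preliminary calculation: combining the formula \eqref{C_GN} for $C_{GN}$ with the Pohozaev identities \eqref{PIh}, \eqref{PIh2} yields $\|\partial_x Q\|_{L^2}^2/\|Q\|_{L^2}^2 = (p-1)/(p+3)$, from which
\[
\tfrac{p-1}{2(p+1)}\,C_{GN} \;=\; X_Q^{-(p-5)/2},
\]
an identity I will use repeatedly. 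I also note that $E_0(Q) = \tfrac{p-5}{2(p-1)}\|\partial_x Q\|_{L^2}^2$ from \eqref{ELQ}, hence $[M(Q)]^{\sigma}E_0(Q) = \tfrac{p-5}{2(p-1)}X_Q^2$, and that $\gamma<0$ forces $\|\partial_x\varphi\|_{L^2}\le \|\varphi\|_H$.

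For the first implication, assume $X\le X_Q$. Applying the Gagliardo-Nirenberg inequality \eqref{GI} and rewriting the right-hand side as $\|\partial_x\varphi\|_{L^2}^2\cdot\bigl(\|\partial_x\varphi\|_{L^2}\|\varphi\|_{L^2}^{\sigma}\bigr)^{(p-5)/2}$, the preliminary identity gives
\[
\tfrac{p-1}{2(p+1)}\|\varphi\|_{L^{p+1}}^{p+1}\;\le\;\|\partial_x\varphi\|_{L^2}^{2}\left(\frac{\|\partial_x\varphi\|_{L^2}\|\varphi\|_{L^2}^{\sigma}}{X_Q}\right)^{(p-5)/2}.
\]
Since $\|\partial_x\varphi\|_{L^2}\|\varphi\|_{L^2}^{\sigma}\le X\le X_Q$ and $p>5$, the bracket is $\le 1$, so $\tfrac{p-1}{2(p+1)}\|\varphi\|_{L^{p+1}}^{p+1}\le \|\partial_x\varphi\|_{L^2}^2$. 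Substituting this into the definition of $P_\gamma$ yields $P_\gamma(\varphi)\ge -\tfrac{\gamma}{2}|\varphi(0)|^{2}\ge 0$, using $\gamma<0$.

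For the converse, I will exploit the key algebraic identity, obtained by a direct expansion,
\[
E_\gamma(\varphi) - \tfrac{2}{p-1}P_\gamma(\varphi) \;=\; \tfrac{p-5}{2(p-1)}\|\partial_x\varphi\|_{L^2}^{2} \;+\; \tfrac{p-3}{2(p-1)}(-\gamma)|\varphi(0)|^{2}.
\]
Since $(p-3)/(p-1) \ge (p-5)/(p-1) > 0$ and $-\gamma>0$, the right-hand side is at least $\tfrac{p-5}{2(p-1)}\bigl(\|\partial_x\varphi\|_{L^2}^{2} + (-\gamma)|\varphi(0)|^{2}\bigr) = \tfrac{p-5}{2(p-1)}\|\varphi\|_H^{2}$. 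Invoking $P_\gamma(\varphi)\ge 0$, this gives $E_\gamma(\varphi) \ge \tfrac{p-5}{2(p-1)}\|\varphi\|_H^{2}$. Multiplying by $[M(\varphi)]^{\sigma}$ and comparing with the threshold value $[M(Q)]^{\sigma}E_0(Q) = \tfrac{p-5}{2(p-1)}X_Q^{2}$ reduces the energy-mass hypothesis to $X^{2}\le X_Q^{2}$, i.e.\ $X\le X_Q$, as desired.

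The only part that requires any care is the algebraic identity for $E_\gamma - \tfrac{2}{p-1}P_\gamma$: the coefficients $\tfrac{p-5}{2(p-1)}$ and $\tfrac{p-3}{2(p-1)}$ must be checked term by term, and it is essential that both are positive (which is exactly the subcritical range $p>5$) so that $P_\gamma\ge 0$ can be discarded rather than needing to be controlled. The Pohozaev normalization $\tfrac{p-1}{2(p+1)}C_{GN}=X_Q^{-(p-5)/2}$ is the other bookkeeping step; together these two computations make both directions of the lemma follow by straightforward algebra.
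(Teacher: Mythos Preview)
Your proof is correct and follows essentially the same approach as the paper's: both parts rest on the Gagliardo--Nirenberg inequality combined with the Pohozaev normalization $\tfrac{p-1}{2(p+1)}C_{GN}\|Q\|_{L^2}^{(p+3)/2}\|\partial_x Q\|_{L^2}^{(p-5)/2}=1$ for the first implication, and on the algebraic identity for $E_\gamma(\varphi)-\tfrac{2}{p-1}P_\gamma(\varphi)$ (together with $-\gamma>0$ and $p>5$) for the second. Your shorthand $X, X_Q$ streamlines the bookkeeping but the argument is the same.
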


\begin{proof}
Assume $\|\varphi\|_{L^2}^{\sigma} \|\varphi\|_{H} \leq \|Q\|_{L^2}^{\sigma} \|\partial_x Q\|_{L^2}$. By the Gagliardo-Nirenberg inequality and the assumption, we have
\begin{align*}
	P_{\gamma} (\varphi)
	&\geq \|\partial_x \varphi\|_{L^2}^2 -\frac{\gamma}{2} |\varphi(0)|^2 -\frac{(p-1)C_{GN}}{2(p+1)} \|\varphi\|_{L^2}^{\frac{p+3}{2}} \|\partial_x \varphi \|_{L^2}^{\frac{p-1}{2}} 
	\\
	&=\|\partial_x \varphi\|_{L^2}^2 \left(1 - \frac{(p-1)C_{GN}}{2(p+1)} \|\varphi\|_{L^2}^{\frac{p+3}{2}} \|\partial_x \varphi \|_{L^2}^{\frac{p-5}{2}}  \right) -\frac{\gamma}{2} |\varphi(0)|^2
	\\
	&\geq \|\partial_x \varphi\|_{L^2}^2 \left(1 - \frac{(p-1)C_{GN}}{2(p+1)} \|Q\|_{L^2}^{\frac{p+3}{2}} \|\partial_x Q\|_{L^2}^{\frac{p-5}{2}}  \right) -\frac{\gamma}{2} |\varphi(0)|^2
\end{align*}
By \eqref{PIh}, \eqref{PIh2}, and \eqref{C_GN}, we get
\begin{align*}
	\frac{(p-1)C_{GN}}{2(p+1)} \|Q\|_{L^2}^{\frac{p+3}{2}} \|\partial_x Q \|_{L^2}^{\frac{p-5}{2}}  =1.
\end{align*}
Therefore, we have $P_{\gamma} (\varphi) \geq  -\frac{\gamma}{2} |\varphi(0)|^2 \geq 0$. 

Next, assume $P_{\gamma}(\varphi) \geq 0$. By \eqref{ELQ} and $M(Q)^{\sigma} E_{0}(Q) \geq M(\varphi)^{\sigma}E_{\gamma}(\varphi)$, we obtain
\begin{align}
\label{eq1}
	\frac{p-5}{2(p-1)} M(Q)^{\sigma} \|\partial_x Q\|_{L^2}^2 
	= M(Q)^{\sigma} E_{0}(Q) \geq M(\varphi)^{\sigma}E_{\gamma}(\varphi). 
\end{align}
By the assumption, we have
\begin{align}
\label{eq1.2}
	\notag
	E_{\gamma}(\varphi) \geq 
	E_{\gamma}(\varphi) - \frac{2}{p-1}P_{\gamma}(\varphi)
	&=\frac{p-5}{2(p-1)} (\|\partial_x \varphi\|_{L^2}^2-\gamma|\varphi(0)|^2) -\frac{\gamma}{p-1}|\varphi(0)|^2
	\\ 
	&\geq \frac{p-5}{2(p-1)}\|\varphi\|_{H}^2.
\end{align}
Therefore, combining this with \eqref{eq1}, we get
\begin{align*}
	\frac{p-5}{2(p-1)} M(Q)^{\sigma} \|\partial_x Q\|_{L^2}^2  
	\geq  \frac{p-5}{2(p-1)} M(\varphi)^{\sigma} \|\varphi\|_{H}^2.
\end{align*}
This completes the proof. 
\end{proof}

\begin{lemma}\label{GlobalS}
Assume $\gamma<0$. If $u_{0}\in H^{1}(\R)$ satisfies
\begin{equation}\label{Condition11}
E_{\gamma}(u_{0})=E_{0}(Q), \quad M(u_{0})=M(Q)
\quad \text{and}\quad 
P_{\gamma}(u_{0})\geq 0,
\end{equation}
then the corresponding solution $u(t)$ to \eqref{NLS} is global in time and satisfies 
\begin{equation}\label{PositiveP}
P_{\gamma}(u(t))> 0 \quad \text{for all $t\in \R$}.
\end{equation}
Moreover,  we also have 
\begin{equation}\label{Gradient}
\|u(t)\|^{2}_{H}<\|\partial_{x}Q\|^{2}_{L_{x}^{2}}
\quad \text{for all $t\in \R$}.
\end{equation}
In particular, we have
\begin{equation}\label{EquiNorm}
\sup_{t\in \R}[\|u(t)\|^{2}_{H}+\|u\|^{2}_{L^{2}_{x}}]
\sim
 S_{0}(Q).
\end{equation}

\end{lemma}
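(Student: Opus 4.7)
The plan is to use the non-attainment statement of Lemma \ref{minimumS} to show $P_\gamma(u(t))$ stays strictly positive on the whole maximal interval of existence, then bootstrap this using the pointwise identity behind the proof of Lemma \ref{variational} to obtain the strict $H$-norm bound \eqref{Gradient}, and finally deduce global existence and the norm equivalence \eqref{EquiNorm} from mass conservation together with the Pohozaev relations \eqref{PIh}--\eqref{PIh2}.

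Let $u(t)$ be the maximal solution defined on $I \ni 0$. Conservation of mass and energy implies conservation of the action, so
\[
S_\gamma(u(t)) = E_\gamma(u(t)) + \tfrac{1}{2} M(u(t)) = E_0(Q) + \tfrac{1}{2} M(Q) = S_0(Q) = d_\gamma
\]
for every $t \in I$, where the last equality is Lemma \ref{minimumS}. Mass conservation also gives $u(t) \not\equiv 0$. If $P_\gamma(u(t_\ast)) = 0$ for some $t_\ast \in I$, then $u(t_\ast)$ would be an admissible competitor in \eqref{dinf} realizing the infimum $d_\gamma$, contradicting the non-attainment part of Lemma \ref{minimumS}. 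Applied at $t = 0$, this upgrades $P_\gamma(u_0) \geq 0$ to $P_\gamma(u_0) > 0$; continuity of $t \mapsto P_\gamma(u(t))$, which follows from $u \in C(I, H^1(\R))$ together with $H^1(\R) \hookrightarrow L^\infty(\R)$ for the boundary term $|u(t,0)|^2$, then propagates strict positivity throughout $I$, establishing \eqref{PositiveP}.

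Next, a direct algebraic manipulation using the definitions of $E_\gamma$ and $P_\gamma$ gives the pointwise identity
\[
E_\gamma(\varphi) - \tfrac{2}{p-1} P_\gamma(\varphi) = \tfrac{p-5}{2(p-1)} \|\varphi\|_H^2 - \tfrac{\gamma}{p-1} |\varphi(0)|^2,
\]
which is the sharpened form of the bound used in the proof of Lemma \ref{variational}. Applied to $\varphi = u(t)$ and combined with $E_\gamma(u(t)) = E_0(Q) = \tfrac{p-5}{2(p-1)} \|\partial_x Q\|_{L^2}^2$ (from \eqref{ELQ}), it rearranges to
\[
\tfrac{p-5}{2(p-1)} \bigl( \|\partial_x Q\|_{L^2}^2 - \|u(t)\|_H^2 \bigr) = \tfrac{2}{p-1} P_\gamma(u(t)) - \tfrac{\gamma}{p-1} |u(t,0)|^2 > 0,
\]
since $\gamma < 0$ and $P_\gamma(u(t)) > 0$. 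This is exactly \eqref{Gradient}. Because $\gamma < 0$ forces $\|\partial_x u(t)\|_{L^2}^2 \leq \|u(t)\|_H^2$, combining \eqref{Gradient} with mass conservation produces a uniform $H^1(\R)$ bound on $u(t)$, so the blow-up alternative from the local theory forces $I = \R$.

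For the equivalence \eqref{EquiNorm}, the upper bound $\|u(t)\|_H^2 + \|u(t)\|_{L^2}^2 \leq \|\partial_x Q\|_{L^2}^2 + \|Q\|_{L^2}^2$ follows from \eqref{Gradient} and $\|u(t)\|_{L^2}^2 = \|Q\|_{L^2}^2$, while the trivial lower bound $\|Q\|_{L^2}^2 > 0$ comes from mass conservation. Using \eqref{ELQ} to identify both $\|\partial_x Q\|_{L^2}^2$ and $\|Q\|_{L^2}^2$ as positive multiples of $E_0(Q)$, and hence of $S_0(Q) = E_0(Q) + \tfrac{1}{2} M(Q)$, then yields \eqref{EquiNorm}. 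I expect the main obstacle to be the first step: upgrading $P_\gamma(u_0) \geq 0$ to strict positivity at all times at the mass-energy threshold, where the usual variational open-set argument is unavailable and one must rely crucially on the non-attainment of $d_\gamma$ supplied by Lemma \ref{minimumS}.
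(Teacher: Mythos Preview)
Your proof is correct and follows essentially the same approach as the paper: the non-attainment in Lemma~\ref{minimumS} is used to rule out $P_\gamma(u(t))=0$, the identity $E_\gamma-\tfrac{2}{p-1}P_\gamma=\tfrac{p-5}{2(p-1)}\|\cdot\|_H^2-\tfrac{\gamma}{p-1}|\cdot(0)|^2$ from the proof of Lemma~\ref{variational} is combined with $E_\gamma(u)=E_0(Q)=\tfrac{p-5}{2(p-1)}\|\partial_xQ\|_{L^2}^2$ to obtain \eqref{Gradient}, and \eqref{EquiNorm} then follows from mass conservation and \eqref{ELQ}. Your write-up is in fact somewhat more explicit than the paper's (you spell out the continuity argument for propagating $P_\gamma>0$ and the blow-up alternative for global existence), but the logical structure is identical.
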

\begin{proof}
First, suppose by contradiction that there exists $t_{0}>0$ such that $P_{\gamma}(u(t_{0}))=0$. From \eqref{Condition11}
we have $S_{\gamma}(u(t_{0}))=S_{0}(Q)$, where $S_{\gamma}$ is given by \eqref{FS}. Thus, $u(t_{0})$ is a minimizer of the variational problem \eqref{dinf}, 
which is a contradiction with Lemma \ref{minimumS}. Therefore 
\begin{equation}\label{PPositive}
\text{$P_{\gamma}(u(t))> 0$ for all $t$ in the existence time.}
\end{equation}
The proof of Lemma \ref{variational} and $P_{\gamma}(u(t))>0$ imply $\|u(t)\|_{H}^2 < \|Q\|_{L^2}^2$. From them, we also have 
\[
S_{0}(Q)=S_{\gamma}(u(t))
\gtrsim\|\partial_{x}u(t)\|^{2}_{L_{x}^{2}}+\|u(t)\|^{2}_{L^{2}_{x}}-\gamma|u(t, 0)|^{2}.
\] 
The inverse is trivial. We completes the proof.
\end{proof}

\subsection{Localized Virial identity}
Given $R>1$, we define
\[
w_{R}(x)=R^{2}\phi\(\frac{x}{R}\)
\quad \text{and}\quad w_{\infty}(x)=x^{2},
\]
where $\phi$ is a real-valued and radial function such that
\[
\phi(x)=
\begin{cases}
x^{2},& \quad |x|\leq 1\\
0,& \quad |x|\geq 2,
\end{cases}
\quad \text{with}\quad 
|\partial_{x}^{\alpha}\phi(x)|\lesssim |x|^{2-\alpha}.
\]
Moreover, we define the functional
\[
V_{R}[u]=2\IM\int_{\R}\partial_{x}[w_{R}(x)]\overline{u(t,x)}\partial_{x}u(t,x)dx
\]

In \cite[Section 3]{MT2} (see also \cite[Lemma 4.1]{BaniVisci2016}), the authors proved the following.

\begin{lemma}\label{VirialIden}
Let $R\in [1, \infty]$. Assume that $u(t)$ solves \eqref{NLS}. Then we have
\begin{equation}\label{LocalVirial}
\frac{d}{d t}V_{R}[u]=I_{R, \gamma}[u(t)],
\end{equation}
where
\begin{align*}
I_{R, \gamma}[u]&:=4\int_{\R}\partial^{2}_{x}[w_{R}(x)]|\partial_{x}u(t,x)|^{2}-
2\frac{(p-1)}{p+1}\partial^{2}_{x}[w_{R}(x)]|u(t,x)|^{p+1}dx\\
&-\int_{\R}\partial^{4}_{x}[w_{R}(x)]|u(t,x)|^{2}dx
-2\gamma\left.\partial^{2}_{x}[w_{R}(x)]\right|_{x=0}|u(t,0)|^{2}\\
&-2\gamma\RE\left\{  \left.\partial_{x}[w_{R}(x)]\right|_{x=0}u(t,0)\overline{\partial_{x}u(t,0)} \right\}\\
&=I_{R,0}[u]-4\gamma|u(t,0)|^{2}.
\end{align*}
In particular, if $R=\infty$ we have $I_{\infty, \gamma}[u]=8P_{\gamma}(u)$.
\end{lemma}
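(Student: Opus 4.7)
The strategy is a direct computation: differentiate $V_R[u(t)]$ under the integral, substitute $\partial_t u$ from \eqref{NLS}, and integrate by parts. The only genuine subtlety comes from the jump of $\partial_x u$ at the origin forced by the delta potential.

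Setting $a=\partial_x w_R$, I would begin from
\[
\frac{d}{dt}V_R[u]
=2\IM\int_\R a(x)\bigl[\overline{\partial_t u}\,\partial_x u+\overline{u}\,\partial_x\partial_t u\bigr]\,dx,
\]
use the fact that $\partial_t u=i\partial_x^2 u+i|u|^{p-1}u$ pointwise on $\R\setminus\{0\}$, and split each integral as $\int_{-\infty}^0+\int_0^\infty$, where $u(t,\cdot)$ lies in $H^2$ on each half-line. Applying integration by parts on each half-line produces the standard bulk contributions
\[
4\int_\R \partial_x^2 w_R\,|\partial_x u|^2\,dx-\int_\R \partial_x^4 w_R\,|u|^2\,dx-\frac{2(p-1)}{p+1}\int_\R \partial_x^2 w_R\,|u|^{p+1}\,dx,
\]
together with one-sided boundary terms at $x=0^\pm$ involving $a(0)$, $\partial_x^2 w_R(0)$ and the traces $u(0^\pm)$, $\partial_x u(0^\pm)$.

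Using the continuity of $u$ at the origin together with the transmission condition $\partial_x u(0+)-\partial_x u(0-)=-\gamma u(0)$, these boundary contributions collapse into exactly
\[
-2\gamma\,\partial_x^2 w_R(0)\,|u(0)|^2-2\gamma\RE\bigl\{\partial_x w_R(0)\,u(0)\,\overline{\partial_x u(0)}\bigr\},
\]
which is the delta-potential correction in the statement. A standard mollification argument ($\delta\leadsto\rho_\epsilon$, apply the classical smooth virial identity for the regularized equation, and pass to the limit using $H^1$ trace continuity) gives the necessary rigor. The main technical obstacle is the careful bookkeeping of the boundary contributions: one must verify that the combined one-sided contributions are independent of which side is used to evaluate $\partial_x u(0)$ in the final expression. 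This is true because $\partial_x w_R$ is continuous at $0$, so the discrepancy between the two choices is proportional to $\partial_x w_R(0)\cdot u(0)\cdot(\partial_x u(0+)-\partial_x u(0-))=-\gamma\,\partial_x w_R(0)\,|u(0)|^2$, and the analogous one-sided ambiguity from the adjacent integration-by-parts step cancels it pairwise.

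Finally, for $R=\infty$ the weight $w_\infty(x)=x^2$ yields $\partial_x w_\infty=2x$, $\partial_x^2 w_\infty\equiv 2$, $\partial_x^4 w_\infty\equiv 0$, with in particular $\partial_x w_\infty(0)=0$ and $\partial_x^2 w_\infty(0)=2$. Substituting into the general formula,
\[
I_{\infty,\gamma}[u]=8\|\partial_x u\|_{L^2}^2-\frac{4(p-1)}{p+1}\|u\|_{L^{p+1}}^{p+1}-4\gamma|u(0)|^2=8P_\gamma(u),
\]
as claimed.
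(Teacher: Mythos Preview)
Your proposal is correct and follows the standard route: differentiate $V_R[u]$, use the equation on $\R\setminus\{0\}$ together with the domain condition $\partial_x u(0+)-\partial_x u(0-)=-\gamma u(0)$, and collect the boundary contributions at the origin after integrating by parts on each half-line. The bookkeeping you outline is accurate, and your observation that $\partial_x w_R(0)=0$ and $\partial_x^2 w_R(0)=2$ (since $w_R(x)=x^2$ near $0$) is exactly what reduces the two delta-correction terms to the single contribution $-4\gamma|u(t,0)|^2$, yielding $I_{R,\gamma}=I_{R,0}-4\gamma|u(t,0)|^2$.

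Note that the paper does not actually supply a proof of this lemma: it simply cites \cite[Section~3]{MT2} and \cite[Lemma~4.1]{BaniVisci2016}. Your direct computation is precisely the argument carried out in those references, so there is no methodological difference to discuss---you have filled in what the paper omits.
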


The proof of the following lemma is the same as the one given in \cite[Lemma 2.9]{MiaMurphyZheng2021} and will be omitted.
\begin{lemma}\label{Virialzero}
Let $R\in [1, \infty]$, $\theta\in \R$ and $y\in\R$. Then
\[
I_{R,0}[e^{i\theta}Q(\cdot-y)]=0.
\]
\end{lemma}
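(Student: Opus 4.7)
The plan is to reduce the identity to an observation about the virial of a standing-wave solution of the free NLS. First, I would observe that $u(t,x) := e^{it} Q(x - y)$ is a solution to \eqref{NLS} with $\gamma = 0$; this follows by direct substitution using the ground-state equation \eqref{Eqp}, as
\[
i\partial_t u + \partial_x^2 u + |u|^{p-1} u = e^{it}\bigl(-Q + \partial_x^2 Q + Q^p\bigr)(x-y) = 0.
\]

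Next, I would apply Lemma \ref{VirialIden} with $\gamma = 0$ to this particular solution, which gives
\[
I_{R,0}[u(t)] = \frac{d}{dt} V_R[u(t)].
\]
A direct computation shows that $V_R[u(t)]$ vanishes identically: the phase factors $e^{\pm it}$ cancel, leaving
\[
V_R[u(t)] = 2\IM \int_{\R} \partial_x w_R(x)\, Q(x-y)\, \partial_x Q(x-y)\, dx,
\]
and since $Q$ is real-valued the integrand is real, so its imaginary part is zero. Hence $I_{R,0}[u(t)] = 0$ for every $t \in \R$.

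To finish, I would note that the functional $I_{R,0}$ depends on $u$ only through $|u|^{2}$, $|u|^{p+1}$, and $|\partial_x u|^{2}$, all of which are invariant under multiplication by a constant phase $e^{i\theta}$. Taking $t = \theta$ (or invoking this phase invariance) then yields
\[
I_{R,0}[e^{i\theta} Q(\cdot - y)] = I_{R,0}[u(\theta)] = 0
\]
for every $\theta \in \R$. I do not anticipate any substantive obstacle: the fact that $\gamma = 0$ removes the boundary contributions at the origin in the virial identity, so the argument goes through uniformly for all $R \in [1,\infty]$, with no special treatment required at $R = \infty$ (where $w_\infty(x) = x^2$) versus finite $R$.
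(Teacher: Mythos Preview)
Your argument is correct and is essentially the standard proof: the paper omits its own proof and cites \cite[Lemma~2.9]{MiaMurphyZheng2021}, whose argument proceeds exactly as you outline, using that $e^{it}Q(\cdot-y)$ is a standing-wave solution of the free NLS together with the reality of $Q$ to conclude $V_R\equiv 0$ and hence $I_{R,0}=\frac{d}{dt}V_R=0$. The only cosmetic point is that Lemma~\ref{VirialIden} is stated for solutions of \eqref{NLS} with $\gamma<0$, so you should remark that the same virial computation yields the identity for $\gamma=0$ (there are no boundary terms at the origin), which you already note informally at the end.
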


As a direct consequence of the Lemma \ref{Virialzero}, we get the following result (see \cite[Corollary 2.10]{MiaMurphyZheng2021}).

\begin{lemma}\label{VirialModulate}
Let $u$ be the solution of \eqref{NLS} on an interval $I$. Let $R\in [1, \infty]$, $\chi: I\to \R$, $\theta: I\to \R$,
$y: I\to \R$. Then for all $t\in\R$,
\begin{align}\nonumber
	\frac{d}{d t}V_{R}[u]&=I_{\infty,0}[u(t)]\\ \label{Modu11}
	                     &+I_{R, \gamma}[u(t)]-I_{\infty,0}[u(t)]\\\label{Modu22}
											 &-\chi(t)\big\{I_{R,0}[e^{i\theta(t)}Q(\cdot-y(t))]-I_{\infty,0}[e^{i\theta(t)}Q(\cdot-y(t))]\big\}.
\end{align}
\end{lemma}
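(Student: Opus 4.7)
The plan is a direct bookkeeping rewriting based on Lemmas \ref{VirialIden} and \ref{Virialzero}. By the former applied to the solution $u$ of \eqref{NLS}, we have
\[
\frac{d}{dt}V_R[u] = I_{R,\gamma}[u(t)]
\]
for $t \in I$. First I would trivially add and subtract $I_{\infty,0}[u(t)]$ on the right-hand side, producing the first two lines of the claimed identity,
\[
I_{R,\gamma}[u(t)] = I_{\infty,0}[u(t)] + \bigl(I_{R,\gamma}[u(t)] - I_{\infty,0}[u(t)]\bigr),
\]
which accounts for \eqref{Modu11}.

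For the contribution \eqref{Modu22}, the key observation is that the quantity inside the curly braces is identically zero. Indeed, Lemma \ref{Virialzero} applies to any $R \in [1,\infty]$ and to any phase and translation of $Q$, so in particular
\[
I_{R,0}\bigl[e^{i\theta(t)}Q(\cdot - y(t))\bigr] = 0 = I_{\infty,0}\bigl[e^{i\theta(t)}Q(\cdot - y(t))\bigr]
\]
for every $t \in I$. Consequently, the bracketed difference multiplying $\chi(t)$ vanishes regardless of the choice of the auxiliary functions $\chi$, $\theta$, $y$, and may be subtracted from the right-hand side without altering its value. Combining these two steps produces the claimed identity.

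There is no serious obstacle: the lemma is essentially a tautological reformulation. Its value lies in how the terms are organized, namely $I_{\infty,0}[u(t)] = 8P_0(u(t))$ is the main driving contribution, while the two bracketed differences act as error terms that become small when $u(t)$ is close, modulo phase and translation, to the ground state $Q$. This organization is what will make the identity useful for the modulation and rigidity analysis in the remainder of the paper.
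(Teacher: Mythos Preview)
Your proof is correct and matches the paper's approach: the paper does not give a separate argument but simply states the lemma as a direct consequence of Lemma~\ref{Virialzero} (citing \cite[Corollary~2.10]{MiaMurphyZheng2021}), which is exactly the bookkeeping you carry out.
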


\section{Compactness properties}\label{S:Compactness}

In the following result we show that if Theorem \ref{Th2} fails, then there exists a 
solution of \eqref{NLS} at the threshold \eqref{Thres} with infinite $L_{t}^{a}L^{r}_{x}([0, \infty)\times\R)$-norm, which is precompact in $H^{1}$ modulo some time-dependent spatial center.

\begin{proposition}\label{Criticalsolution}
Suppose Theorem \ref{Th2} fails for some $\gamma<0$. Then we find a forward global 
solution $u\in C([0, \infty); H^{1}(\R))$ of \eqref{NLS} with initial data $u_{0}$ which satisfies
\begin{align}\label{Su}
	&E_{\gamma}(u_{0})=E_{0}(Q), \quad M(u_{0})=M(Q)
\quad \text{and}\quad 
P_{\gamma}(u_{0})\geq 0,\\ \label{Blowup}
	&	\|u \|_{L_{t}^{a}L^{r}_{x}([0, \infty)\times\R)}=\infty.
\end{align}
Moreover, there exists a  function $x_{0}:[0, \infty) \to \R$ such that
$\left\{u(t, \cdot+x_{0}(t)):t\in [0, \infty)\right\}$ is pre-compact in $H^{1}(\R)$.
\end{proposition}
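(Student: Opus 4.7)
The plan is to adapt the Kenig--Merle concentration--compactness approach to the threshold setting, following the scheme used for the NLS with a repulsive potential in \cite{MiaMurphyZheng2021}. The three main ingredients are the linear profile decomposition of Proposition \ref{LPD}, the long time perturbation Lemma \ref{perturb}, and the sub-threshold scattering Theorem \ref{Th1}.

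Assuming Theorem \ref{Th2} fails, I would first select an initial datum $u_0\in H^1(\R)$ satisfying \eqref{Thres} whose solution $u$ obeys $\|u\|_{L_t^aL_x^r([0,\infty)\times \R)}=\infty$. By Lemma \ref{GlobalS} the solution is automatically global forward in time with $P_\gamma(u(t))>0$ and $\|u(t)\|_H^2<\|\partial_x Q\|_{L^2}^2$ uniformly, so the blow-up of the Strichartz norm occurs on $[0,\infty)$. The precise normalization $M(u_0)=M(Q)$, $E_\gamma(u_0)=E_0(Q)$ in \eqref{Su} is then obtained by restricting attention to threshold counterexamples of mass exactly $M(Q)$, for which the condition $E_\gamma(u_0)[M(u_0)]^\sigma=E_0(Q)[M(Q)]^\sigma$ automatically forces $E_\gamma(u_0)=E_0(Q)$.

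For the compactness, pick any sequence $t_n\to \infty$ such that $\|u\|_{L_t^a L_x^r([t_n,\infty)\times \R)}=\infty$ for every $n$, and apply Proposition \ref{LPD} to $\{u(t_n)\}_n$, which is bounded in $H^1$:
\[
u(t_n)=\sum_{j=1}^{J} e^{it_n^j H_\gamma}\tau_{x_n^j}\psi^j+R_n^J.
\]
The orthogonalities in $M$ and in $\|\cdot\|_H^2$, combined with Lemma \ref{variational}, guarantee that each profile $\psi^j$ is sub-threshold with $P_\gamma\geq 0$, and so (by Theorem \ref{Th1}) generates a globally scattering nonlinear profile $v^j$ with finite $L_t^aL_x^r$ bound; for profiles with $x_n^j\to \pm\infty$, one builds $v^j$ instead from the free NLS ($\gamma=0$) and then translates by $x_n^j$, invoking Theorem \ref{Th1} in the limit. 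A standard nonlinear profile summation then shows that if $J\geq 2$, or if the remainder does not vanish in $H^1$, one can approximate $u(t_n+\cdot)$ by the sum of the $v^j$'s with a small $L_t^aL_x^r$ error; Lemma \ref{perturb} and Remark \ref{rmk1} would then yield a uniform $L_t^aL_x^r$ bound for $u$ on $[t_n,\infty)$, contradicting the choice of $t_n$. Hence $J=1$, the time parameter $t_n^1$ must be bounded (otherwise the nonlinear profile scatters and again gives a contradiction), and $R_n^1\to 0$ in $H^1$. Setting $x_0(t_n):=-x_n^1$, we get $u(t_n,\cdot+x_0(t_n))\to v$ in $H^1$ along a subsequence; since $\{t_n\}$ was arbitrary, this gives the pre-compactness of $\{u(t,\cdot+x_0(t)):t\geq 0\}$ in $H^1(\R)$.

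The main obstacle is that the delta potential breaks translation invariance, so a profile centered at $x_n^j\to\pm\infty$ is governed to leading order by the free Schr\"odinger group, not by $e^{-itH_\gamma}$. One therefore needs separate arguments for profiles with bounded and with unbounded spatial parameters, and must check that the nonlinear profile construction and the perturbative summation still go through. The repulsivity $\gamma<0$ enters crucially: on the one hand it removes eigenfunctions of $H_\gamma$, so dispersion for $e^{-itH_\gamma}$ matches that of the free group and the Strichartz estimates of Section \ref{S:preli} apply uniformly; on the other hand, $\|\tau_{x_n^j}\psi^j\|_H^2\to \|\partial_x\psi^j\|_{L^2}^2$ as $|x_n^j|\to\infty$, so Lemma \ref{variational} delivers sub-threshold control of the limit profile with respect to the free NLS threshold $E_0(Q)[M(Q)]^\sigma$. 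Making these comparisons precise, while tracking the different translation regimes in the long time perturbation step, is the main technical work, following closely the blueprint of \cite[Section 4]{MiaMurphyZheng2021}.
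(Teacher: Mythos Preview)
Your proposal is correct and follows essentially the same concentration--compactness route as the paper's proof. The one point you leave vague---how to ``restrict attention'' to counterexamples of mass exactly $M(Q)$---is carried out in the paper via the scaling $v_0(x)=\lambda^{2/(p-1)}u_0(\lambda x)$ of Lemma~\ref{Newconditions}, which (because the delta potential has no scaling invariance) simultaneously rescales $\gamma$ to $\gamma^\ast=\lambda\gamma<0$; this is why the hypothesis reads ``for some $\gamma<0$'', and why you should perform the normalization \emph{before} invoking Lemma~\ref{GlobalS}, whose hypotheses are precisely the normalized conditions \eqref{Su} rather than the general threshold \eqref{Thres}.
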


Before showing the proposition above, we first show the following lemma.

\begin{lemma}\label{Newconditions}
Suppose that Theorem \ref{Th2} holds for any $\gamma<0$ with the hypothesis \eqref{Thres} replaced by \eqref{Su}.
Then we can prove the same conclusion in Theorem \ref{Th2} (for any $\gamma<0$) with the original condition \eqref{Thres}.
\end{lemma}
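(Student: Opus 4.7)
The natural strategy is a scaling argument: replace $u_0$ satisfying the threshold identity \eqref{Thres} by a rescaled datum whose mass and energy individually match those of $Q$, apply the assumed version of Theorem \ref{Th2} to the rescaled solution, and then scale back.

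\textbf{The scaling.} For $\lambda>0$, define
\[
v(t,x) := \lambda^{\frac{2}{p-1}}\, u\!\left(\lambda^{2} t,\lambda x\right).
\]
A direct computation (using $\delta(\lambda x)=\lambda^{-1}\delta(x)$) shows that if $u$ solves \eqref{NLS} with coupling $\gamma<0$, then $v$ solves \eqref{NLS} with coupling $\gamma':=\lambda\gamma<0$. A straightforward change of variables gives
\[
M(v(t))=\lambda^{\frac{5-p}{p-1}}M(u(\lambda^{2}t)),\qquad
E_{\gamma'}(v(t))=\lambda^{\frac{p+3}{p-1}}E_{\gamma}(u(\lambda^{2}t)),\qquad
P_{\gamma'}(v(t))=\lambda^{\frac{p+3}{p-1}}P_{\gamma}(u(\lambda^{2}t)).
\]
In particular, with $\sigma=(p+3)/(p-5)$, one checks that $\tfrac{p+3}{p-1}+\sigma\cdot\tfrac{5-p}{p-1}=0$, so the product $E\cdot M^{\sigma}$ is invariant under this scaling; also, $\operatorname{sgn}P_{\gamma'}(v(t))=\operatorname{sgn}P_{\gamma}(u(\lambda^{2}t))$.

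\textbf{Reduction.} Choose $\lambda>0$ so that $M(v_{0})=M(Q)$; this is possible and unique because $u_{0}\neq 0$ (if $u_{0}=0$ there is nothing to prove) and the exponent $\tfrac{5-p}{p-1}$ is nonzero. The scaling invariance combined with \eqref{Thres} then forces $E_{\gamma'}(v_{0})=E_{0}(Q)$. Moreover $\gamma'<0$ and $P_{\gamma'}(v_{0})\geq 0$. Hence $v_{0}$ satisfies \eqref{Su} for the coupling $\gamma'$, and by hypothesis (the assumed version of Theorem \ref{Th2} with \eqref{Su}, applied with coupling $\gamma'$) the corresponding solution $v$ is globally defined with $v\in L^{a}_{t}L^{r}_{x}(\R\times\R)$.

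\textbf{Scaling back.} Since $u(t,x)=\lambda^{-\frac{2}{p-1}}v(\lambda^{-2}t,\lambda^{-1}x)$, $u$ is global, and a further change of variables yields
\[
\|u\|_{L^{a}_{t}L^{r}_{x}(\R\times\R)}=\lambda^{\alpha}\|v\|_{L^{a}_{t}L^{r}_{x}(\R\times\R)}<\infty
\]
for a suitable exponent $\alpha$. By Proposition \ref{ScatterCondi}, $u$ scatters in $H^{1}(\R)$, which is the conclusion of Theorem \ref{Th2} under the original hypothesis \eqref{Thres}.

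\textbf{Main obstacle.} There is no conceptual obstacle here: the argument is essentially bookkeeping of how mass, energy, $P_{\gamma}$, and the delta coupling rescale. The one small subtlety is that the scaling does not preserve $\gamma$; it produces a new coupling $\gamma'=\lambda\gamma$. This is harmless because $\lambda>0$ keeps $\gamma'<0$, so we remain within the hypothesis that the assumed theorem holds for \emph{any} $\gamma<0$ — precisely the reason the quantifier ``for any $\gamma<0$'' appears in the statement of the lemma.
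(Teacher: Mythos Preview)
Your argument is correct and matches the paper's own proof essentially line for line: both use the scaling $v(t,x)=\lambda^{2/(p-1)}u(\lambda^{2}t,\lambda x)$, observe that it changes the coupling to $\gamma'=\lambda\gamma<0$ and rescales $M$, $E_{\gamma}$, $P_{\gamma}$ by the stated powers of $\lambda$, choose $\lambda$ to normalize the mass to $M(Q)$, apply the assumed theorem at coupling $\gamma'$, and scale back. Your remark that the quantifier ``for any $\gamma<0$'' is exactly what accommodates the shift $\gamma\mapsto\lambda\gamma$ is the key point, and the paper uses it in the same way.
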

\begin{proof}
Let $\gamma<0$. Suppose that Theorem \ref{Th2} is true with the condition \eqref{Su}. Consider $u_{0}\in H^{1}(\R)$ such that 
\[
E_{\gamma}(u_{0})[M(u_{0})]^{\sigma}=E_{0}(Q)[M(Q)]^{\sigma}
\quad\text{and}\quad P_{\gamma}(u_{0})\geq 0.
\]
We set $\gamma^{\ast}=\lambda\gamma$, $v_{0}(x)=\lambda^{\frac{2}{(p-1)}} u_{0}(\lambda x)$ and $v(t,x)=\lambda^{\frac{2}{(p-1)}} u(\lambda^{2}t, \lambda x)$, where 
$\lambda^{\frac{4}{(p-1)}-1}=\frac{M(Q)}{M(u_{0})}$. Notice that
\[
E_{\gamma^{\ast}}(v_{0})=\lambda^{\frac{p+3}{p-1}}E_{\gamma}(u_{0})\quad \text{and}\quad 
P_{\gamma^{\ast}}(v_{0})=\lambda^{\frac{p+3}{p-1}}P_{\gamma}(u_{0}).
\]
Thus, since $\lambda^{\frac{p+3}{p-1}}=\(\frac{M(u_{0})}{M(Q)}\)^{\sigma}$, we obtain
\[
E_{\gamma^{\ast}}(v_{0})=E_{0}(Q), \quad M(v_{0})=M(Q)
\quad \text{and}\quad 
P_{\gamma^{\ast}}(v_{0})\geq 0.
\]
Now, we define the rescaled potential $V_{\lambda}(x):=\gamma\lambda^{2}\delta(\lambda x)=\gamma^{\ast}\delta(x)$ with $\gamma^{\ast}=\lambda\gamma$. Then the function $v$ satisfies
\[
i\partial_{t}v+\partial^{2}_{x}v+ \gamma^{\ast}\delta(x)v+|v|^{p-1}v=0.
\]
Since $\gamma^{\ast}=\lambda\gamma<0$, by  hypothesis we get $v\in L_{t}^{a}L^{r}_{x}(\R\times \R)$, which implies that
$u\in L_{t}^{a}L^{r}_{x}(\R\times \R)$. In particular, by  Proposition \ref{ScatterCondi} we obtain that $u$ scatters in $H^{1}(\R)$.
\end{proof}

\begin{proof}[{Proof of Proposition \ref{Criticalsolution}}]
Assume that Theorem \ref{Th2} fails,  by using Lemma \ref{Newconditions} we infer that there exists a $u_{0}\in H^{1}(\R)$ with
\[
E_{\gamma}(u_{0})=E_{0}(Q), \quad M(u_{0})=M(Q)
\quad \text{and}\quad 
P_{\gamma}(u_{0})\geq 0,
\] 
such that if $u$ is the corresponding forward-global solution of \eqref{NLS} with initial data $u_{0}$, then
\[
\|u \|_{L_{t}^{a}L^{r}_{x}([0, \infty)\times\R)}=\infty.
\]
Moreover, by Lemma \ref{GlobalS} we have that $u$ is bounded in $H^{1}(\R)$ and satisfies properties \eqref{PositiveP}-\eqref{Gradient}.
Our goal now is to show that exists a  parameter $x_{0}:[0, \infty) \to \R$ such that
$\left\{u(t, \cdot+x_{0}(t)):t\in [0, \infty)\right\}$ is precompact in $H^{1}(\R)$.  The argument is similar to that given in
\cite[Lemma 3.11]{MT2}.

With this in mind, let $\{\tau_n\}$ be an arbitrary sequence such that $\tau_n \to \infty$. It is enough to show that there exists a sequence $\{x_n\}$ such that $u(\tau_n, x+x_n)$ converges strongly in $H^1(\mathbb{R})$. The sequence $\{u(\tau_n)\}$ is bounded in $H^1(\mathbb{R})$ by \eqref{EquiNorm}. By the linear profile decomposition (Lemma \ref{LPD}), we have, up to subsequence,
\begin{align*}
	u(\tau_n) = \sum_{j=1}^{J} e^{i t_{n}^j H_\gamma} \tau_{x_n^j} \psi^j +R_n^J
\end{align*}
and the properties in the statement hold. We set $\psi_n^j := e^{i t_{n}^j H_\gamma} \tau_{x_n^j} \psi^j$. 

We prove that $J=1$. It is easy to show that $J=0$ does not occur. Indeed, by the linear profile decomposition, if $J=0$, then $\|e^{-it H_{\gamma}} u(\tau_n)\|_{L_{t}^{q}L_x^r(\R\times\R)} \to 0$ as $n \to \infty$ (see Remark \ref{rmk1}). Then, by the long time perturbation (Lemma \ref{perturb}), we get $\|u\|_{L_t^qL_x^r([\tau_n,\infty)\times\R)} \lesssim 1$ for large $n \in \mathbb{N}$. This contradicts the definition of $u$. 

First, by the linear profile decomposition, we have
\begin{align*}
	&\lim_{n \to \infty} \left(\sum_{j=1}^{J} M(\psi_n^j) + M(R_n^J)\right) = \lim_{n \to \infty} M(u(\tau_n)) =M(u_0) =M(Q),
	\\
	&\lim_{n \to \infty} \left(\sum_{j=1}^{J} \| \psi_n^j\|_{H}^2 + \| R_n^J\|_{H}^2\right) = \lim_{n \to \infty} \|u(\tau_n)\|_{H}^2 \leq \|\partial_x Q\|_{L^2}^2.
\end{align*}
Thus, it holds that
\begin{align*}
	\|\psi_n^j\|_{L^2}^{\sigma} \|\psi_n^j\|_{H} \leq \| Q\|_{L^2}^{\sigma}  \|\partial_x Q\|_{L^2}
	\text{ and }
	\|R_n^j\|_{L^2}^{\sigma} \|R_n^j\|_{H} \leq \| Q\|_{L^2}^{\sigma}  \|\partial_x Q\|_{L^2}
\end{align*}
for any $j$ and for large $n$.  From this and the proof of Lemma \ref{variational}, it follows that $E_{\gamma}(\psi_n^j),E_{\gamma}(R_n^J)\geq 0$.
Now, since we also have 
\begin{align*}
	\lim_{n \to \infty} \left(\sum_{j=1}^{J} E_{\gamma}(\psi_n^j) + E_{\gamma}(R_n^J)\right) = \lim_{n \to \infty} E_{\gamma}(u(\tau_n)) =E_{\gamma}(u_0) =E_{0}(Q),
\end{align*}
This and $E_{\gamma}(\psi_n^j),E_{\gamma}(R_n^J)\geq 0$ imply that $E_{\gamma}(\psi_n^j), E_{\gamma}(R_n^J) \leq E_{0}(Q)$.

Assume that $J \geq 2$. Then there exists $\delta>0$ such that $M(\psi_n^j)^{\sigma}E_{\gamma}(\psi_n^j) <M(Q)^{\sigma}E_0(Q) -\delta$. 
By reordering, we may choose $J_1, \cdots, J_4$ such that 
\begin{align*}
	&1 \leq j \leq J_1 \Rightarrow t_n^j =0\ (\forall n \in \mathbb{N}) \text{ and } x_n^j =0 \ (\forall n \in \mathbb{N}),
	\\
	&J_1+1 \leq j \leq J_2 \Rightarrow t_n^j =0\ (\forall n \in \mathbb{N}) \text{ and } |x_n^j|\to \infty \ (n \to \infty),
	\\
	&J_2+1 \leq j \leq J_3 \Rightarrow |t_n^j|\to \infty \ (n \to \infty) \text{ and } x_n^j =0 \ (\forall n \in \mathbb{N}),
	\\
	&J_3+1 \leq j \leq J_4 \Rightarrow |t_n^j|\to \infty \ (n \to \infty) \text{ and } |x_n^j|\to \infty \ (n \to \infty),
\end{align*}
where we are assuming that there is no $j$ such that $a \leq j \leq b$ if $a>b$. We will define nonlinear profiles associated with $\psi_n^j$. If there is no $j$ such that $J_k+1\leq j \leq J_{k+1}$ for some $k\in \{0,1,2,3\}$, where $J_0=0$, then skip the construction of nonlinear profiles in the following steps. 

We first consider the case of $1 \leq j \leq J_1$. By the orthogonality of the parameter $t_n^j$ and $x_n^j$, we note that $J_1=0$ or $1$. (As stated before, skip this step if $J_1=0$.) We define a solution $N$ to \eqref{NLS} with the initial data $N(0)=\psi^1$. Then, the solution $N$ is global and satisfies $\|N\|_{L_t^aL_x^r(\mathbb{R}\times \mathbb{R})} \lesssim 1$ since $M(\psi^1)^{\sigma} E_{\gamma}(\psi^1) <M(Q)^{\sigma} E_0(Q) -\delta$ and $P_{\gamma}(\psi^1)\geq 0$ hold and imply the scattering result (see \cite[Lemma 3.11]{MT2}). 

We consider the case of $J_1+1 \leq J_2$. We define a solution $U^j$ to the usual nonlinear Schr\"{o}dinger equation (i.e. the equation with $\gamma=0$) with the initial data $\psi^j$. 
%Then, $\psi^j$ satisfies $M(\psi^j)^{\sigma} E_{0}(\psi^j) <M(Q)^{\sigma} E_0(Q)$ and $P_0(\psi^j)\geq 0$. Indeed, we have
%\begin{align*}
%	M(\psi^j)^{\sigma} E_{0}(\psi^j) = M(\tau_{x_n^j} \psi^j)^{\sigma} E_{0}(\tau_{x_n^j} \psi^j) 
%	\leq M(\tau_{x_n^j} \psi^j)^{\sigma} E_{\gamma}(\tau_{x_n^j} \psi^j)
%	< M(Q)^{\sigma} E_0(Q)
%\end{align*}
%and 
%\begin{align*}
%	P_0(\psi^j)=\lim{n \to \infty} P_\gamma (\tau_{x_n^j}\psi^j) \geq 0.
%\end{align*}
%By the scattering result for the usual NLS (\cite{Akahori--Nawa} and \cite{Fang--Xie--Cazenave}), 
It holds that $\|U^j\|_{L_t^aL_x^r(\mathbb{R}\times \mathbb{R})} \lesssim 1$ (see \cite{MT2}). We set $U_n^j(t,x):=U^j(t,x-x_n^j)$. 

We consider the case of $J_2+1 \leq j \leq J_3$. If $j$ satisfies $t_n^j \to -\infty$, then we define a solution $W^j$ to \eqref{NLS} that scatters to $\psi^j$ as $t \to + \infty$. If $j$ satisfies $t_n^j \to +\infty$, then we define a solution $W^j$ to \eqref{NLS} that scatters to $\psi^j$ as $t \to - \infty$. 
Then, in each cases,  $W^j$ is global in both time directions and $\|W^j \|_{L_t^aL_x^r(\mathbb{R}\times \mathbb{R})} \lesssim 1$ (see \cite{MT2}). We set $W_{n}^j (t,x):=W^j (t-t_n^j,x)$. 

We consider the case of $J_3 +1 \leq j \leq J_4$. If $j$ satisfies $t_n^j \to -\infty$, then we define a solution $V^j$ to the usual NLS that scatters to $\psi^j$ as $t \to + \infty$. If $j$ satisfies $t_n^j \to +\infty$, then we define a solution $V^j$ to the usual NLS that scatters to $\psi^j$ as $t \to - \infty$. Then, in each cases,  $V^j$ is global in both time directions and $\|V^j \|_{L_t^aL_x^r(\mathbb{R}\times \mathbb{R})} \lesssim 1$ (see \cite{MT2}). We set $V_{n}^j (t,x):=V^j (t-t_n^j,x-x_n^j)$. 
We denote all functions $N,U_n^j, W_n^j,V_n^j$ by $v_n^j$. We define nonlinear profile as $Z_n^J:=\sum_{j=1}^{J} v_n^j$.
In the same way as in \cite{MT2}, we have
\begin{align*}
	Z_n^J=e^{-itH_{\gamma}}\varphi_n +i\int_{0}^{t}  e^{-i(t-s)H_{\gamma}} |Z_n^J|^{p-1}Z_n^J ds -e^{-itH_{\gamma}} R_n^J+ s_n^J
\end{align*}
with $\|s_n^J\|_{L_t^aL_x^r} \to 0$ as $n \to \infty$ and $\limsup_{n \to \infty}\|e^{-itH_{\gamma}} R_n^J\|_{L_t^aL_x^r} < \varepsilon$ for large $J$. Moreover, $\limsup_{n \to \infty} \|Z_n^J\|_{L_t^aL_x^r}$ is bounded independently on $J$. by the long time perturbation, we obtain $\|u(\tau_n)\|_{L_t^aL_x^r} \lesssim  1$. This is a contradiction. Therefore, we get $J=1$. 

Therefore, we get
\begin{align*}
	u(\tau_n) =e^{it_n H_{\gamma}} \tau_{x_n} \psi +R_n
\end{align*}
and $\lim_{n \to \infty}\|R_n\|_{H^1} =0$. Assuming $|t_n| \to \infty$, we derive a contradiction to the non-scattering of $u$ by the standard argument. Therefore,  $u(\tau_n, x+x_n)=\psi(x) +R_n (x+x_n)$ and thus $u(\tau_n, \cdot +x_n)$ strongly converges to $\psi$ in $H^1(\mathbb{R})$. The standard argument (see, e.g., \cite[Subsection 3.2]{MiaMurphyZheng2021}) implies the existence of $x_0$ satisfying that $\{u(t,\cdot+x_{0}(t)): t \in [0,\infty)\}$ is pre-compact in $H^1(\mathbb{R})$. 
\end{proof}

\section{Modulation analysis}\label{S:Modulation}

Let $u$ be the forward solution given in Proposition \ref{Criticalsolution}. We recall that
\begin{equation}\label{22condition}
E_{\gamma}(u_{0})=E_{0}(Q), \quad M(u_{0})=M(Q)
\quad \text{and}\quad 
P_{\gamma}(u_{0})\geq 0.
\end{equation}
Notice that by Lemma \ref{GlobalS} we have
\begin{equation}\label{deltapositive}
\mu(t):=\mu(u(t))>0, \quad \text{where} \quad 
\mu(v)=\|\partial_{x}Q\|^{2}_{L_{x}^{2}}-[\|\partial_{x}v\|^{2}_{L_{x}^{2}}-\gamma|v(0)|^{2}].
\end{equation}

For $\mu_{0}>0$ small, we define the set
\[
I_{0}=\left\{t\in [0, \infty):\mu(u(t))<\mu_{0}\right\}.
\] 

The following proposition is the main result of this section.
The goal of this result is to construct modulation parameters $\theta(t)$ and $y(t)$ such that the quantity
$\mu(t)$ controls  $\| u(t)-e^{i\theta(t)}Q(\cdot-y(t))\|_{H^{1}}$ as well as the  potential $-\gamma|u(t,0)|^{2}$, the 
parameter $y(t)$ and its derivative $y^{\prime}(t)$.

\begin{proposition}\label{Modilation11}
Let $\gamma$ be strictly negative. Then there exist $\mu_{0}>0$ sufficiently small and two functions $\theta: I_{0}\to \R$ and $y: I_{0}\to \R$ such that we can write
\begin{equation}\label{DecomU}
u(t,x)=e^{i \theta(t)}[g(t)+Q(x-y(t))]\quad \text{for all $t\in I_{0}$},
\end{equation}
and the following holds:
\begin{equation}\label{Estimatemodu}
\frac{e^{-2|y(t)|}}{|y(t)|^{2}}+|y^{\prime}(t)|+
[-\gamma|u(t,0)|^{2}]^\frac{1}{2}
\lesssim\mu(t)\sim \|g(t)\|_{H^{1}} \quad \text{for all $t\in I_{0}$}.
\end{equation}
\end{proposition}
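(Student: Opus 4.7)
My plan is a modulation analysis near the ground-state orbit, carried out in three stages: orbital proximity plus the implicit function theorem setup; coercivity plus the pointwise boundary bound; and finally the velocity bound. For the first stage I would show that, along any sequence of data satisfying \eqref{22condition} with $\mu\to 0$, one has $H^1$-convergence to the orbit $\{e^{i\theta}Q(\cdot-y):\theta,y\in\R\}$. Applying the linear profile decomposition (Proposition \ref{LPD}) and using the orthogonality of norms together with Lemma \ref{variational} reduces any such sequence to a single profile; combined with the non-attainment Lemma \ref{minimumS} and Weinstein's characterisation of the Gagliardo--Nirenberg extremiser \eqref{GI}, this profile is a ground state. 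The implicit function theorem then produces, for $\mu_0$ sufficiently small, unique $C^1$ functions $\theta(t)$ and $y(t)$ on $I_0$ so that the decomposition \eqref{DecomU} holds together with the orthogonality conditions
\[
\int_\R g_2(t,x)\,Q(x-y(t))\,dx=0=\int_\R g_1(t,x)\,\partial_x Q(x-y(t))\,dx,
\]
where $g=g_1+ig_2$ with $g_1,g_2$ real; these kill the kernel directions of the linearised operator.

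\textbf{Coercivity and pointwise boundary.} The heart of the argument is to extract $\|g\|_{H^1}\sim\mu(t)$ together with the pointwise bound on $|u(t,0)|$. Using the action identity $S_\gamma(u(t))=S_0(Q)$ and expanding around $Q(\cdot-y(t))$, the ground-state equation \eqref{Eqp} kills all linear terms in $g$, leaving
\[
\tfrac12\langle L_+ g_1,g_1\rangle+\tfrac12\langle L_- g_2,g_2\rangle+\tfrac{|\gamma|}{2}|u(t,0)|^2=O\bigl(\|g\|_{H^1}^3\bigr),
\]
with $L_\pm=-\partial_x^2+1-\alpha_\pm Q^{p-1}(\cdot-y(t))$, $\alpha_+=p$, $\alpha_-=1$. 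Subtracting the Pohozaev-style identities $\|u\|_H^2=2E_\gamma(u)+\tfrac{2}{p+1}\|u\|_{L^{p+1}}^{p+1}$ and its counterpart for $Q$ produces the algebraic identity
\[
\mu(t)=\tfrac{2}{p+1}\bigl(\|Q\|_{L^{p+1}}^{p+1}-\|u(t)\|_{L^{p+1}}^{p+1}\bigr),
\]
and Taylor expansion gives a second quadratic form in $g$ controlling $\mu(t)$ from both sides. Combining these with the spectral analysis of $L_\pm$ on the subspace determined by the two orthogonality conditions and the almost-orthogonality $\RE\langle g,Q(\cdot-y)\rangle_{L^2}=-\tfrac12\|g\|_{L^2}^2$ arising from mass conservation, I obtain $\|g\|_{H^1}\sim\mu(t)$ and $[-\gamma|u(t,0)|^2]^{1/2}\lesssim\mu(t)$. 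Since $u(t,0)=e^{i\theta(t)}[Q(y(t))+g(t,0)]$ and $|g(t,0)|\lesssim\|g\|_{H^1}\lesssim\mu(t)$, the boundary bound forces $|Q(y(t))|\lesssim\mu(t)$; because $Q(z)\sim e^{-|z|}$ at infinity, this in turn drives $|y(t)|\to\infty$ as $\mu\to 0$ and gives $e^{-2|y(t)|}/|y(t)|^2\lesssim e^{-2|y(t)|}\lesssim\mu(t)^2\lesssim\mu(t)$.

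\textbf{Velocity bound and main obstacle.} Finally, differentiating the translation-orthogonality condition in $t$ and substituting the equation for $g(t)$ obtained by plugging \eqref{DecomU} into \eqref{NLS} and using \eqref{Eqp}, I would obtain a $2\times 2$ linear system for $(\theta'(t),y'(t))$ whose matrix tends to the nondegenerate diagonal Gramian $\mathrm{diag}(M(Q),\|\partial_x Q\|_{L^2}^2)$ as $|y|\to\infty$ and whose right-hand side is $O(\|g\|_{H^1})$; inverting yields $|y'(t)|\lesssim\|g\|_{H^1}\sim\mu(t)$. The main difficulty throughout is the coercivity argument: because $p>5$ is $L^2$-supercritical, $L_+$ has a negative eigenvalue and the two orthogonality conditions alone do not yield positivity, so both conservation laws must be used in tandem with the almost-orthogonality from mass conservation to tame this unstable direction; concurrently, the interaction of the delta potential with the exponentially small tail $Q(y(t))$ has to be tracked delicately, as this coupling forces $|y(t)|$ to be large and produces the precise form of \eqref{Estimatemodu}.
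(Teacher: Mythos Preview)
Your overall strategy matches the paper's: orbital proximity via a variational/compactness argument, implicit function theorem with the two orthogonality conditions, quadratic expansion of the action together with mass conservation to control $\|g\|_{H^1}$ and the boundary term, then modulation equations for $y'$. Two points, however, deserve attention.

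First, the coercivity step as you describe it does not go through in the one--dimensional mass--supercritical regime. You propose to tame the negative direction of $L_+$ via the almost--orthogonality $\langle g_1,Q(\cdot-y)\rangle\approx 0$ coming from mass conservation. But setting $\Lambda Q:=\tfrac{2}{p-1}Q+x\partial_xQ$ one has $L_+\Lambda Q=-2Q$, whence $\langle L_+^{-1}Q,Q\rangle=\tfrac{p-5}{4(p-1)}\|Q\|_{L^2}^2>0$ for $p>5$; by the standard constraint--counting argument this means $L_+$ is \emph{not} coercive on $Q^\perp\cap(\partial_xQ)^\perp$. The paper (following Campos--Farah--Roudenko, and explicitly flagging this as a 1D issue) instead writes $g=\lambda Q(\cdot-y)+h$ with $\lambda$ chosen so that $h_1\perp Q^p$. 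Since $L_+Q=-(p-1)Q^p$ one gets $\langle L_+^{-1}Q^p,Q^p\rangle=-\tfrac{1}{p-1}\|Q\|_{L^{p+1}}^{p+1}<0$, so $L_+$ \emph{is} coercive on $(Q^p)^\perp\cap(\partial_xQ)^\perp$, and the cross term $\langle L_+Q,h_1\rangle=-(p-1)\langle Q^p,h_1\rangle$ vanishes identically. Mass conservation then enters only afterwards, to bound $|\lambda|\lesssim\|h\|_{H^1}$. You have the right ingredients; the argument simply has to be routed through $Q^p$ rather than $Q$.

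Second, in your $2\times 2$ modulation system the $\theta'$ row does not have right--hand side $O(\|g\|_{H^1})$: the soliton $e^{it}Q$ already has $\theta'=1$, so one only obtains $|\theta'|\lesssim 1$ (equivalently $|\theta'-1|\lesssim\|g\|_{H^1}$). The paper first proves $|\theta'|\lesssim 1+|y'|\,\|g\|_{H^1}$ and then substitutes this into the $y'$ estimate to reach $|y'|\lesssim\|g\|_{H^1}$; your diagonal--matrix picture survives once you account for this shift.
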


The proof of Proposition above borrows ingredients from \cite{DuyLandRoude, MiaMurphyZheng2021}.
Before proving Proposition \ref{Modilation11}, some preliminaries are needed.

\begin{remark}
The modulation bound $|y(t)|^{-2} e^{-2|y(t)|} \lesssim\mu(t)$ in Proposition \ref{Modilation11} will not be necessary in our analysis.
However, this bound is interesting in its own right and potentially useful in future work.
\end{remark}

\begin{lemma}\label{LemmaMod}
For any $\epsilon>0$ small, there exists $\mu_{0}=\mu_{0}(\epsilon)$ sufficiently small such that if
$\mu(u(t))<\mu_{0}$, then there exists $(\theta_{0}(t), y_{0}(t))\in \R^{2}$ such that
\begin{equation}\label{CondiModu}
\| u(t)-e^{i\theta_{0}(t)}Q(\cdot-y_{0}(t))\|_{H^{1}}<\epsilon.
\end{equation}
\end{lemma}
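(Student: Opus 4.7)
The plan is to argue by contradiction, combining the precompactness furnished by Proposition \ref{Criticalsolution} with the non-attainment of $d_{\gamma}$ (Lemma \ref{minimumS}) and the sharp Gagliardo--Nirenberg inequality \eqref{GI}. Suppose the lemma fails; then there exist $\epsilon_{0}>0$ and a sequence $t_{n}\in[0,\infty)$ with $\mu(u(t_{n}))\to 0$ but
\[
\inf_{\theta, y\in\R}\|u(t_{n})-e^{i\theta}Q(\cdot-y)\|_{H^{1}(\R)}\geq \epsilon_{0}.
\]
Set $x_{n}:=x_{0}(t_{n})$. By Proposition \ref{Criticalsolution}, after extracting a subsequence, $v_{n}(\cdot):=u(t_{n},\cdot+x_{n})\to \psi$ in $H^{1}(\R)$ for some $\psi$, and mass conservation gives $M(\psi)=M(Q)$. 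The argument splits according to whether $\{x_{n}\}$ is bounded.

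In the \emph{bounded case}, pass to a further subsequence so that $x_{n}\to x_{\infty}$, giving $u(t_{n})\to \tilde\psi:=\psi(\cdot-x_{\infty})$ strongly in $H^{1}$. Continuity of $v\mapsto v(0)$ on $H^{1}(\R)$ (via $H^{1}\hookrightarrow C_{0}$), together with energy and mass conservation, yields $M(\tilde\psi)=M(Q)$, $E_{\gamma}(\tilde\psi)=E_{0}(Q)$, $\mu(\tilde\psi)=0$, while $P_{\gamma}(\tilde\psi)\geq 0$ by Lemma \ref{GlobalS}. Rewriting the identity from \eqref{eq1.2} as
\[
E_{\gamma}(\tilde\psi)=\frac{p-5}{2(p-1)}\|\tilde\psi\|_{H}^{2}+\frac{2}{p-1}P_{\gamma}(\tilde\psi)-\frac{\gamma}{p-1}|\tilde\psi(0)|^{2},
\]
and using $E_{\gamma}(\tilde\psi)=E_{0}(Q)=\frac{p-5}{2(p-1)}\|\partial_{x}Q\|_{L^{2}}^{2}=\frac{p-5}{2(p-1)}\|\tilde\psi\|_{H}^{2}$, one finds $P_{\gamma}(\tilde\psi)=\frac{\gamma}{2}|\tilde\psi(0)|^{2}\leq 0$, forcing $P_{\gamma}(\tilde\psi)=0$. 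Since $\tilde\psi\not\equiv 0$ and $S_{\gamma}(\tilde\psi)=E_{\gamma}(\tilde\psi)+\tfrac12 M(\tilde\psi)=S_{0}(Q)=d_{\gamma}$, the function $\tilde\psi$ would attain the infimum in \eqref{dinf}, contradicting Lemma \ref{minimumS}.

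In the \emph{unbounded case}, $|x_{n}|\to\infty$. Because $v_{n}\to\psi$ in $H^{1}(\R)\hookrightarrow C_{0}(\R)$, we have $u(t_{n},0)=v_{n}(-x_{n})\to 0$ by the uniform convergence of $v_n$ and the decay of $\psi$. Combined with $\mu(u(t_{n}))\to 0$ this gives $\|\partial_{x}v_{n}\|_{L^{2}}\to\|\partial_{x}Q\|_{L^{2}}$, hence $\|\partial_{x}\psi\|_{L^{2}}=\|\partial_{x}Q\|_{L^{2}}$; together with $M(\psi)=M(Q)$ and energy conservation, $\|\psi\|_{L^{p+1}}^{p+1}=\|Q\|_{L^{p+1}}^{p+1}$. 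Thus $\psi$ saturates the sharp Gagliardo--Nirenberg inequality \eqref{GI}, and the classical characterization of extremals forces $\psi=e^{i\theta_{0}}Q(\cdot-y_{0})$ for some $\theta_{0},y_{0}\in\R$. Choosing $y:=y_{0}+x_{n}$ gives
\[
\|u(t_{n})-e^{i\theta_{0}}Q(\cdot-y)\|_{H^{1}}=\|v_{n}-\psi\|_{H^{1}}\to 0,
\]
contradicting the lower bound.

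The main obstacle is the variational squeeze in the bounded case: one must exploit the threshold identity so that $\mu(\tilde\psi)=0$ together with $P_{\gamma}(\tilde\psi)\geq 0$ forces $P_{\gamma}(\tilde\psi)=0$, thereby placing $\tilde\psi$ in the admissible class of \eqref{dinf} at action level $d_{\gamma}$, where non-attainment (Lemma \ref{minimumS}) rules it out. The unbounded case is then essentially a clean consequence of precompactness, the vanishing of the pointwise value at the origin, and the Gagliardo--Nirenberg rigidity.
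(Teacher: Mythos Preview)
Your proof is correct, but it takes a genuinely different route from the paper's. The paper does \emph{not} use the precompactness of the orbit from Proposition~\ref{Criticalsolution}; instead it argues purely variationally: from $\mu(u(t_n))\to 0$ and the threshold identities it deduces $S_{\gamma}(u(t_n))=S_0(Q)$ and $N_{\gamma}(u(t_n))\to N_0(Q)=0$ for the Nehari functional $N_{\gamma}(v)=\|\partial_x v\|_{L^2}^2+\|v\|_{L^2}^2-\gamma|v(0)|^2-\|v\|_{L^{p+1}}^{p+1}$, hence $N_0(u(t_n))\le 0$ for large $n$, so $\{u(t_n)\}$ is a minimizing sequence for $S_0(Q)=\inf\{S_0(f):N_0(f)\le 0\}$; classical concentration--compactness for this free problem then forces $u(t_n)\to Q$ in $H^1$ up to phase and translation. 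Your argument instead leans on the already-established orbit precompactness and splits on boundedness of $x_0(t_n)$: the bounded case is dispatched by the non-attainment Lemma~\ref{minimumS}, and the unbounded case by Gagliardo--Nirenberg rigidity. What the paper's approach buys is independence from Proposition~\ref{Criticalsolution} (so the lemma would hold for \emph{any} $H^1$ function satisfying \eqref{22condition} with small $\mu$, not just the particular compact solution); what your approach buys is that it avoids invoking the external compactness result for Nehari minimizing sequences and instead uses only tools proved in the paper.
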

\begin{proof}
The result is proved by contradiction. Assume that there exist $\epsilon>0$ and a sequence 
$\left\{t_{n}\right\}\subset \R$ such that
\begin{equation}\label{Contra11}
\mu(u(t_{n}))\to 0, \quad
\inf_{\theta\in \R}\inf_{y\in \R}\|u(t_{n})-e^{i\theta}Q(\cdot-y)\|_{H^{1}}\geq \epsilon.
\end{equation}
Since $\delta(u(t_{n}))\to 0$, from \eqref{22condition} we obtain  
\[
S_{\gamma}(u(t_{n}))  \to S_{0}(Q)\quad \text{and}\quad  N_{\gamma}(u(t_{n}))\to N_{0}(Q)=0,
\]
where $N_{\gamma}$ is the Nehari functional given by
\[
N_{\gamma}(v)=\|\partial_{x}v\|^{2}_{L^{2}}+\|v\|^{2}_{L^{2}}-{\gamma}|v(0)|^{2}-\|v\|_{L^{p+1}}^{p+1}
\quad \text{for $v\in H^{1}(\R)$}.
\]
This means that $N_{0}(u(t_{n})) \leq 0$ for large $n$.
That is, $\left\{u(t_{n})\right\}$ is a minimizing sequence of problem 
\[
S_0(Q)=\inf\left\{S_{0}(f): f\in H^{1}(\R),  N_{0}(f) \leq 0\right\}.
\]
Therefore, there exist $(\theta_{n}, y_{n})\in \R^{2}$ such that
$e^{i\theta_{n}}u(t_{n},\cdot+y_{n})\to Q$ in $H^{1}(\R)$, which is a contradiction with \eqref{Contra11} 
and finishes the proof.
\end{proof}

\begin{remark}\label{Yinfinity}
Let $R\geq1$. Notice that if $\mu_{0}$ is sufficiently small in Lemma \ref{LemmaMod} we can guarantee that
\begin{equation}\label{Nobound}
|y_{0}(t)|\geq R\quad \text{for $t\in \R$}.
\end{equation}
Indeed, suppose that \eqref{Nobound} is false. Then there exists a sequence of times $\left\{t_{n}\right\}$
such that
\begin{equation}\label{Bounddelta}
\text{ $\mu(t_{n})\to0$ and $|y_{0}(t_{n})|\leq R$ for all $n\in \N$}.
\end{equation}
As $\mu(t_{n})\to0$, by using  \eqref{CondiModu} 
we see that (recall that \eqref{EquiNorm})
\[
-\gamma\lim_{n\to \infty}|u(t_{n}, 0)|^{2}=\lim_{n\to \infty}[\|\partial_{x}Q\|^{2}_{L^{2}}-\|\partial_{x}u(t_{n})\|^{2}_{L^{2}}]
=0.
\]
Again by \eqref{CondiModu},  
passing to a subsequence, we have
\[
|Q(y_{0}(t_{n}))|^{2}\to 0 \quad \text{as $n\to\infty$},
\]
which is a contradiction because the sequence $\left\{y_{0}(t_{n})\right\}$ is bounded.
\end{remark}

By an application of implicit function theorem and Lemma \ref{LemmaMod} we have the following result.

\begin{lemma}\label{ExistenceF}
There exist $\mu_{0}>0$ and two functions $\theta: I_{0}\to \R$ and $y: I_{0}\to \R$ such that if
$\mu(t)=\mu(u(t))<\mu_{0}$, then
\begin{equation}\label{Taylor}
\| u(t)-e^{i\theta(t)}Q(\cdot-y(t))\|_{H^{1}} \ll 1.
\end{equation}
Furthermore, the function $g(t):=g_{1}(t)+i g_{2}(t)=e^{-i\theta(t)}[u(t)-e^{i\theta(t)}Q(\cdot-y(t))]$ satisfies
\begin{equation}\label{Ortogonality}
\< g_{2}(t), Q(\cdot-y(t)) \>=\< g_{1}(t), \partial_{x}Q(\cdot-y(t))\>\equiv 0.
\end{equation}
\end{lemma}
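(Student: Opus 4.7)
\textbf{Proof plan for Lemma \ref{ExistenceF}.} The plan is to apply the implicit function theorem to a map whose zeros encode precisely the orthogonality conditions \eqref{Ortogonality}. Define
\[
F:H^{1}(\R)\times \R\times \R\to \R^{2},\qquad
F(v,\theta,y)=\bigl(F_{1}(v,\theta,y),F_{2}(v,\theta,y)\bigr),
\]
where
\[
F_{1}(v,\theta,y)=\IM\<e^{-i\theta}v,\,Q(\cdot-y)\>,\qquad
F_{2}(v,\theta,y)=\RE\<e^{-i\theta}v,\,\partial_{x}Q(\cdot-y)\>.
\]
A direct computation using $Q\in\R$ shows that requiring $g=e^{-i\theta}u-Q(\cdot-y)$ to satisfy \eqref{Ortogonality} is equivalent to $F(u,\theta,y)=0$; here one uses $\<Q,\partial_{x}Q\>=\tfrac12\int\partial_{x}(Q^{2})\,dx=0$ to absorb the inhomogeneous term from $F_{2}$.

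Next I would verify that the Jacobian $\partial_{(\theta,y)}F$ is invertible at any reference point of the form $v_{0}=e^{i\theta_{0}}Q(\cdot-y_{0})$, $(\theta,y)=(\theta_{0},y_{0})$, at which $F=0$. A short calculation (using $Q\in\R$, integration by parts, and $\<Q,\partial_{x}Q\>=0$) gives the diagonal matrix
\[
\partial_{(\theta,y)}F\bigl(v_{0},\theta_{0},y_{0}\bigr)
=\begin{pmatrix}-\|Q\|_{L^{2}}^{2}&0\\[2pt]0&\|\partial_{x}Q\|_{L^{2}}^{2}\end{pmatrix},
\]
which is clearly invertible with norm and inverse norm independent of $(\theta_{0},y_{0})$. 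The implicit function theorem then yields $\epsilon_{1}>0$ and $C^{1}$ functions $\theta(v),y(v)$, defined for $v$ in the $H^{1}$-ball of radius $\epsilon_{1}$ around $e^{i\theta_{0}}Q(\cdot-y_{0})$, such that $F(v,\theta(v),y(v))=0$ and $(\theta(v),y(v))\to(\theta_{0},y_{0})$ as $v\to e^{i\theta_{0}}Q(\cdot-y_{0})$. Crucially, $\epsilon_{1}$ can be chosen uniformly in the base point: translation and phase rotation are isometries of $H^{1}$ and commute with the Jacobian computation, so the quantitative neighborhood provided by the implicit function theorem is the same for every $(\theta_{0},y_{0})\in\R^{2}$.

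To conclude, given any $\epsilon\in(0,\epsilon_{1})$, choose $\mu_{0}=\mu_{0}(\epsilon)$ as in Lemma \ref{LemmaMod}. Whenever $\mu(u(t))<\mu_{0}$, Lemma \ref{LemmaMod} furnishes $(\theta_{0}(t),y_{0}(t))$ with $\|u(t)-e^{i\theta_{0}(t)}Q(\cdot-y_{0}(t))\|_{H^{1}}<\epsilon<\epsilon_{1}$, and the uniform implicit function theorem produces $(\theta(t),y(t))$ close to $(\theta_{0}(t),y_{0}(t))$ such that $F(u(t),\theta(t),y(t))=0$; this gives simultaneously the orthogonality \eqref{Ortogonality} and, via continuity of the implicit functions at a zero, the smallness $\|u(t)-e^{i\theta(t)}Q(\cdot-y(t))\|_{H^{1}}\ll 1$ claimed in \eqref{Taylor} after shrinking $\mu_{0}$ if necessary.

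The main technical point to be careful about is the uniformity of the implicit function theorem in the two-parameter family of base points $(\theta_{0},y_{0})$; however, because the translation and phase symmetries act isometrically on $H^{1}$ and transform the map $F$ equivariantly, one really only needs to apply the implicit function theorem once (at $(\theta_{0},y_{0})=(0,0)$) and transport the resulting neighborhood by these symmetries, so no genuine obstacle arises. Measurability (and continuity) of $t\mapsto(\theta(t),y(t))$ follows from the continuity of $t\mapsto u(t)\in H^{1}(\R)$ and the $C^{1}$ regularity of the implicit functions.
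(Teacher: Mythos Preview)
Your argument is correct and is precisely the standard modulation construction the paper has in mind: the paper does not spell out the proof but refers to \cite[Lemma~5.3]{MiaMurphyZheng2021}, where the same implicit function theorem argument (orthogonality map, diagonal Jacobian at the modulated ground state, uniformity via the phase/translation isometries, then feeding in the crude closeness from Lemma~\ref{LemmaMod}) is carried out. Your computation of the Jacobian and the equivariance remark giving a uniform $\epsilon_{1}$ are exactly the points one needs.
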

\begin{proof}
The proof closely follows the proof of \cite[Lemma 5.3]{MiaMurphyZheng2021}
\end{proof}

By \eqref{Taylor}, a Taylor expansion gives 
\[
\begin{split}
S_{0}(u(t))-S_{0}(Q)=S_{0}(e^{-i\theta(t)}u(t))-S_{0}(Q(\cdot-y(t)))\\
=\< S^{\prime}_{0}(Q(\cdot-y(t)))[g(t)], g(t) \>
+\frac{1}{2}\< S^{\prime\prime}_{0}(Q(\cdot-y(t)))[g(t)], g(t)\>+
o(\|g\|^{2}_{H^{1}}).
\end{split}
\]
Since $S^{\prime}_{0}(Q(\cdot-y(t))=0$, we obtain
\begin{equation}\label{Tay22}
S_{0}(u(t))-S_{0}(Q)=\frac{1}{2}\< S^{\prime\prime}_{0}(Q)[g(t,\cdot+y(t) )], g(t, \cdot+y(t))\>+
o(\|g\|^{2}_{H^{1}})
\end{equation}

Notice that the operator $S^{\prime\prime}_{0}(Q)$ can be separated into a real and an imaginary part
$L_{1}$ and $L_{2}$ such that
\[
\< S^{\prime\prime}_{0}(Q)w, w)\>=\<L_{1}u,u\>+\<L_{2}v,v \>.
\]
where $w=u+iv\in H^{1}(\R)$. Here,  $L_{1}$ and $L_{2}$  are two bounded operators defined in $H^{1}(\R)$ given by
\begin{align*}
L_{1}u&=-\partial^{2}_{x}u+u-pQ^{p-1}u,\\
L_{2}u&=-\partial^{2}_{x}v+v-Q^{p-1}v.	
\end{align*}

Lemma 3.5 of \cite{CamposFarahRou2020} gives the following result.

\begin{lemma}\label{CoerS}
There exists $C>0$ such that for every $h=h_{1}+ih_{2}\in H^{1}(\R)$ satisfying
\begin{equation}\label{Qorto}
\<h_{1}, \partial_{x}Q\>=\<h_{1}, Q^{p}\>=\<h_{2}, Q\>=0
\end{equation}
we have
\[
\< S^{\prime\prime}_{0}(Q)[h], h\>\geq C \|h\|^{2}_{H^{1}}.
\]
\end{lemma}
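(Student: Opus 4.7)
The plan is to split the bilinear form $\langle S_0''(Q)[h], h \rangle$ into its real and imaginary parts, namely
\[
\langle S_0''(Q)[h], h \rangle = \langle L_1 h_1, h_1 \rangle + \langle L_2 h_2, h_2 \rangle,
\]
and to exploit the known spectral structure of the linearized operators $L_1$ and $L_2$ introduced just before the statement. Each summand is then shown to be coercive in $H^1$ under the corresponding orthogonality conditions.

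For the imaginary part, differentiating the ground state equation \eqref{Eqp} shows $L_2 Q = 0$. Since $Q > 0$, the function $Q$ is the ground state of the Schr\"odinger operator $L_2 = -\partial_x^2 + 1 - Q^{p-1}$ with eigenvalue zero, so $L_2 \geq 0$, $\ker L_2 = \text{span}\{Q\}$, and there is a spectral gap to the next eigenvalue (the essential spectrum starts at $1$). The orthogonality $\langle h_2, Q \rangle = 0$ then yields $\langle L_2 h_2, h_2 \rangle \gtrsim \|h_2\|_{L^2}^2$; combining this with
\[
\langle L_2 h_2, h_2 \rangle = \|\partial_x h_2\|_{L^2}^2 + \|h_2\|_{L^2}^2 - \int_{\R} Q^{p-1}|h_2|^2\, dx
\]
and the boundedness of $Q^{p-1}$ upgrades the estimate to $\gtrsim \|h_2\|_{H^1}^2$.

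For the real part, $L_1 = -\partial_x^2 + 1 - pQ^{p-1}$ has $\partial_x Q$ in its kernel (differentiating \eqref{Eqp}); by 1D Sturm oscillation theory $L_1$ has exactly one negative eigenvalue, with positive ground state $\chi$. The orthogonality $\langle h_1, \partial_x Q \rangle = 0$ removes the kernel direction, and the remaining task is to handle the single negative eigenvalue using $\langle h_1, Q^p \rangle = 0$. A direct computation using \eqref{Eqp} gives $L_1 Q = -(p-1)Q^p$, so this condition is equivalent to $\langle L_1 h_1, Q \rangle = 0$. A Lagrange multiplier analysis of the constrained infimum
\[
\inf\{ \langle L_1 h, h \rangle : \|h\|_{L^2}=1,\ h \perp \partial_x Q,\ h \perp Q^p\},
\]
together with the fact that $\langle Q, \chi\rangle > 0$ (both are positive), shows that this infimum is strictly positive and hence $\langle L_1 h_1, h_1 \rangle \gtrsim \|h_1\|_{L^2}^2$; once more, the explicit form of $L_1$ upgrades this to an $H^1$ bound, whose combination with the $L_2$ estimate yields the claim.

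The main obstacle is to establish the coercivity of $L_1$ under the specific orthogonality $\langle h_1, Q^p \rangle = 0$, which differs from the more classical Weinstein condition $\langle h_1, Q \rangle = 0$; this requires a careful analysis of the spectral decompositions of $Q$, and hence of $Q^p$ through the identity $L_1 Q = -(p-1)Q^p$, along the eigenspaces of $L_1$. This is exactly the content of \cite[Lemma 3.5]{CamposFarahRou2020}, whose argument transfers verbatim here since $L_1$, $L_2$, and the three orthogonality conditions do not involve the delta potential $\gamma\delta$: they arise purely from the linearization of the free action $S_0$ at the free ground state $Q$.
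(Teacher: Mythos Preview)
Your proposal is correct and aligns with the paper's treatment: the paper does not give an independent proof but simply attributes the result to \cite[Lemma~3.5]{CamposFarahRou2020}, which is exactly the reference you invoke after sketching the underlying spectral argument for $L_1$ and $L_2$. Your outline of the coercivity argument (kernel and negative-direction structure of $L_1$, positivity of $L_2$, and the upgrade from $L^2$ to $H^1$) is the standard one and matches what that reference contains.
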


Notice that the orthogonality condition is different from those in \cite{DuyLandRoude, MiaMurphyZheng2021}. In those, they assumes $\<h_{1}, \Delta Q\>=0$ instead of $\<h_{1}, Q^{p}\>=0$. As pointed in \cite{CamposFarahRou2020}, we use the orthogonality condition $\<h_{1}, Q^{p}\>=0$ in the one dimensional case.

\begin{lemma}\label{BoundI}Fix $\gamma<0$. Let $(\theta(t), y(t))$ and $g(t)$ be as in Lemma \ref{ExistenceF}. Then we have
\begin{equation}\label{DeltaBound}
-\gamma|u(t,0)|^{2}\lesssim \mu^{2}(t)\sim \|g(t)\|^{2}_{H^{1}}.
\end{equation}
\end{lemma}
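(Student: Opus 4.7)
The argument combines the conservation laws at the threshold with a Taylor expansion of the action $S_0$ about the translated ground state, followed by the spectral coercivity of Lemma \ref{CoerS}.

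First I extract the key identity from mass and energy conservation. Since $S_\gamma(u)=S_0(u)-\tfrac{\gamma}{2}|u(0)|^2$ and $S_\gamma(u_0)=E_0(Q)+\tfrac{1}{2}M(Q)=S_0(Q)$ by \eqref{22condition}, conservation of $S_\gamma$ gives
\[
S_0(u(t))-S_0(Q)=\tfrac{\gamma}{2}|u(t,0)|^2\le 0.
\]
Using $\|u\|_H^2=2E_\gamma(u)+\tfrac{2}{p+1}\|u\|_{L^{p+1}}^{p+1}$ together with \eqref{PIh} and \eqref{ELQ} rewrites
\[
\mu(t)=\tfrac{2}{p+1}\bigl[\|Q\|_{L^{p+1}}^{p+1}-\|u(t)\|_{L^{p+1}}^{p+1}\bigr].
\]
Setting $\tilde g(x):=g(t,x+y(t))$ and Taylor expanding $|Q(\cdot-y(t))+g(t)|^{p+1}$ to first order (the remainder is $O(\|g\|_{H^1}^2+\|g\|_{H^1}^{p+1})$ via the Sobolev embedding $H^1\hookrightarrow L^{p+1}$) yields
\[
\mu(t)=-2\langle Q^p,\tilde g_1\rangle+O(\|g(t)\|_{H^1}^2),
\]
so Cauchy--Schwarz delivers $\mu(t)^2\lesssim\|g(t)\|_{H^1}^2$.

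For the reverse bound, combining the conservation identity with the Taylor expansion \eqref{Tay22} of $S_0$ at the translated ground state gives
\[
\tfrac{\gamma}{2}|u(t,0)|^2=\tfrac{1}{2}\langle S_0''(Q)\tilde g,\tilde g\rangle+o(\|g(t)\|_{H^1}^2).
\]
The orthogonalities \eqref{Ortogonality} supply $\langle\tilde g_1,\partial_x Q\rangle=\langle\tilde g_2,Q\rangle=0$, but \emph{not} the third condition $\langle\tilde g_1,Q^p\rangle=0$ required by Lemma \ref{CoerS}. I therefore split
\[
\tilde g_1=\alpha Q^p+h_1,\qquad \langle h_1,Q^p\rangle=0,\qquad \alpha=\frac{\langle Q^p,\tilde g_1\rangle}{\|Q^p\|_{L^2}^2}.
\]
The first-order formula for $\mu$ above identifies $\alpha=-\mu/(2\|Q^p\|_{L^2}^2)+O(\|g\|_{H^1}^2)$, so $|\alpha|\lesssim|\mu|+\|g\|_{H^1}^2$. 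The parity $\int Q^p\partial_x Q\,dx=0$ gives $\langle h_1,\partial_x Q\rangle=0$, so $h:=h_1+i\tilde g_2$ satisfies all three orthogonalities of Lemma \ref{CoerS} and therefore $\langle S_0''(Q)h,h\rangle\ge C\|h\|_{H^1}^2$.

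Expanding
\[
\langle S_0''(Q)\tilde g,\tilde g\rangle=\alpha^2\langle S_0''(Q)Q^p,Q^p\rangle+2\alpha\langle S_0''(Q)Q^p,h_1\rangle+\langle S_0''(Q)h,h\rangle,
\]
absorbing the cross term with Young's inequality, and using the triangle bound $\|h\|_{H^1}^2\ge\tfrac{1}{2}\|\tilde g\|_{H^1}^2-C\alpha^2$ together with $\alpha^2\lesssim\mu^2+\|g\|_{H^1}^4$, I obtain
\[
\langle S_0''(Q)\tilde g,\tilde g\rangle\ge c\,\|g(t)\|_{H^1}^2-C\bigl(\mu(t)^2+\|g(t)\|_{H^1}^4\bigr).
\]
Plugging this into the Taylor identity, using $\gamma|u(t,0)|^2\le 0$, and choosing $\mu_0$ small enough (via Lemma \ref{ExistenceF}) that the $\|g\|_{H^1}^4$ and $o(\|g\|_{H^1}^2)$ errors are absorbed by the coercive leading term, I conclude $\|g(t)\|_{H^1}^2\lesssim\mu(t)^2$; combined with the previous inequality this yields $\mu(t)^2\sim\|g(t)\|_{H^1}^2$. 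The bound $-\gamma|u(t,0)|^2\lesssim\mu^2$ then follows from the trivial upper estimate $|\langle S_0''(Q)\tilde g,\tilde g\rangle|\lesssim\|g\|_{H^1}^2\lesssim\mu^2$ applied to the Taylor identity. The main obstacle is precisely this coercivity step: since modulation supplies only two orthogonalities, one must leverage the mass--energy threshold hypothesis to quantitatively identify the projection of $\tilde g_1$ onto the ``bad'' direction $Q^p$ with the small quantity $\mu$ itself.
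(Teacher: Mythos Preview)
Your proof is correct and reaches the same conclusion, but via a genuinely different decomposition than the paper. The paper writes $g=\lambda Q(\cdot-y)+h$ with $\lambda=\langle g_1(\cdot+y),Q^p\rangle/\langle Q,Q^p\rangle$; because $L_1Q=(1-p)Q^p$, the cross term $\langle L_1Q,h_1(\cdot+y)\rangle$ vanishes \emph{exactly}, giving the clean bound $\|h\|_{H^1}^2-\gamma|u(0)|^2\lesssim\lambda^2+o(\|g\|_{H^1}^2)$. The paper then expands $M(u)=M(Q)$ to extract $|\lambda|\lesssim\|h\|_{H^1}$, and separately computes $\mu=-4\lambda\|\partial_xQ\|_{L^2}^2+O(\lambda^2)$ straight from the definition of $\mu$. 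You instead project the real part onto $Q^p$ (writing $\tilde g_1=\alpha Q^p+h_1$), which forces you to control the cross term $\langle L_1Q^p,h_1\rangle$ by Young's inequality rather than by cancellation; in exchange, your identity $\mu=-2\langle Q^p,\tilde g_1\rangle+O(\|g\|_{H^1}^2)$, obtained from the $L^{p+1}$ expansion of $\mu$, ties $\alpha$ to $\mu$ immediately, so you bypass the separate mass-expansion and gradient-expansion steps. Both routes are valid: the paper's is somewhat cleaner because of the exact orthogonality $L_1Q\perp h_1$, while yours makes the link between the ``bad'' projection and $\mu$ transparent from the start.
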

\begin{proof}
Since $S_{\gamma}(u(t))=S_{0}(Q)$, from \eqref{Tay22} we get
\begin{equation}\label{Aprox}
0=\frac{1}{2}\< S^{\prime\prime}_{0}(Q)[g(\cdot+y)], g(\cdot+y)\>-\frac{\gamma}{2}|u(t,0)|^{2}+
o(\|g\|^{2}_{H^{1}}).
\end{equation}
We decompose $g(t)$ as follows
\begin{equation}\label{Defg}
g=\lambda Q(\cdot-y)+h, \quad \text{where} \quad \lambda=\frac{\<g_{1}(\cdot+y), Q^{p}\>}{\<Q, Q^{p}\>}.
\end{equation}
It is clear that $\lambda\in \R$. Moreover, from \eqref{Taylor} and definition of $\lambda$ given in \eqref{Defg} we get
\begin{equation}\label{BoundR}
|\lambda|\lesssim \|g\|_{H^{1}}\ll 1.
\end{equation}
Now, by \eqref{Ortogonality} we deduce
\begin{equation}\label{Orto22}
\< h_{2}(\cdot+y), Q \>=\< h_{1}(\cdot+y), \partial_{x}Q\>=\< h_{1}(\cdot+y), Q^{p}\>=0.
\end{equation}
Then, Lemma \ref{CoerS} implies
\[
\< S^{\prime\prime}_{0}(Q)[h(\cdot+y)], h(\cdot+y)\> \gtrsim \|h\|^{2}_{H^{1}}.
\]
Combining estimate above and \eqref{Aprox} we obtain
\[
\|h\|^{2}_{H^{1}}-\gamma|u(t,0)|^{2}\lesssim \lambda^{2}+|\lambda\<L_{1}Q, h_{1}(\cdot+y) \>|+
o(\|g\|^{2}_{H^{1}}).
\]
Notice that \eqref{Orto22} and \eqref{Eqp} implies $\<-\partial^{2}_{x}Q, h_{1}(\cdot+y)\>=-\<Q, h_{1}(\cdot+y)\>$. Thus,
by definition of $L_{1}$, we see that $\<L_{1}Q, h_{1}(\cdot+y)\>=0$. Therefore, inequality above shows
\[
\|h\|^{2}_{H^{1}}-\gamma|u(t,0)|^{2}\lesssim \lambda^{2}+o(\|h\|^{2}_{H^{1}}).
\]
Thus,
\begin{equation}\label{BoundV}
\|h\|^{2}_{H^{1}}\lesssim \lambda^{2}
\quad \text{and}\quad 
-\gamma|u(t,0)|^{2}\lesssim \lambda^{2}.
\end{equation}
On the other hand, as $M(Q)=M(u)$ \eqref{Defg} implies
\begin{equation}\label{ExpanE}
\lambda^{2}\|Q\|^{2}_{L^{2}}+2\lambda\|Q\|^{2}_{L^{2}}+2\<Q(\cdot-y), h_{1}\>+\|h\|^{2}_{L^{2}}=0.
\end{equation}
We observe that by \eqref{BoundR}, \eqref{BoundV} and \eqref{ExpanE} we obtain
\begin{equation}\label{BoundH}
|\lambda| \lesssim \|h\|_{H^{1}},
\end{equation}
so that 
\begin{align}\label{aproxG}
&\|g\|_{H^{1}}\sim |\lambda|	\sim \|h\|_{H^{1}},\\ \label{aproxProduct}
&\<Q(\cdot-y), h_{1}\>=-\lambda\|Q\|^{2}_{L^{2}}+\O(\lambda^{2}).
\end{align}
Finally, since $\<\partial_{x}Q(\cdot-y), \partial_{x}h_{1} \>=-\<Q(\cdot-y), h_{1} \>$,  
combining \eqref{BoundV}, \eqref{aproxG} and 
\eqref{aproxProduct} we get
\begin{align*}
	\mu(t)&=\|\partial_{x}Q\|^{2}_{L^{2}}-\|\partial_{x}[(1+\lambda)Q(\cdot-y)+h]\|^{2}_{L^{2}}+
	\gamma|u(t,0)|^{2}\\
	&=-4\lambda \|\partial_{x}Q\|^{2}_{L^{2}}+\O(\lambda^{2}),
\end{align*}
which proves that $\mu \sim |\lambda|$. Hence $\mu(t) \sim \|g(t)\|_{H^{1}}$.
This completes the proof of lemma.
\end{proof}

\begin{lemma}\label{BoundEx}
Under the conditions of Lemma \ref{BoundI}, if $\mu_{0}$ is sufficiently small, then
\begin{equation}\label{EquaEx22}
\frac{e^{-2|y(t)|}}{|y(t)|^{2}}\lesssim\mu(t) \quad \text{for all $t\in I_{0}$}.
\end{equation}
\end{lemma}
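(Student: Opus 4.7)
The plan is to evaluate the modulation decomposition \eqref{DecomU} at $x=0$ and exploit that $Q$ is even, so that $Q(-y(t))=Q(y(t))$. This gives
\begin{equation*}
u(t,0)=e^{i\theta(t)}\bigl[g(t,0)+Q(y(t))\bigr],
\end{equation*}
and hence by the triangle inequality
\begin{equation*}
|Q(y(t))|\leq |u(t,0)|+|g(t,0)|.
\end{equation*}
I will then bound both terms on the right-hand side by $\mu(t)$.

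For the first term, Lemma \ref{BoundI} directly yields $|u(t,0)|\lesssim (-\gamma)^{-1/2}\mu(t)\lesssim\mu(t)$, since $\gamma<0$ is a fixed constant. For the second, the one-dimensional Sobolev embedding $H^{1}(\R)\hookrightarrow L^{\infty}(\R)$ together with \eqref{DeltaBound} gives
\begin{equation*}
|g(t,0)|\leq \|g(t)\|_{L^{\infty}_{x}}\lesssim \|g(t)\|_{H^{1}}\sim\mu(t).
\end{equation*}
Combining these two estimates, $|Q(y(t))|\lesssim\mu(t)$.

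Finally, I would invoke the sharp exponential decay of the 1D ground state. Since $Q$ is positive, even, and satisfies the ODE \eqref{Eqp} with $Q,Q^{\prime}\to 0$ at infinity, a standard ODE analysis (comparing with the linearization $Q^{\prime\prime}=Q$, whose decaying solution is $e^{-|x|}$, or using the explicit $\cosh^{-2/(p-1)}$ representation of the soliton) shows $Q(x)\sim C_{\infty}\,e^{-|x|}$ as $|x|\to\infty$. In particular, there exist $R_{0}>0$ and $c>0$ such that $Q(x)\geq c\,e^{-|x|}$ for $|x|\geq R_{0}$. By Remark \ref{Yinfinity}, choosing $\mu_{0}$ sufficiently small forces $|y(t)|\geq R_{0}$ on $I_{0}$, and therefore $e^{-|y(t)|}\lesssim\mu(t)$. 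Using $|y(t)|\geq R_{0}$ one more time,
\begin{equation*}
\frac{e^{-2|y(t)|}}{|y(t)|^{2}}\leq R_{0}^{-2}\,e^{-2|y(t)|}\lesssim\mu(t)^{2}\lesssim\mu(t),
\end{equation*}
since $\mu(t)\leq\mu_{0}\ll 1$; this is \eqref{EquaEx22}.

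The argument is quite direct, being essentially the delta-potential analog of evaluating the ansatz at the origin. The only point requiring genuine care is the sharp lower bound $Q(x)\gtrsim e^{-|x|}$ for $|x|$ large, which is standard for the 1D ground state but does require the ODE (or explicit soliton) representation rather than merely the qualitative statement that $Q$ decays exponentially; note in particular that the polynomial factor $|y(t)|^{-2}$ in the target estimate is harmless and in fact absorbed for free once this sharp exponential lower bound is available.
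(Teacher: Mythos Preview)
Your proof is correct and follows essentially the same approach as the paper: evaluate the decomposition at $x=0$ to bound $|Q(y(t))|$ in terms of $\mu(t)$, then use an exponential lower bound on $Q$ together with Remark~\ref{Yinfinity}. The only cosmetic differences are that the paper passes through the finer decomposition $g=\lambda Q(\cdot-y)+h$ from Lemma~\ref{BoundI} rather than using $g$ directly, and it invokes the weaker bound $Q(x)\gtrsim |x|^{-1}e^{-|x|}$ (which is the source of the $|y(t)|^{-2}$ factor in the statement), whereas you use the sharper $Q(x)\gtrsim e^{-|x|}$ and then discard the polynomial factor at the end.
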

\begin{proof}
Using \eqref{Defg} yields that
\[
|u(t, 0)|^{2}=|Q(y(t))|^{2}+(\lambda^{2}+2\lambda)|Q(y(t))|^{2}+|h(t,0)|^{2}+2Q(y(t))h_{1}(t,0).
\]
Thus, by \eqref{BoundV} and \eqref{aproxG}  we obtain that
\begin{equation}\label{BoundQ}
|Q(y(t))|^{2}\lesssim [\mu(t)]^{2}+\mu(t)\lesssim\mu(t).
\end{equation}
On the other hand, it is well known that 
\[
|Q(x)|\gtrsim |x|^{-1}e^{-|x|}\quad \text{for $|x|\geq 1$}.
\]
From \eqref{BoundQ} and according Remark \ref{Yinfinity} we have that if $\mu_{0}$ is sufficiently small, then
\[
\frac{e^{-2|y(t)|}}{|y(t)|^{2}}\lesssim |Q(y(t))|^{2}\lesssim \mu(t), \quad \text{for all $t\in I_{0}$}.
\]
Thus, we obtain that \eqref{EquaEx22} is true.
\end{proof}

\begin{lemma}\label{BoundEx22}
Under the conditions of Lemma \ref{BoundI}, we have
\begin{equation}\label{EquaEx}
|y^{\prime}(t)|\lesssim \mu(t) \quad \text{for all $t\in I_{0}$}.
\end{equation}
\end{lemma}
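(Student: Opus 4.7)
The strategy is a standard modulation argument, with the delta potential contribution controlled by the bound from Lemma \ref{BoundI}. First, substitute the decomposition $u(t,x) = e^{i\theta(t)}[Q(x-y(t)) + g(t,x)]$ into \eqref{NLS}. Using $\partial_x^2 Q = Q - Q^p$ together with the expansion $|Q(\cdot-y)+g|^{p-1}(Q(\cdot-y)+g) = Q^p(\cdot-y) + pQ^{p-1}(\cdot-y)g_1 + iQ^{p-1}(\cdot-y)g_2 + N(g)$, with $N(g) = O(|g|^2)$ pointwise, one obtains the evolution equation
\[
i\partial_t g = -\partial_x^2 g - pQ^{p-1}(\cdot-y)g_1 - iQ^{p-1}(\cdot-y)g_2 + (\theta'(t)-1)[Q(\cdot-y)+g] + iy'(t)\partial_x Q(\cdot-y) - \gamma\delta(x)[Q(\cdot-y)+g] - N(g).
\]
Splitting into real and imaginary parts yields formulas for $\partial_t g_1$ and $-\partial_t g_2$ in terms of $L_1(y)g_1$, $L_2(y)g_2$, $(\theta'-1)$, $y'$, the delta contribution, and $N(g)$.

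Next, differentiate the two orthogonality conditions from \eqref{Ortogonality} in $t$:
\[
\<\partial_t g_1,\partial_xQ(\cdot-y)\> - y'\<g_1,\partial_x^2 Q(\cdot-y)\> = 0, \qquad \<\partial_t g_2,Q(\cdot-y)\> - y'\<g_2,\partial_x Q(\cdot-y)\> = 0.
\]
Substituting the expressions for $\partial_t g_1$ and $\partial_t g_2$ derived above produces a $2\times 2$ linear system for $(\theta'(t)-1, y'(t))$. The diagonal entries are $\|Q\|_{L^2}^2$ and $\|\partial_x Q\|_{L^2}^2$ up to corrections of size $\|g\|_{H^1}$, while the off-diagonal entries are $O(\|g\|_{H^1}) = O(\mu(t))$, using the fact that $\<g_1,Q(\cdot-y)\> = O(\mu^2)$ by \eqref{aproxProduct}. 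Hence, for $\mu_0$ small, this matrix is invertible with uniformly bounded inverse.

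It remains to bound the right-hand sides by $\mu(t)$. The contributions of $L_1(y)g_1$ and $L_2(y)g_2$ paired against $Q(\cdot-y)$ or $\partial_x Q(\cdot-y)$ are controlled by $\|g\|_{H^1} \sim \mu(t)$, using self-adjointness of $L_1, L_2$, the identities $L_1 Q = -(p-1)Q^p$ and $L_1 \partial_x Q = 0$ (so $L_2 \partial_x Q = (p-1)Q^{p-1}\partial_x Q$), and the equivalence $\|g\|_{H^1}\sim\mu$ from Lemma \ref{BoundI}. The nonlinear term $N(g)$ contributes $O(\|g\|_{H^1}^2) = O(\mu^2)$. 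The delta potential term contributes expressions of the form $-\gamma[Q(-y)+g_1(0)]Q(-y)$ and $-\gamma g_2(0)\partial_x Q(-y)$, which are bounded by
\[
|\gamma|\,|Q(-y)|^{2} + |\gamma|^{1/2}[-\gamma|u(t,0)|^{2}]^{1/2}\,|Q(-y)|\lesssim \mu(t)
\]
by combining Lemma \ref{BoundI} (i.e., $[-\gamma|u(t,0)|^{2}]^{1/2}\lesssim\mu(t)$) with the bound $|Q(-y(t))|^{2}\lesssim\mu(t)$ from \eqref{BoundQ}, together with the fact that $|\partial_x Q(-y)| \lesssim |Q(-y)|\lesssim \mu^{1/2}$.

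Inverting the $2\times 2$ system then yields $|\theta'(t)-1| + |y'(t)| \lesssim \mu(t)$, which gives in particular \eqref{EquaEx}. The main obstacle is precisely the appearance of the delta-potential terms in the evolution equation for $g$; these are new compared to the classical modulation analysis in \cite{DuyLandRoude, MiaMurphyZheng2021} and are handled by invoking the key bound $-\gamma|u(t,0)|^{2}\lesssim\mu(t)^{2}$ already established in Lemma \ref{BoundI}.
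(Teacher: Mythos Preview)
Your proposal is correct and follows essentially the same modulation strategy as the paper: derive the evolution equation for $g$, test against $Q(\cdot-y)$ and $\partial_x Q(\cdot-y)$, exploit the differentiated orthogonality relations \eqref{Ortogonality}, and control the delta-potential contributions via Lemma~\ref{BoundI} and \eqref{BoundQ}. The only organizational difference is that you assemble and invert an explicit $2\times 2$ system for $(\theta'-1,y')$, obtaining in passing the sharper bound $|\theta'(t)-1|\lesssim\mu(t)$, whereas the paper proceeds by a two-step bootstrap, first showing $|\theta'|\lesssim 1+|y'|\|g\|_{H^1}$ by pairing with $Q(\cdot-y)$, then $|y'|\lesssim(1+|\theta'|)\|g\|_{H^1}$ by pairing with $\partial_xQ(\cdot-y)$, and closing by absorption since $\|g\|_{H^1}\ll 1$.
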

\begin{proof}
We recall that $g(t)=e^{-i\theta(t)}[u(t)-e^{i\theta(t)}Q(\cdot-y(t))]$. 
Notice that
\begin{equation}\label{EquationG}
\begin{split}
i\partial_{t}g+\Delta g-\theta^{\prime}g
+Q(x-y)-\theta^{\prime}Q(x-y)-y^{\prime}\partial_{x}Q(x-y)\\
-\gamma\delta(x)(e^{-i\theta}u)+[f(g-Q(\cdot-y))-f(Q(\cdot-y))]=0,
\end{split}
\end{equation}
where $f(z)=|z|^{p-1}z$.
We claim that
\begin{equation}\label{Boundteta}
|\theta^{\prime}(t)|\lesssim 1+|y^{\prime}(t)|\|g(t)\|_{H^{1}}.
\end{equation}
Indeed, multiplying \eqref{EquationG} with $Q(\cdot-y)$, integrating on $\R$, using the orthogonality 
conditions \eqref{Ortogonality}, estimate \eqref{DeltaBound}
and taking the real part we have (recall that $\|g(t)\|_{H^{1}}\ll 1$)
\begin{equation}\label{QuasiBound}
|\theta^{\prime}(t)|\lesssim_{Q} |\RE\<i\partial_{t}g, Q(\cdot-y)\>|+|\theta^{\prime}(t)|\|g(t)\|_{H^{1}}+  1+\O(\|g(t)\|_{H^{1}}).
\end{equation}
Here we have used the inequality
\[
|f(g-Q(\cdot-y))-f(Q(\cdot-y))|\lesssim|Q(\cdot-y)|^{p-1}|g|+|g|^{p}.
\]
Next, since  (see \eqref{Ortogonality}) 
\[
\frac{d}{dt}\IM\< g(t), Q(\cdot-y(t)) \>=0
\]
we obtain that
\begin{equation}
\begin{split}
\label{FinalEstimate}
|\RE\<i\partial_{t}g, Q(\cdot-y) \>|=|\IM \<\partial_{t}g, Q(\cdot-y) \>|
=|y^{\prime}(t)\IM\< g(t), \partial_{x}Q(\cdot-y) \>|\\
\lesssim |y^{\prime}(t)|\|g(t)\|_{H^{1}}
\end{split}
\end{equation}
By using the fact that $\|g(t)\|_{H^{1}}\ll 1$, combining \eqref{QuasiBound} and \eqref{FinalEstimate} we get \eqref{Boundteta}.

Now we obtain \eqref{EquaEx}. Indeed, multiplying \eqref{EquationG} by $\partial_{x}Q(\cdot-y(t))$, integration over $\R$,
and using \eqref{DeltaBound} we see that 
\begin{equation}\label{Yprime11}
|y^{\prime}(t)|\lesssim |\IM\< i\partial_{t}g, \partial_{x}Q(\cdot-y) \>|+(1+|\theta^{\prime}(t)|)\|g(t)\|_{H^{1}}.
\end{equation}
Moreover, orthogonality conditions \eqref{Ortogonality} implies
\[
\begin{split}
|\IM\<i\partial_{t}g, \partial_{x}Q(\cdot-y) \>|=|\RE \<\partial_{t}g, \partial_{x}Q(\cdot-y) \>|
=|y^{\prime}(t)\RE\< g(t), \partial_{x}Q(\cdot-y) \>|\\
\lesssim |y^{\prime}(t)|\|g(t)\|_{H^{1}}
\end{split}
\]
As $\|g(t)\|_{H^{1}} \sim \mu(t)$, by inequality above, \eqref{Yprime11} and \eqref{Boundteta} we obtain \eqref{EquaEx}.
This completes the proof of lemma.
\end{proof}

\begin{proof}[{Proof of Proposition \ref{Modilation11}}]
Putting together Lemma \ref{ExistenceF} with the bounds \eqref{DeltaBound},  \eqref{EquaEx22} and \eqref{EquaEx}, the result
 quickly follows.
\end{proof}

\section{Precluding the compact solution}\label{S:Impossibility}

Throughout this section we assume that $u$ is the solution constructed in Proposition \ref{Criticalsolution}.
We recall that $u$ satisfies the following properties:
\[
E_{\gamma}(u_{0})=E_{0}(Q), \quad M(u_{0})=M(Q),
P_{\gamma}(u_{0})\geq 0
\quad \text{and}\quad 
\|u \|_{L_{t}^{a}L^{r}_{x}([0, \infty)\times\R)}=\infty.
\]
Moreover, 
\[
\text{$\left\{u(t, \cdot+x_{0}(t)):t\in [0, \infty)\right\}$ is pre-compact in $H^{1}(\R)$.}
\]

In this section, we will apply the localized Virial identities and Proposition \ref{Modilation11} (Modulation theory)
to show the impossibility of the 'compact' solution $u$  established above.

Before proving Theorem \ref{Th2}, some preliminaries are needed.
We follow mainly \cite{MiaMurphyZheng2021} here, and the proof for the next lemma is essentially the same as in \cite[Lemma 4.2]{MiaMurphyZheng2021}, thus, we omit it.

\begin{lemma}\label{Parametrization}
If $\mu_{0}$ is sufficiently small, then there exists $C>0$ such that
\[
|x_{0}(t)-y(t)|<C \quad \text{for  $t\in I_{0}$}.
\]
Here, the parameter $y(t)$ is given in Proposition \ref{Modilation11}.
\end{lemma}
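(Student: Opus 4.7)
The plan is to argue by contradiction: suppose there exists a sequence $\{t_n\} \subset I_0$ such that $|z_n| := |x_0(t_n) - y(t_n)| \to \infty$. Since $\{u(t,\cdot + x_0(t)) : t \in [0,\infty)\}$ is pre-compact in $H^1(\R)$, after passing to a subsequence we obtain $u(t_n, \cdot + x_0(t_n)) \to \varphi$ strongly in $H^1(\R)$ for some $\varphi \in H^1(\R)$. Strong convergence in $L^2$ together with mass conservation gives $\|\varphi\|_{L^2}^2 = M(u_0) = M(Q)$, which is a strictly positive quantity independent of $\mu_0$.

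On the other hand, since $t_n \in I_0$, Proposition~\ref{Modilation11} provides the decomposition
\[
u(t_n, \cdot + x_0(t_n)) = e^{i\theta(t_n)}\bigl[Q(\cdot + z_n) + h_n\bigr],
\qquad h_n(x) := g(t_n, x + x_0(t_n)),
\]
with $\|h_n\|_{H^1} = \|g(t_n)\|_{H^1} \lesssim \mu_0$. Extracting a further subsequence I may assume $e^{i\theta(t_n)} \to e^{i\theta^*}$ for some $\theta^* \in \R$ and $h_n \rightharpoonup h^*$ weakly in $H^1(\R)$ with $\|h^*\|_{H^1} \lesssim \mu_0$. Since $|z_n| \to \infty$, the translates $Q(\cdot + z_n)$ converge weakly to zero in $H^1(\R)$.

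Passing to the weak limit in the decomposition above and using the strong (hence weak) convergence $u(t_n, \cdot + x_0(t_n)) \to \varphi$, I obtain $\varphi = e^{i\theta^*} h^*$, and therefore
\[
\|\varphi\|_{L^2} \leq \|h^*\|_{L^2} \lesssim \mu_0.
\]
Combined with $\|\varphi\|_{L^2}^2 = M(Q) > 0$, choosing $\mu_0$ sufficiently small yields the contradiction, proving that $|x_0(t) - y(t)|$ must remain bounded on $I_0$.

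The only delicate point is the passage to weak limits: one must justify that both the phase sequence $\{e^{i\theta(t_n)}\}$ and the remainders $\{h_n\}$ admit suitable limit points and that $Q(\cdot + z_n)$ indeed tends weakly to zero. The first is handled by compactness of the unit circle and of bounded sets in $H^1$ under the weak topology; the second follows from the decay of $Q$ at infinity together with density of compactly supported test functions in $H^{-1}$. No new estimates are needed beyond those already provided by Proposition~\ref{Criticalsolution} and Proposition~\ref{Modilation11}.
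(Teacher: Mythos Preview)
Your proof is correct. The paper itself omits the argument, referring to \cite[Lemma 4.2]{MiaMurphyZheng2021}, and your contradiction argument---pre-compactness forces a strong $H^1$ limit $\varphi$ with full mass $M(Q)$, while divergence of $|x_0(t_n)-y(t_n)|$ together with the modulation bound $\|g(t_n)\|_{H^1}\lesssim\mu_0$ forces $\|\varphi\|_{L^2}\lesssim\mu_0$---is exactly the standard route used in that reference.
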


As a consequence of the Lemma \ref{Parametrization} we obtain that
\begin{equation}\label{CompacNew}
\text{$\left\{u(t, \cdot+x(t))\right\}$ is pre-compact in $H^{1}(\R)$}
\end{equation}
where the spatial center is given by
\[x(t)=
\begin{cases}
x_{0}(t)& \quad t\in [0, \infty)\setminus I_{0},\\
y(t)&\quad t\in I_{0}.
\end{cases}
\]

\begin{lemma}\label{SequenceInf}
For any sequence $\left\{t_{n}\right\}\subset [0, \infty)$, we have
\begin{equation}\label{ZeroPoten}
|x(t_{n})|\to \infty \quad \text{if and only if}\quad |u(t_{n},0)|^{2}\to 0.
\end{equation}
\end{lemma}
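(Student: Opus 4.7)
The plan rests on combining the precompactness of $\{u(t,\cdot+x(t)):t\ge 0\}$ in $H^{1}(\R)$ with the one-dimensional Sobolev embedding $H^{1}(\R)\hookrightarrow C_{0}(\R)$. For the forward implication, given $|x(t_{n})|\to\infty$, I extract a subsequence along which $u(t_{n},\cdot+x(t_{n}))\to\varphi$ in $H^{1}$ and hence in $L^{\infty}$. Evaluating at $y=-x(t_{n})$,
\[
|u(t_{n},0)|\le\|u(t_{n},\cdot+x(t_{n}))-\varphi\|_{L^{\infty}}+|\varphi(-x(t_{n}))|\longrightarrow 0,
\]
because $\varphi\in C_{0}(\R)$ and $|-x(t_{n})|\to\infty$; a standard subsequence argument propagates this to the whole sequence.

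For the converse I argue by contrapositive. Assume $|x(t_{n})|$ stays bounded along a subsequence, extracting further so that $x(t_{n_{k}})\to x_{\ast}\in\R$ and, by precompactness, $u(t_{n_{k}},\cdot+x(t_{n_{k}}))\to\varphi$ in $H^{1}$. Then $u(t_{n_{k}})\to\tilde\varphi:=\varphi(\cdot-x_{\ast})$ in $H^{1}$ and $u(t_{n_{k}},0)\to\tilde\varphi(0)$. I will show $\tilde\varphi(0)\ne 0$, which contradicts $u(t_{n},0)\to 0$. Suppose on the contrary $\tilde\varphi(0)=0$. By conservation of mass and energy, continuity of $M$, $E_{\gamma}$, $P_{\gamma}$ on $H^{1}$, and \eqref{PositiveP},
\[
M(\tilde\varphi)=M(Q),\qquad E_{\gamma}(\tilde\varphi)=E_{0}(Q),\qquad P_{\gamma}(\tilde\varphi)\ge 0.
\]
The vanishing $\tilde\varphi(0)=0$ kills every delta contribution, so $E_{\gamma}(\tilde\varphi)=E_{0}(\tilde\varphi)$, $P_{\gamma}(\tilde\varphi)=P_{0}(\tilde\varphi)$, and $\|\tilde\varphi\|_{H}=\|\partial_{x}\tilde\varphi\|_{L^{2}}$. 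Inserting these into the identity \eqref{eq1.2} (evaluated with $\gamma=0$) and applying \eqref{ELQ} gives
\[
P_{0}(\tilde\varphi)=\tfrac{p-5}{4}\bigl(\|\partial_{x}Q\|_{L^{2}}^{2}-\|\partial_{x}\tilde\varphi\|_{L^{2}}^{2}\bigr)\ge 0,
\]
so $\|\partial_{x}\tilde\varphi\|_{L^{2}}\le\|\partial_{x}Q\|_{L^{2}}$.

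Two subcases remain. If equality holds, then $P_{\gamma}(\tilde\varphi)=0$ and $S_{\gamma}(\tilde\varphi)=E_{0}(Q)+\tfrac{1}{2}M(Q)=S_{0}(Q)=d_{\gamma}$, so $\tilde\varphi$ attains the infimum \eqref{dinf}, contradicting Lemma~\ref{minimumS}. In the strict case $\|\partial_{x}\tilde\varphi\|_{L^{2}}<\|\partial_{x}Q\|_{L^{2}}$, the datum $\tilde\varphi$ lies at the free-NLS mass-energy threshold with $P_{0}(\tilde\varphi)>0$; by the free-NLS threshold scattering result of \cite{CamposFarahRou2020}, the associated free-NLS solution $u^{(0)}$ is global with finite $L_{t}^{a}L_{x}^{r}$-norm. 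Treating the delta-potential term as a perturbation whose size is controlled by $u^{(0)}(t,0)$ (initially zero and decaying via the scattering of $u^{(0)}$), Lemma~\ref{perturb} transfers the $L_{t}^{a}L_{x}^{r}$-bound first to the NLS$+\delta$ solution $\tilde u$ starting from $\tilde\varphi$, and then, via the $H^{1}$-convergence $u(t_{n_{k}})\to\tilde\varphi$, to $u(t_{n_{k}}+\cdot)$. This yields $\|u\|_{L_{t}^{a}L_{x}^{r}([0,\infty)\times\R)}<\infty$, contradicting \eqref{Blowup}. The strict subcase $P_{0}(\tilde\varphi)>0$ is the main technical obstacle: it demands importing the free-NLS threshold scattering and handling the delta potential as a localized source, both of which go beyond the purely variational rejection used in the equality subcase.
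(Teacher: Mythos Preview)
Your forward direction is correct and essentially identical to the paper's argument via precompactness and Sobolev embedding.

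For the converse, the paper is far more direct: once $u(t_{n_k})\to f$ in $H^{1}$ with $f(0)=0$, the conservation laws give $M(f)=M(Q)$ and $E_{0}(f)=E_{0}(Q)$, and the paper immediately concludes that $f=e^{i\theta}Q(\cdot-x_{0})$, whence $|f(0)|=Q(x_{0})>0$, a contradiction. Your equality subcase via Lemma~\ref{minimumS} is a correct variational argument of the same flavor.

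Your strict subcase, however, has two genuine gaps. First, the threshold result of \cite{CamposFarahRou2020} does \emph{not} assert that every free-NLS solution with $M=M(Q)$, $E_{0}=E_{0}(Q)$ and $P_{0}>0$ has finite $L_{t}^{a}L_{x}^{r}$-norm: at the threshold there are special solutions (the $Q^{-}$ type) that scatter in only one time direction, so $\|u^{(0)}\|_{L_{t}^{a}L_{x}^{r}([0,\infty)\times\R)}$ may be infinite. Second, and more seriously, the perturbation step from $u^{(0)}$ to the NLS$+\delta$ solution $\tilde u$ cannot be carried out through Lemma~\ref{perturb} as you describe. The formal error $\gamma\delta(x)u^{(0)}$ is a distribution, not an element of $L_{t}^{a}L_{x}^{r}$, and the single condition $\tilde\varphi(0)=0$ does not propagate to $u^{(0)}(t,0)=0$ for $t\neq 0$. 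The mechanism that makes such a comparison work elsewhere (see the proof of Lemma~\ref{DeltaZero} and \cite[Proposition~3.4]{BaniVisci2016}) is a spatial translation by $x(t_{n})$ with $|x(t_{n})|\to\infty$, which pushes the profile far from the origin so that the difference between $e^{-itH_{\gamma}}$ and $e^{it\partial_{x}^{2}}$ becomes negligible in the relevant norms. In your contrapositive setup the spatial center $x(t_{n_{k}})\to x_{\ast}$ stays bounded, so this device is unavailable and the argument does not close.
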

\begin{proof}
Assume that  $|x(t_{n})|\to \infty$ and there exists $\epsilon>0$ such that $|u(t_{n},0)|\geq \epsilon$ for every $n\in \N$. Using \eqref{CompacNew} we see that there exists $f\in H^{1}(\R)$ such that, possibly for a subsequence only,
\[
\|u(t_{n})-f(\cdot-x(t_{n}))\|_{H^{1}}\to 0 \quad \text{as $n\to \infty$}.
\]
Therefore,
\[
\lim_{n\to\infty}|f(-x(t_{n}))|\geq \frac{\epsilon}{2},
\]
which is a contradiction because $|x(t_{n})|\to \infty$. This proves the first implication of lemma.

Now, assume that $ |u(t_{n},0)|^{2}\to 0$ but (possibly for a subsequence only) $\left\{x(t_{n})\right\}$ converges. 
By \eqref{CompacNew} we infer that there exists $f\in H^{1}(\R)$ such that
\begin{equation}\label{contra11}
u(t_{n})\to f\quad \text{in $H^{1}(\R)$ with }\quad |f(0)|=0.
\end{equation}
Next, since $ |u(t_{n},0)|^{2}\to 0$, by using \eqref{22condition} and \eqref{contra11} we get
\[
E_{0}(f)=E_{0}(Q) \quad \text{and}\quad M(f)=M(Q).
\]
But then  there exists $\theta$ and $x_{0}\in \R$ such that $f(x)=e^{i \theta}Q(x-x_{0})$.
In particular, $|f(0)|=Q(x_{0})>0$, which is a contraction with \eqref{contra11}.

\end{proof}

\begin{lemma}\label{DeltaZero}
If $t_{n}\to \infty$, then we have
\begin{equation}\label{equivalence}
|x(t_{n})|\to \infty \quad \text{if and only if}\quad \mu(t_{n})\to 0.
\end{equation}
\end{lemma}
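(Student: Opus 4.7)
The plan is to prove the two implications separately. The reverse direction is immediate from the modulation analysis. If $\mu(t_n)\to 0$, then for large $n$ we have $\mu(t_n)<\mu_0$, so $t_n\in I_0$ and Proposition~\ref{Modilation11} applies; the bound $e^{-2|y(t_n)|}/|y(t_n)|^2\lesssim \mu(t_n)\to 0$ forces $|y(t_n)|\to\infty$, and Lemma~\ref{Parametrization} then gives $|x(t_n)|\to\infty$ because $|x(t_n)-y(t_n)|$ is bounded on $I_0$.

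For the forward direction, the strategy will be to argue by contradiction and produce a free-NLS profile that forces $u$ to scatter on a tail $[t_n,\infty)$. Suppose, along a subsequence, $\mu(t_n)\to\delta>0$. The precompactness of $\{u(t,\cdot+x(t))\}$ from Proposition~\ref{Criticalsolution} allows one to extract a further subsequence with $u(t_n,\cdot+x(t_n))\to f$ in $H^1(\R)$, while Lemma~\ref{SequenceInf} gives $|u(t_n,0)|\to 0$. Passing to the limit in $M(u(t_n))$, $E_\gamma(u(t_n))$, and $P_\gamma(u(t_n))$, and using $|u(t_n,0)|\to 0$ to kill the delta contributions, yields $M(f)=M(Q)$, $E_0(f)=E_0(Q)$, $P_0(f)=\lim_n P_\gamma(u(t_n))\geq 0$. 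The key algebraic observation at threshold is the sharp identity
\[
P_\gamma(u(t))=\frac{p-5}{4}\,\mu(t),\qquad t\in[0,\infty),
\]
which drops out by eliminating $\|u(t)\|_{L^{p+1}}^{p+1}$ between $E_\gamma(u(t))=\frac{1}{2}\|u(t)\|_H^2-\frac{1}{p+1}\|u(t)\|_{L^{p+1}}^{p+1}=E_0(Q)$ and the definition of $P_\gamma$, together with \eqref{ELQ}. This forces $P_0(f)=\frac{p-5}{4}\delta>0$ and $\|\partial_x f\|_{L^2}^2=\|\partial_x Q\|_{L^2}^2-\delta$, so $f$ sits strictly below the gradient threshold of the free NLS while remaining at mass--energy threshold, with $P_0(f)>0$.

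The contradiction will be produced by comparing $w_n(t,x):=u(t+t_n,\,x+x(t_n))$, which satisfies
\[
i\partial_t w_n+\partial_x^2 w_n+\gamma\delta(x+x(t_n))w_n+|w_n|^{p-1}w_n=0,\qquad w_n(0)\to f\ \text{in}\ H^1,
\]
with the free-NLS evolution $v$ of $f$. By the threshold scattering theorem for the focusing $L^2$-supercritical NLS on $\R$ in the $P_0>0$, subthreshold-gradient branch (\cite{DuyckaertsRou2010,CamposFarahRou2020}), one has $v\in L_t^a L_x^r(\R\times\R)$, save for the special solution $Q^-$, which can be excluded using the forward almost-periodicity of $u$. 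Since $|x(t_n)|\to\infty$, the shifted delta term can be absorbed into an error that vanishes in a suitable Strichartz norm, so Lemma~\ref{perturb} applied with $v$ as approximate solution gives $\|w_n\|_{L_t^a L_x^r([0,\infty)\times\R)}$ uniformly bounded in $n$. Translating back yields $\|u\|_{L_t^a L_x^r([t_n,\infty)\times\R)}<\infty$ for some large $n$, and combined with local theory on $[0,t_n]$ this contradicts \eqref{Blowup}. The main obstacle is making the perturbative comparison rigorous, since the shifted delta is not an $L_x^r$ function; the point will be to write $w_n$ in Duhamel form relative to the translated operator $H_\gamma(\cdot+x(t_n))$ and exploit the translation-invariance of the free Strichartz estimates together with the decay of $v(t,\cdot)$ at $-x(t_n)$ to make the error small.
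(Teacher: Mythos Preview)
Your plan is essentially the paper's, with two differences worth noting.

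For the reverse implication, the paper takes a slightly different path: from $\mu(t_n)\to 0$ and the modulation bound $[-\gamma|u(t_n,0)|^2]^{1/2}\lesssim\mu(t_n)$ it gets $|u(t_n,0)|\to 0$, then invokes Lemma~\ref{SequenceInf} to conclude $|x(t_n)|\to\infty$. Your route via the bound $|y(t)|^{-2}e^{-2|y(t)|}\lesssim\mu(t)$ is also valid (indeed the paper remarks that this particular estimate is not used in its analysis, so you have found a use for it).

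For the forward implication, your profile extraction and perturbative comparison with the free NLS are exactly what the paper does; the paper handles your ``main obstacle'' by quoting \cite[Proposition~3.4]{BaniVisci2016}, which says precisely that a translated free-NLS solution is an approximate $H_\gamma$-solution with vanishing Strichartz error when $|x(t_n)|\to\infty$. The one real difference is your treatment of $Q^{-}$. You assert it ``can be excluded using the forward almost-periodicity of $u$'', but you give no mechanism, and it is not clear how precompactness of $\{u(t,\cdot+x(t))\}$ alone rules out $f$ being a time-shift of $Q^{-}$. The paper sidesteps this entirely: the threshold result of \cite{CamposFarahRou2020} guarantees that $v$ scatters in \emph{at least one} time direction (if $v$ is a symmetry copy of $Q^{-}$, it still scatters as $t\to-\infty$), and the paper simply treats both cases. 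If $v$ scatters forward, perturbation gives $\|u\|_{L_t^aL_x^r([t_n,\infty))}\lesssim 1$; if $v$ scatters backward, perturbation gives $\|u\|_{L_t^aL_x^r((-\infty,t_n])}\lesssim 1$, and since $t_n\to\infty$ this equally contradicts $\|u\|_{L_t^aL_x^r([0,\infty))}=\infty$. This is cleaner than trying to exclude $Q^{-}$.

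One minor correction: your ``sharp identity'' is actually $P_\gamma(u(t))=\tfrac{p-5}{4}\mu(t)+\tfrac{\gamma}{2}|u(t,0)|^2$, with an extra term coming from the mismatch between $-\tfrac{\gamma}{2}|u(0)|^2$ in $P_\gamma$ and $-\gamma|u(0)|^2$ in $\|u\|_H^2$. This does not affect the limit since $|u(t_n,0)|\to 0$, and the paper in fact only uses the strict inequality $\|\partial_x f\|_{L^2}<\|\partial_x Q\|_{L^2}$, never the value of $P_0(f)$.
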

\begin{proof}
First assume $\mu(t_{n})\to 0$. Then combining  Proposition \ref{Modilation11} and 
Lemma \ref{SequenceInf} we infer that $|x(t_{n})|\to \infty$.

On the other hand, let $t_{n}\to \infty$ and assume by contradiction that $|x(t_{n})|\to \infty$ but, 
possibly for a subsequence only,
\begin{equation}\label{Zeroplus}
\mu(u(t_{n}))\geq c>0.
\end{equation}
Since $\left\{u(t_{n}, \cdot+x(t_{n}))\right\}$ is pre-compact in $H^{1}(\R)$, there exist a subsequence (still denoted bi itself)
and $v_{0}\in H^{1}(\R)$ such that 
\begin{equation}\label{CompactV}
u(t_{n}, \cdot+x(t_{n})) \to v_{0} \quad \text{in}\quad H^{1}(\R).
\end{equation}
In particular, from \eqref{22condition}, \eqref{ZeroPoten} and \eqref{Zeroplus}  we have
\[
M(v_{0})=M(Q), \quad E_{0}(v_{0})=E_{0}(Q) \quad \text{and}\quad \|\partial_{x}v_{0}\|^{2}_{L^{2}}<\|\partial_{x}Q\|^{2}_{L^{2}}.
\]
Let $v$ be the solution of the free NLS on the real line $\R$ (i.e., \eqref{NLS} with $\gamma=0$) with initial data $v_{0}$. As consequence of \cite[Theorem 1.3]{CamposFarahRou2020} we have that
the solution $v$ is global well-posedness. Moreover, either scatters as $t\to \infty$ or as $t\to -\infty$ (or both).

Suppose that $v$ scatters as $t \to \infty$. Let $\widetilde{v}_{n} :=v(t,x-x(t_n))$. Since $|x(t_{n})|\to \infty$ and
$\|v\|_{L_t^aL_x^r([0,\infty)\times\R)} \lesssim 1$, it follows from  \cite[Proposition 3.4]{BaniVisci2016} that 
\begin{align*}
	\widetilde{v}_n (t,x) = e^{-itH_{\gamma}} v_0(\cdot -x(t_n)) + i \int_{0}^{t} e^{-i(t-s)H_{\gamma}} |\widetilde{v}_n |^{p-1}\widetilde{v}_n ds + g_n(t,x)
\end{align*}
where $\|g_n\|_{L_t^a L_x^r([0,\infty)\times\R)} \to 0$ as $t \to \infty$. We have $\| u(t_n) - \widetilde{v}_n\|_{H^1} \to 0$ as $t\to \infty$ by \eqref{CompactV}. By Lemma \ref{perturb}, we obtain $\|u(t_n+t)\|_{L_t^a L_x^r([0,\infty)\times\R)}
 = \|u\|_{L_t^a L_x^r([t_n,\infty)\times\R)} \lesssim 1$ for large $n$. This contradicts that $u$ does not scatter in positive time direction. 
Suppose that $v$ scatters as $t \to -\infty$. By the similar argument, we obtain 
\[\|u(t_n+t)\|_{L_t^a L_x^r((-\infty,0]\times\R)} = \|u\|_{L_t^a L_x^r((-\infty,t_n]\times\R)} \lesssim 1
\] for large $n$. This also contradicts that $u$ does not scatter in positive time direction. 
As a consequence, $\mu(u(t_n)) \to 0$ as $n \to \infty$. 

\end{proof}

Recall that $I_{0,\infty}$ was defined in Lemma \ref{VirialIden}.

\begin{lemma}\label{Compa}
Fix $\gamma<0$. There exists $c>0$ such that
\begin{equation}\label{InequeN}
I_{0,\infty}[u(t)]=\|\partial_{x}u(t)\|^{2}_{L^{2}}-\frac{(p-1)}{2(p+1)} \|u(t)\|^{p+1}_{L^{p+1}}\geq c\mu(t)
\end{equation}
\end{lemma}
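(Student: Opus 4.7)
The plan is to combine the sharp Gagliardo--Nirenberg inequality \eqref{GI} with a uniform lower bound on $\|\partial_x u(t)\|_{L^2}$ coming from energy conservation. Throughout, write $F(v):=\|\partial_x v\|_{L^2}^2-\tfrac{p-1}{2(p+1)}\|v\|_{L^{p+1}}^{p+1}$, so that the RHS of \eqref{InequeN} is $F(u(t))$.

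First, I would apply \eqref{GI} together with $M(u(t))=M(Q)$ and the normalization identity $\tfrac{(p-1)C_{GN}}{2(p+1)}\|Q\|_{L^2}^{(p+3)/2}\|\partial_x Q\|_{L^2}^{(p-5)/2}=1$ established in the proof of Lemma~\ref{variational}. Setting $\beta(t):=\|\partial_x u(t)\|_{L^2}^2/\|\partial_x Q\|_{L^2}^2$, this yields
$$F(u(t))\;\geq\; \|\partial_x u(t)\|_{L^2}^2\bigl[\,1-\beta(t)^{(p-5)/4}\,\bigr],$$
with $\beta(t)\in(0,1]$, the bound $\beta(t)\le 1$ using $\|\partial_x u(t)\|_{L^2}^2\le \|u(t)\|_H^2\le \|\partial_x Q\|_{L^2}^2$ from Lemma~\ref{GlobalS}.

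Second, I would extract a uniform positive lower bound on $\beta(t)$. Using the 1D Sobolev embedding $|u(t,0)|^2\le 2\|u(t)\|_{L^2}\|\partial_x u(t)\|_{L^2}=2\|Q\|_{L^2}\|\partial_x u(t)\|_{L^2}$ and discarding the negative $L^{p+1}$ term, the conservation $E_\gamma(u(t))=E_0(Q)$ gives
$$E_0(Q)\;\leq\; \tfrac{1}{2}\|\partial_x u(t)\|_{L^2}^2+|\gamma|\,\|Q\|_{L^2}\,\|\partial_x u(t)\|_{L^2}.$$
Since $E_0(Q)>0$ (as $p>5$), solving this quadratic inequality in $\|\partial_x u(t)\|_{L^2}$ produces a constant $c_0=c_0(\gamma,Q,p)>0$ with $\|\partial_x u(t)\|_{L^2}\geq c_0$, so $\beta(t)\geq \beta_0:=c_0^2/\|\partial_x Q\|_{L^2}^2>0$.

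To conclude, $\|\partial_x u(t)\|_{L^2}^2\le \|u(t)\|_H^2$ gives $\mu(t)\le (1-\beta(t))\|\partial_x Q\|_{L^2}^2$, so the claim reduces to a positive lower bound for
$$\Phi(\beta):=\frac{\beta(1-\beta^{(p-5)/4})}{1-\beta}$$
on $[\beta_0,1)$. Now $\Phi$ is continuous on $[\beta_0,1)$ with $\lim_{\beta\to 1^-}\Phi(\beta)=(p-5)/4>0$ (L'H\^opital), hence extends continuously to $[\beta_0,1]$ with strictly positive values, so $c:=\min_{[\beta_0,1]}\Phi>0$ works. The main delicate point is Step~2: for $\gamma<0$ the potential term $-\tfrac{\gamma}{2}|u(0)|^2$ is \emph{positive} in $E_\gamma$ and could, a priori, absorb the kinetic contribution; what saves us is that the 1D Sobolev embedding bounds $|u(0)|^2$ only linearly in $\|\partial_x u\|_{L^2}$, which combined with the strict positivity of $E_0(Q)$ in the mass-supercritical regime $p>5$ forces a $t$-uniform positive floor on $\|\partial_x u(t)\|_{L^2}$.
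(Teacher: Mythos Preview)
Your proof is correct and takes a genuinely different route from the paper's.

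The paper argues by contradiction: assuming a sequence $\{t_n\}$ with $I_{0,\infty}[u(t_n)]\le \tfrac{1}{n}\mu(t_n)$, it first derives the exact algebraic identity
\[
I_{0,\infty}[u]=\frac{p-5}{4}\,\mu+\gamma|u(0)|^{2},
\]
then uses the variational characterization of $Q$ to show $\{u(t_n)\}$ is a minimizing sequence for $d_0$, forcing $\|\partial_x u(t_n)\|_{L^2}\to\|\partial_x Q\|_{L^2}$ and hence $\mu(t_n)\to 0$. At that point the paper invokes the modulation analysis (Proposition~\ref{Modilation11}, the bound $-\gamma|u(t,0)|^2\lesssim \mu(t)^2$) to conclude $I_{0,\infty}[u(t_n)]\ge \tfrac{p-5}{8}\mu(t_n)$, contradicting the hypothesis.

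Your argument is direct and avoids both the minimizing-sequence step and the modulation machinery entirely: you combine sharp Gagliardo--Nirenberg with the elementary bound $\mu(t)\le (1-\beta(t))\|\partial_x Q\|_{L^2}^2$ (valid because $\gamma|u(t,0)|^2\le 0$), reducing the claim to the positivity of the one-variable function $\Phi(\beta)=\beta(1-\beta^{(p-5)/4})/(1-\beta)$ on $[\beta_0,1)$. The lower bound $\beta_0>0$ from energy conservation and the 1D Sobolev inequality is the only place the potential enters. This is a cleaner, more self-contained argument: the paper's proof ties Lemma~\ref{Compa} to Section~\ref{S:Modulation}, whereas yours makes the lemma independent of the modulation analysis. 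The paper's identity does, however, make transparent that the sole obstruction to the sharp constant $c=\tfrac{p-5}{4}$ is the potential term $\gamma|u(0)|^2$, a structural point your approach obscures. One trivial slip: you write ``the RHS of \eqref{InequeN} is $F(u(t))$'' --- you mean the LHS.
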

\begin{proof}
Assume that \eqref{InequeN} is not true. Then there exists a sequence $\left\{t_{n}\right\}$ such that
\begin{equation}\label{Contra22}
I_{0,\infty}[u(t_{n})]\leq \frac{1}{n}\mu(t_{n}).
\end{equation}
Using Lemma \ref{GlobalS} we infer that the sequence $\left\{\mu(t_{n})\right\}$ is bounded. We claim that
\[
\mu(t_{n})\to 0 \quad \text{as $n\to\infty$}.
\]
Indeed, by using the Pohozaev's identities we have (see Section \ref{S:preli})
\begin{equation}\label{Pohi22}
\|\partial_{x}Q\|^{2}_{L^{2}}=\frac{(p-1)}{2(p+1)}\|Q\|^{p+1}_{L^{p+1}}
\quad \text{and}\quad
\frac{p-1}{2}E_{0}(Q)=\frac{(p-5)}{4}\|\partial_{x}Q\|^{2}_{L^{2}}.
\end{equation}
Combining this inequalities we can write
\begin{equation}\label{Aux11}
I_{0,\infty}[u(t_{n})]
=\(\frac{p-5}{4}\)\mu(t_{n})+\gamma|u(t_{n}, 0)|^{2}.
\end{equation}
Next, notice that $I_{0,\infty}[u(t_{n})]>0$. Indeed, by using sharp Gagliardo-Nirenberg inequality \eqref{GI}, \eqref{Su},  \eqref{Gradient} and \eqref{Pohi22} we get
\[
\|u(t_{n})\|^{p+1}_{L^{p+1}}\leq 
\frac{2(p+1)}{p-1}\(\frac{\|\partial_{x}u(t_{n})\|_{L^{2}}}{\|\partial_{x}Q\|_{L^{2}}}\)^{\frac{p-5}{2}}
\|\partial_{x}u(t_{n})\|^{2}_{L^{2}}<\frac{2(p+1)}{p-1}\|\partial_{x}u(t_{n})\|^{2}_{L^{2}},
\]
which implies that $I_{0,\infty}[u(t_{n})]>0$. Thus, by \eqref{Su} and \eqref{Contra22} we infer that
\begin{equation}\label{Aux33}
S_{0}(u(t_{n}))\leq S_{0}(Q)\quad \text{and}\quad I_{0,\infty}[u(t_{n})]\to 0 \quad \text{as $n\to \infty$}. 
\end{equation}
That is, $\left\{u(t_{n}\right\}$ is a minimizing sequence of the variational problem \eqref{dinf}. In particular,
we have
\[
\|\partial_{x}u(t_{n})\|_{L^{2}}\to \|\partial_{x}Q\|_{L^{2}},
\]
which implies
\begin{equation}\label{Secondelta}
\mu(t_{n})=\gamma|u(t_{n},0)|^{2}+o(1) \quad \text{as $n\to \infty$}.
\end{equation}
Combining \eqref{Aux11} and \eqref{Secondelta} we obtain the claim.
Finally, by Proposition \ref{Modilation11} we have
\[
-\gamma|u(t_{n},0)|^{2}\lesssim \mu(t_{n})^{2}\leq \frac{(p-5)}{8} \mu(t_{n})\quad \text{for $n$ large}.
\]
Thus, by using \eqref{Aux11} and \eqref{Contra22} we get
\[
\frac{(p-5)}{8}\mu(t_{n})\leq \frac{1}{n}\mu(t_{n})\quad \text{for $n$ large},
\]
which is a contradiction with \eqref{deltapositive}.
\end{proof}

 In the next two propositions we study the behavior of the parameter $x(t)$ defined in \eqref{CompacNew}.

\subsection{If $x(t)$ is bounded, then $x(t)$ is unbounded}\label{Sub:11}

In this subsection we establish the following result.
\begin{proposition}\label{BoundedUN}
If the spacial center $x(t)$ is bounded, then $x(t)$ is unbounded.
\end{proposition}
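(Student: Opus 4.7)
My plan is to argue by contradiction: assume $|x(t)|\leq C$ for every $t\geq 0$ and use the localized virial identity of Lemma \ref{VirialIden} to force a linear-in-$T$ growth of $V_R[u(T)]$, in contradiction with its uniform-in-$t$ bound $|V_R[u(t)]|\lesssim R$.

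The first step is to upgrade the precompactness hypothesis. Since $\{u(t,\cdot+x(t))\}$ is precompact in $H^1(\R)$ by Proposition \ref{Criticalsolution} and translation by a bounded quantity is continuous in $H^1$, any sequence $u(t_n)$ has a subsequence with $u(t_n,\cdot+x(t_n))\to v$ in $H^1$ and $x(t_n)\to x^{*}$, so $u(t_n)\to v(\cdot-x^{*})$. Hence $\{u(t):t\geq 0\}$ itself is precompact in $H^1(\R)$, and therefore
\[
\lim_{R\to\infty}\sup_{t\geq 0}\int_{|x|>R}\bigl(|\partial_x u(t,x)|^2+|u(t,x)|^2+|u(t,x)|^{p+1}\bigr) dx = 0,
\]
where the $L^{p+1}$ tail comes from the Sobolev embedding $H^1(\R)\hookrightarrow L^{\infty}(\R)$. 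Next, Lemma \ref{DeltaZero} applied along any sequence $t_n\to\infty$ forbids $\mu(t_n)\to 0$ under our boundedness assumption, so there exist $c_0>0$ and $T_0>0$ with $\mu(t)\geq c_0$ for all $t\geq T_0$. Combined with Lemma \ref{Compa} and $\gamma<0$, this yields
\[
I_{\infty,\gamma}[u(t)] = 8 P_\gamma(u(t)) \geq 8 I_{0,\infty}[u(t)] \geq 8c\mu(t)\geq 8c c_0\quad\text{for } t\geq T_0.
\]

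The core estimate is then the comparison of $I_{R,\gamma}[u(t)]$ with $I_{\infty,\gamma}[u(t)]$. Since the boundary contribution $-4\gamma|u(t,0)|^2$ is present for every $R\in[1,\infty]$, the potential part cancels in the difference and
\[
I_{R,\gamma}[u(t)] - I_{\infty,\gamma}[u(t)] = I_{R,0}[u(t)] - I_{\infty,0}[u(t)].
\]
Using $\partial_x^2 w_R\equiv 2$ on $|x|\leq R$, $|\partial_x^2 w_R|\lesssim 1$ and the scaling bound $|\partial_x^4 w_R|\lesssim R^{-2}$, the error obeys
\[
\bigl|I_{R,\gamma}[u(t)]-I_{\infty,\gamma}[u(t)]\bigr|
\lesssim \int_{|x|>R}\bigl(|\partial_x u(t)|^2 + |u(t)|^{p+1}\bigr) dx + R^{-2}\|u(t)\|_{L^2}^2,
\]
which by the uniform tail decay can be made smaller than $4c c_0$, uniformly in $t\geq T_0$, by taking $R$ sufficiently large. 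Therefore $I_{R,\gamma}[u(t)]\geq 4c c_0$ for every $t\geq T_0$.

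Integrating the identity $\frac{d}{dt}V_R[u(t)] = I_{R,\gamma}[u(t)]$ from $T_0$ to $T$ then gives $V_R[u(T)] \geq V_R[u(T_0)] + 4c c_0(T-T_0)\to\infty$ as $T\to\infty$. On the other hand, since $\partial_x w_R(x) = R\phi'(x/R)$ is supported in $|x|\leq 2R$ with $|\partial_x w_R(x)|\lesssim R$, Cauchy-Schwarz together with \eqref{EquiNorm} yield $|V_R[u(t)]|\lesssim R\|u(t)\|_{L^2}\|\partial_x u(t)\|_{L^2}\lesssim R$ uniformly in $t$, producing the desired contradiction. The step I expect to be the main obstacle is the uniform-in-$t$ smallness of $|I_{R,\gamma}[u(t)]-I_{\infty,\gamma}[u(t)]|$; this is exactly where both the hypothesis that $x(t)$ is bounded (which upgrades the centered precompactness to precompactness of $\{u(t)\}$ itself) and the structure of $w_R$ (agreeing with $x^2$ on $|x|\leq R$, so that error terms sit on the tail) play their essential role.
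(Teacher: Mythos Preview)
Your proof is correct and uses the same ingredients as the paper (localized virial identity, precompactness, Lemma~\ref{Compa}, and Lemma~\ref{DeltaZero}), just organized in a slightly different order: you invoke Lemma~\ref{DeltaZero} first to obtain a uniform lower bound $\mu(t)\geq c_0$ and then derive a direct growth-versus-bound contradiction for $V_R$, whereas the paper first proves the averaged virial estimate $\tfrac{1}{T}\int_0^T\mu(t)\,dt\lesssim \epsilon+\tfrac{R}{T}$, extracts a sequence with $\mu(t_n)\to 0$, and only then appeals to Lemma~\ref{DeltaZero}. The two arguments are logically equivalent; your version is marginally more streamlined for this particular proposition, while the paper's intermediate estimate \eqref{NewVirial} has the modest advantage of being stated without assuming boundedness of $x(t)$.
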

\begin{proof}
The proof of proposition runs in two steps. 

\textbf{Step 1.} Virial estimate. Let $T>0$ and $\epsilon>0$, then there exists a constant $\rho_{\epsilon}=\rho(\epsilon)>0$ such that
\begin{equation}\label{NewVirial}
\frac{1}{T}\int^{T}_{0}\mu(t)dt\lesssim\epsilon
+\frac{1}{T}[\rho_{\epsilon}+\sup_{t\in [0,T]}|x(t)|]\|u\|_{L^{\infty}_{t}H^{1}_{x}}^{2}.
\end{equation}
Indeed, for $R>1$, which will be determined later, we can write (see Lemma \ref{VirialIden})
\begin{equation}\label{DecompV}
\frac{d}{dt}V_{R}[u(t)]=I_{\infty, 0}[u(t)]+(I_{R, \gamma}[u(t)]-I_{\infty, 0}[u(t)]).
\end{equation}
Notice that
\begin{align}\nonumber
&I_{R, \gamma}[u(t)]-I_{\infty, 0}[u(t)]\\\label{CViral11}
&=\int_{|x|> R}(-8)|\partial_{x}u(t,x)|^{2}+4\frac{p-1}{p+1}|u(t,x)|^{2}
+4[\partial^{2}_{x}w_{R}(x)]|\partial_{x}u(t,x)|^{p+1}dx\\\label{CViral22}
&+\int_{|x|> R}[-\partial^{4}_{x}w_{R}(x)]|u(t,x)|^{2}-2\frac{p-1}{p+1}[\partial^{2}_{x}w_{R}(x)]|u(t,x)|^{p+1}dx\\\label{CViral33}
&-4\gamma|u(t,0)|^{2}.
\end{align}
As $\gamma<0$, by \eqref{DecompV} and \eqref{InequeN} we get
\begin{equation}\label{NewDecomp}
c\delta(t)\leq \frac{d}{dt}V_{R}[u(t)]+|\eqref{CViral11}|+|\eqref{CViral22}|.
\end{equation}
By \eqref{CompacNew} we have uniform localization of $u$: for each $\epsilon>0$, there exists $\rho_{\epsilon}=\rho (\epsilon)>0$ independent of $t$ such that
\[
\int_{|x-x(t)|>\rho_{\epsilon}}|\partial_{x} u(t,x)|^{2}+|u(t,x)|^{p+1}+|u(t,x)|^{2}dx<\epsilon.
\]
Given $T>0$, set
\[
R:=\rho_{\epsilon}+\sup_{t\in [0,T]}|x(t)|.
\]
It follows easily that $\left\{|x|\geq R\right\}\subset \left\{|x-x(t)|\geq \rho_{\epsilon}\right\}$ for all $t\in [0, T]$,
thus by definition of $w_{R}$ (see Section \ref{S:preli}) 
\begin{equation}\label{smallI}
|\eqref{CViral11}|+|\eqref{CViral22}|<\epsilon
\end{equation}
for all $t\in [0, T]$. Moreover, since $|V_{R}[u(t)]|\lesssim R\|u\|^{2}_{L^{\infty}_{t}H^{1}_{x}}$, 
integrating \eqref{NewDecomp} on $[0, T]$ and applying \eqref{smallI}  yields
\[
\int^{T}_{0}\mu(t)\lesssim [\rho_{\epsilon}+\sup_{t\in [0,T]}|x(t)|]\|u\|^{2}_{L^{\infty}_{t}H^{1}_{x}}
+\epsilon T,
\]
which implies \eqref{NewVirial}.

\textbf{Step 2.} Conclusion.
The result is proved by contradiction. Assume that $x(t)$ is bounded. By \eqref{NewVirial} we have
\[
\frac{1}{T}\int^{T}_{0}\mu(t)dt\lesssim\epsilon
+\frac{1}{T}\rho_{\epsilon}\quad \text{for all $T>0$}, 
\]
and for any $\epsilon>0$. Consider a sequence $\epsilon_{n}\to 0$ as $n\to \infty$. By using the mean value theorem for integrals, and 
choose appropriately  times $T_{n}\to \infty$, we infer that there exists (recall that $\mu(t)>0$) a sequence $t_{n}\to \infty$ 
so that $\mu(t_{n})\to 0$ as $n\to \infty$, which together with Lemma \ref{DeltaZero} yields a contradiction. 
\end{proof}

\subsection{If $x(t)$ is unbounded, then $x(t)$ is bounded}\label{Sub:22}

Now our goal  is to prove the following result.
\begin{proposition}\label{NaoBounded}
If the spacial center $x(t)$ is unbounded, then $x(t)$ is bounded.
\end{proposition}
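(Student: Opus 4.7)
The approach is to argue by contradiction via a moving-center Virial identity that forces $\mu\in L^1([0,\infty))$, which combined with the modulation bound $|y'(t)|\lesssim\mu(t)$ from Proposition~\ref{Modilation11} precludes $y(t)$ (hence $x(t)$, via Lemma~\ref{Parametrization}) from being unbounded. First, assume $x(t)$ is unbounded; extract a sequence $t_n\to\infty$ with $|x(t_n)|\to\infty$ and, passing to a subsequence, $x(t_n)\to+\infty$. Lemma~\ref{DeltaZero} gives $\mu(t_n)\to 0$, so $t_n\in I_0$ for large $n$ and there $x(t_n)=y(t_n)$, whence $y(t_n)\to+\infty$; Proposition~\ref{Modilation11} then provides the decomposition $u(t_n)=e^{i\theta(t_n)}[g(t_n)+Q(\cdot-y(t_n))]$ with $\|g(t_n)\|_{H^1}\sim\mu(t_n)\to 0$.

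The key estimate is a moving-center Virial identity. For a large fixed $R$, define
\begin{equation*}
\widetilde V_R(t):=2\IM\int_\R\partial_x\bigl[w_R(x-y(t))\bigr]\,\overline{u(t,x)}\,\partial_x u(t,x)\,dx,\quad t\in I_0.
\end{equation*}
Differentiating along~\eqref{NLS} and accounting for the motion of $y(t)$ gives
\begin{equation*}
\tfrac{d}{dt}\widetilde V_R(t)=I_{0,\infty}[u(t)]+\mathrm{Err}_R(t)+O\bigl(|y'(t)|\,\|u(t)\|_{H^1}^2\bigr),
\end{equation*}
where $\mathrm{Err}_R(t)$ collects tail contributions supported in $\{|x-y(t)|>R\}$ together with the delta term at $x=0$. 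Pre-compactness of $\{u(t,\cdot+x(t))\}$ in $H^1$ makes the tails uniformly small as a function of $R$, while for $|y(t)|>2R$ the delta contribution vanishes identically because $\partial_x^k w_R(-y(t))=0$ there for all $k$. By Lemma~\ref{Compa}, $I_{0,\infty}[u(t)]\geq c\mu(t)$, and by Proposition~\ref{Modilation11}, $|y'(t)|\lesssim\mu(t)$; choosing $R$ large and $\mu_0$ small, one concludes $\tfrac{d}{dt}\widetilde V_R(t)\gtrsim\mu(t)$ on the tail of $I_0$ where $|y(t)|>2R$.

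Since $|\widetilde V_R(t)|\lesssim R\cdot\sup_t\|u(t)\|_{H^1}^2$ is uniformly bounded in $t$, integrating the previous inequality up to the $t_n$ yields $\int\mu(s)\,ds<\infty$, and then $|y'(t)|\lesssim\mu(t)$ forces $y(t)$ to have a finite limit as $t\to\infty$, contradicting $y(t_n)\to+\infty$. The main obstacle will be the moving-center correction $|y'(t)|\,\|u(t)\|_{H^1}^2$, which has the same order as the leading term $\mu(t)$; closing the estimate requires that the Lemma~\ref{Compa} constant $c$ dominate the product of the modulation constant from Proposition~\ref{Modilation11} with $\sup_t\|u(t)\|_{H^1}^2$, forcing $\mu_0$ small enough and $R$ correspondingly large. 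A secondary technical point is that the complement of $I_0$ (where modulation fails) must be shown to consist of at most bounded intervals so as not to obstruct the integration argument.
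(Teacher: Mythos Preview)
Your strategy differs substantially from the paper's, and as written it has a genuine gap at exactly the point you flag as the ``main obstacle.'' The moving-center correction in $\tfrac{d}{dt}\widetilde V_R$ is $-y'(t)\cdot 2\IM\int\partial_x^2 w_R(\cdot-y(t))\,\overline u\,\partial_x u$, which you bound by $|y'(t)|\,\|u\|_{H^1}^2\lesssim C_1\mu(t)\,S_0(Q)$. You then claim this can be beaten by the main term $c\mu(t)$ from Lemma~\ref{Compa} by taking $\mu_0$ small and $R$ large. But neither $c$ (which is essentially $\tfrac{p-5}{8}$, see \eqref{Aux11}) nor the modulation constant $C_1$ in $|y'|\le C_1\mu$ depends on $\mu_0$ or $R$; they are fixed universal constants and there is no reason to expect $c>C_1 S_0(Q)$. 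So the inequality cannot be closed by parameter choice. There \emph{is} a repair: since $Q$ is real, modulation gives $\IM\int\overline u\,\partial_x u=O(\|g\|_{H^1})=O(\mu)$, so after splitting $\partial_x^2 w_R=2+(\partial_x^2 w_R-2)$ the correction is $O(\mu^2)+O(\epsilon\mu)$, which is absorbable. You do not invoke this, and without it the argument fails. The ``secondary technical point'' about $[0,\infty)\setminus I_0$ is also more serious than you indicate: $y(t)$ and hence $\widetilde V_R$ are undefined there, and there is no a priori reason the complement is bounded; the paper needs a separate dichotomy (Step~2 of Proposition~\ref{Spatialcenter}) to handle it.

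For comparison, the paper avoids the moving center entirely. It uses the \emph{fixed}-center Virial $V_R[u]$ with $R=\rho_\epsilon+\sup_{[t_1,t_2]}|x(t)|$, obtaining (Lemma~\ref{Lemma11})
\[
\int_{t_1}^{t_2}\mu(t)\,dt\lesssim \bigl(1+\sup_{[t_1,t_2]}|x(t)|\bigr)\bigl(\mu(t_1)+\mu(t_2)\bigr),
\]
where the $\sup|x|$ factor comes from $|V_R|\lesssim R$. Separately (Proposition~\ref{Spatialcenter}), $|x(t_1)-x(t_2)|\lesssim\int_{t_1}^{t_2}\mu+C$, using $|y'|\lesssim\mu$ on $I_0$ and a unit-interval dichotomy off $I_0$. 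One then picks $t_n\to\infty$ with $|x(t_n)|=\sup_{[0,t_n]}|x|\to\infty$; Lemma~\ref{DeltaZero} forces $\mu(t_n)\to 0$, and feeding the first estimate into the second yields $|x(t_n)-x(t_N)|\le K+\tfrac12|x(t_n)|$, a contradiction. This two-estimate bootstrap sidesteps both the moving-center correction and any need for $\int_0^\infty\mu<\infty$.
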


Before giving the proof of Proposition \ref{NaoBounded} some preparation is necessary.
 
\begin{lemma}\label{Lemma11}
Let $\mu_{1}\in (0, \mu_{0})$ be sufficiently small. There exists a constant $C=C(\mu_{1})>0$ such that for any interval $[t_{1}, t_{2}]\subset [0, \infty)$ we have
\begin{equation}\label{BoundT1}
\int^{t_{2}}_{t_{1}}\mu(t)dt\leq C\big[1+\sup_{t\in[t_{1},t_{2}]}|x(t)|\big]\left\{\mu(t_{1})+\mu(t_{2})\right\}.
\end{equation}
\end{lemma}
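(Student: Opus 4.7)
The plan is to combine the localized virial identity of Lemmas \ref{VirialIden} and \ref{VirialModulate} with the coercivity of Lemma \ref{Compa} and the modulation analysis of Proposition \ref{Modilation11}. Set $R := R_{\ast}(1 + \sup_{t\in[t_1,t_2]}|x(t)|)$, where $R_{\ast}=R_{\ast}(\mu_1)$ is to be fixed later, and choose a smooth cutoff $\chi:[t_1,t_2]\to[0,1]$ with $\chi(t)=1$ where $\mu(t)<\mu_1/2$ (so the parameters $\theta(t),y(t)$ of Proposition \ref{Modilation11} are available there) and $\chi(t)=0$ where $\mu(t)>\mu_1$. Integrating the identity of Lemma \ref{VirialModulate} on $[t_1,t_2]$ and invoking $I_{\infty,0}[u(t)]\gtrsim\mu(t)$ from Lemma \ref{Compa} gives
\[
c\int_{t_1}^{t_2}\mu(t)\,dt \;\leq\; |V_R[u(t_1)]| + |V_R[u(t_2)]| + \int_{t_1}^{t_2}|\mathrm{Err}(t)|\,dt,
\]
where $\mathrm{Err}(t)$ denotes the sum of the two bracketed differences in Lemma \ref{VirialModulate}. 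The conclusion will follow once we prove $|V_R[u(t_j)]|\lesssim R\mu(t_j)$ and $\int_{t_1}^{t_2}|\mathrm{Err}(t)|\,dt\leq \frac{c}{2}\int_{t_1}^{t_2}\mu(t)\,dt$, after fixing $\mu_1$ small and $R_{\ast}$ large.

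For the boundary contributions we distinguish two cases. When $\mu(t_j)\geq\mu_1$, the trivial bound $|V_R[u(t_j)]|\lesssim R\|u\|_{L^\infty_tH^1_x}^2$ combined with $\mu(t_j)\geq\mu_1$ yields $|V_R[u(t_j)]|\lesssim_{\mu_1}R\mu(t_j)$. When $\mu(t_j)<\mu_1\leq\mu_0$, substitute $u(t_j)=e^{i\theta(t_j)}[g(t_j)+Q(\cdot-y(t_j))]$ from \eqref{DecomU} into the definition of $V_R$: the pure ground state contribution $2\,\IM\int\partial_x w_R\,\overline{Q(\cdot-y)}\,\partial_x Q(\cdot-y)\,dx$ vanishes since the integrand is real, so only terms of order at least $1$ in $g(t_j)$ remain, and these are bounded by $R\|g(t_j)\|_{H^1}\sim R\mu(t_j)$ via \eqref{Estimatemodu}.

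For the interior error we again split on $\mu(t)\geq\mu_1/2$ versus $\mu(t)<\mu_1/2$. In the high-$\mu$ regime the modulation subtraction is inactive and $|\mathrm{Err}(t)|=|I_{R,\gamma}[u]-I_{\infty,0}[u]|$ is handled by the precompactness of $\{u(t,\cdot+x(t))\}$: picking $R_{\ast}$ larger than the tightness radius $\rho_\varepsilon$ (as in the proof of Proposition \ref{BoundedUN}) forces $|\mathrm{Err}(t)|\lesssim\varepsilon$, and because $\mu(t)\geq\mu_1/2$ this is $\leq\frac{c}{4}\mu(t)$ once $\varepsilon\ll\mu_1$. In the low-$\mu$ regime the subtraction cancels the $O(1)$ part: expanding $I_{R,0}[u]-I_{R,0}[e^{i\theta}Q(\cdot-y)]$ around the ground state via Taylor and using $\|g\|_{H^1}\lesssim\mu(t)$, the residue is $O(e^{-R_{\ast}}\mu(t))$, because the weights $\partial_x^\alpha w_R$ are supported in $\{|x|\geq R\}$ where $Q(\cdot-y)$ and its derivatives are exponentially small (indeed $|y(t)|\geq R_{\ast}$ by Remark \ref{Yinfinity} upon taking $\mu_1$ small enough); the delta contribution $-4\gamma|u(t,0)|^2\lesssim\mu(t)^2\leq\mu_1\mu(t)$ is likewise absorbed.

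The main difficulty is the low-$\mu$ error estimate: one has to show that the first-order variation of $I_{R,0}[\,\cdot\,]$ at $e^{i\theta(t)}Q(\cdot-y(t))$, tested against $g(t)$, carries a prefactor that can be rendered arbitrarily small via the choice of $R_{\ast}$. This combines the exponential decay of $Q$ off its center with the lower bound $|y(t)|\geq R_{\ast}$ from Remark \ref{Yinfinity} and the sharp $H^1$-bound $\|g(t)\|_{H^1}\sim\mu(t)$ of Proposition \ref{Modilation11}; once this is secured the remaining bookkeeping of the error is routine and one can close the argument by fixing $\mu_1$ first and $R_{\ast}$ last.
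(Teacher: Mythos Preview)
Your outline follows the same architecture as the paper's proof: integrate the virial identity of Lemma~\ref{VirialModulate} with a cutoff that activates the ground-state subtraction only when $\mu(t)$ is small, bound the boundary terms $V_R[u(t_j)]$ by $R\mu(t_j)$ (trivially when $\mu(t_j)\geq\mu_1$, via the modulation decomposition when $\mu(t_j)<\mu_1$), and control the error by precompactness in the high-$\mu$ regime and by the subtraction in the low-$\mu$ regime. This is exactly what the paper does.

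There is, however, a genuine confusion in your low-$\mu$ error estimate. You justify the smallness of $Q(\cdot-y(t))$ on $\{|x|\geq R\}$ by invoking Remark~\ref{Yinfinity} to obtain $|y(t)|\geq R_\ast$. This is backwards on two counts. First, a \emph{lower} bound on $|y(t)|$ does nothing to push the bump of $Q(\cdot-y)$ away from $\{|x|\geq R\}$; if anything it moves the bump toward that region. Second, Remark~\ref{Yinfinity} would force the smallness threshold (your $\mu_1$) to depend on $R_\ast$, directly contradicting your declared order ``fix $\mu_1$ first and $R_\ast$ last'' and making the argument circular. The correct mechanism, which the paper uses, is the \emph{upper} bound $|y(t)|=|x(t)|\leq\sup_{[t_1,t_2]}|x(t)|$ (recall $x(t)=y(t)$ on $I_0$): then $R-|y(t)|\geq R_\ast(1+\sup|x(t)|)-\sup|x(t)|\geq R_\ast$ once $R_\ast\geq 1$, so $|x-y(t)|\geq R_\ast$ throughout $\{|x|\geq R\}$ and $Q(\cdot-y)$ is $O(e^{-R_\ast})$ there, with no appeal to Remark~\ref{Yinfinity} and no constraint linking $\mu_1$ to $R_\ast$. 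A smaller omission: your claimed residue $O(e^{-R_\ast}\mu(t))$ ignores the second-order Taylor remainder $O(\|g\|_{H^1}^2)=O(\mu(t)^2)$; this is harmless (absorb via $\mu(t)^2\leq\mu_1\mu(t)$ exactly as you do for the $\delta$-term), but it should be said.
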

\begin{proof}
Consider $R>1$, which will be determined later. We use the localized virial identities in Lemma \ref{VirialModulate}
with the function $\chi(t)$ satisfying
\[
\chi(t)=
\begin{cases}
1& \quad \mu(t)<\mu_{1} \\
0& \quad \mu(t)\geq \mu_{1}.
\end{cases}
\]
By \eqref{InequeN} and Lemma \ref{VirialModulate} we have
\begin{equation}\label{VirilaX}
\frac{d}{dt}V_{R}[u(t)]=I_{\infty, 0}[u(t)]+\EE(t)\geq c\mu(t)+\EE(t)
\end{equation}
with
\begin{equation}\label{Error11}
\EE(t)=
\begin{cases}
I_{R, \gamma}[u(t)]-I_{\infty, 0}[u(t)]& \quad  \text{if $\mu(t)\geq \mu_{1}$}, \\
I_{R, \gamma}[u(t)]-I_{\infty, 0}[u(t)]-\K[u(t)]& \quad \text{if $\mu(t)< \mu_{1}$},
\end{cases}
\end{equation}
where
\begin{equation}\label{Error22}
\K(t)=I_{R,0}[e^{i\theta(t)}Q(\cdot-y(t))]-I_{\infty,0}[e^{i\theta(t)}Q(\cdot-y(t))].
\end{equation}
We assume the following two claims for a moment to conclude the proof.

\textit{{Claim I.}} Given $R>1$  we have
\begin{align}\label{EstimateV11}
	|V_{R}[u(t_{j})]|\lesssim \frac{R}{\mu_{1}}\mu(t_{j}) \quad& \text{if $\mu(t_{j})\geq \mu_{1}$ for $j=1$, $2$},\\
	\label{EstimateV22}
	|V_{R}[u(t_{j})]|\lesssim R \mu(t_{j}) \quad &\text{if $\mu(t_{j})< \mu_{1}$ $j=1$, $2$}.
\end{align}

\textit{{Claim II.}} Given $\epsilon>0$, there exists $\rho_{\epsilon}=\rho(\epsilon)>0$ such that if
$R=\rho_{\epsilon}+\sup_{t\in [t_{1}, t_{2}]}|x(t)|$ we have
\begin{align}\label{EstimateE11}
	\EE(t)\geq -\frac{\epsilon}{\mu_{1}}\mu(t) \quad &\text{uniformly for $t\in [t_{1}, t_{2}]$ and $\mu(t)\geq \mu_{1}$},\\
	\label{EstimateE22}
|\EE(t)|\leq \epsilon\mu(t)+\mu(t)^{2} \quad &\text{uniformly for $t\in [t_{1}, t_{2}]$ and $\mu(t)< \mu_{1}$}.
\end{align}

Integrating \eqref{VirilaX} on $[t_{1}, t_{2}]$ and putting all the estimates \eqref{EstimateV11}, \eqref{EstimateV22}, \eqref{EstimateE11} 
and \eqref{EstimateE22} together  yields
\[
\int^{t_{2}}_{t_{1}}\mu(t)dt\lesssim 
\frac{1}{\mu_{1}}\Big[\rho_{\epsilon}+\sup_{t\in [t_{1}, t_{2}]}|x(t)|\Big](\mu(t_{1})+\mu(t_{2}))
+\Big(\frac{\epsilon}{\mu_{1}}+\epsilon+\mu_{1}\Big)\int^{t_{2}}_{t_{1}}\mu(t)dt.
\]
Here we have used that $\mu(t)^{2}\lesssim \mu_{1}\mu(t)$.  Choosing $\epsilon=\epsilon(\mu_{1})$ sufficiently small
we get \eqref{BoundT1}.

Therefore, it remains to establish the above claims.
\begin{proof}[{Proof of Claim I}] First, assume that $\mu(t_{j})\geq \mu_{1}$. Then we have
\[
|V_{R}[u(t)]|=\left|2\IM\int_{\R}\partial_{x}[w_{R}(x)]\overline{u(t,x)}\partial_{x}u(t,x)dx\right|
\lesssim R\|u\|^{2}_{L^{\infty}_{t}H^{1}_{x}}\lesssim_{Q} \frac{R}{\mu_{1}}\mu(t_{j})
\]
which implies \eqref{EstimateV11}. Here, we have used that $\|u\|^{2}_{L^{\infty}_{t}H^{1}_{x}} \sim S_{0}(Q)$.
On the other hand, if $\mu(t_{j})< \mu_{1}$, by using the fact that $Q$ is real we get
\begin{align*}
|V_{R}[u(t)]|&=\left|2\IM\int_{\R}\partial_{x}w_{R}[\overline{u}\partial_{x}u-e^{-i\theta(t_{j})}Q(\cdot-y(t_{j}))
\partial_{x}(e^{i\theta(t_{j})}Q(\cdot-y(t_{j})))dx\right|\\
&\lesssim
R[\|u\|_{L^{\infty}_{t}H^{1}_{x}}+\|Q\|_{H^{1}}]
\|u(t_{j})-e^{i\theta (t_{j})}Q(\cdot-y(t_{j}))\|_{H^{1}}\\
&\lesssim_{Q}R\|g(t_{j})\|_{H^{1}}
\lesssim_{Q}R \mu(t_{j}).
\end{align*}
where in the last inequality we have used estimate \eqref{Estimatemodu} in  Proposition \eqref{Modilation11} 
(recall that $\mu(t_{j})< \mu_{1}<\mu_{0}$). This completes the proof of Claim I.
\end{proof}

\begin{proof}[{Proof of Claim II}]
First, assume that $\mu(t)\geq \mu_{1}$. By using \eqref{CompacNew} we obtain that for each $\epsilon>0$, there exists $\rho_{\epsilon}=\rho (\epsilon)>0$  such that
\begin{equation}\label{CompactAgain}
\int_{|x-x(t)|>\rho_{\epsilon}}|\partial_{x} u(t,x)|^{2}+|u(t,x)|^{p+1}+|u(t,x)|^{2}dx<\epsilon.
\end{equation}
We put
\[
R:=\rho_{\epsilon}+\sup_{t\in [t_{1},t_{2}]}|x(t)|.
\]
Notice  that $\left\{|x|\geq R\right\}\subset \left\{|x-x(t)|\geq \rho_{\epsilon}\right\}$ 
for all $t\in [t_{1}, t_{2}]$. Since $\gamma<0$, using the same argument developed above 
(see \eqref{CViral11}-\eqref{CViral33} and \eqref{smallI}) 
implies
\[
 I_{R, \gamma}[u(t)]-I_{\infty, 0}[u(t)]\geq -\epsilon\geq -\frac{\epsilon}{\mu_{1}}\mu(t)
\quad \text{for every $t\in [t_{1}, t_{2}]$ with $\mu(t)\geq \mu_{1}$}.
\]
Thus we have \eqref{EstimateE11}.

Next, suppose  $\mu(t)< \delta_{1}$. In order to simplify the notation, we put $Q(t)=e^{i\theta(t)}Q(\cdot-y(t))$.
We also recall that $g(t)=e^{-i\theta(t)}[u(t)-Q(t)]$.
By definition of $\EE(t)$, given in \eqref{Error11}, we can write for $t\in [t_{1}, t_{2}]$ (see \eqref{CViral11}-\eqref{CViral33})
\begin{align}\label{Decomp11}
\EE(t)&=\int_{|x|> R}(-8)[|\partial_{x}u(t)|^{2}-|\partial_{x}Q(t)|^{2}]
+4\frac{p-1}{p+1}[|u(t)|^{p+1}-|Q(t)|^{p+1}]\\\label{Decomp12}
&+\int_{|x|> R}[-\partial^{4}_{x}w_{R}][|u(t)|^{2}-|Q(t)|^{2}]
-2\frac{p-1}{p+1}[\partial^{2}_{x}w_{R}][|u(t)|^{p+1}-|Q(t)|^{p+1}]dx\\\label{Decomp33}
&-4\gamma|u(t,0)|^{2}.
\end{align}
By the elemental inequality
\[
||z_{1}|^{\alpha+1}-|z_{2}|^{\alpha+1}|\lesssim|z_{1}-z_{2}|(|z_{1}|^{\alpha}+|z_{2}|^{\alpha})
\quad \text{for any $z_{1}$, $z_{2}\in \C$ and $\alpha>0$},
\]
we get
\begin{equation*}
|\eqref{Decomp11}|+|\eqref{Decomp12}|
\lesssim 
[\|u(t)\|^{\alpha}_{H_{x}^{1}(|x|\geq R)}+\|Q(\cdot-y(t))\|^{\alpha}_{H_{x}^{1}(|x|\geq R)}]
\|g(t)\|_{H_{x}^{1}}
\end{equation*}
with $\alpha\in\left\{1,p\right\}$. Then by Proposition \ref{Modilation11} (note that $x(t)=y(t)$ because $\mu(t)<\mu_{1}$) and 
\eqref{CompactAgain} we get 
\begin{equation}\label{PrimerIne}
|\eqref{Decomp11}|+|\eqref{Decomp12}|
\lesssim
[\|u(t)\|^{\alpha}_{H_{x}^{1}(|x-x(t)|\geq \rho_{\epsilon})}+\|Q\|^{\alpha}_{H_{x}^{1}(|x|\geq \rho_{\epsilon})}] \mu(t)
\lesssim_{Q}
\epsilon \mu(t).
\end{equation}
Finally, since $\mu(t)<\mu_{1}$, again estimate \eqref{Estimatemodu} in Proposition \ref{Modilation11} implies
\begin{equation}\label{Secondine}
\eqref{Decomp33}\lesssim 
 \mu(t)^{2}
\end{equation}
Thus, combining \eqref{PrimerIne} and \eqref{Secondine} we obtain \eqref{EstimateE22}. This completes the proof of Claim II.
\end{proof}
\end{proof}

\begin{proposition}\label{Spatialcenter}
Let $[t_{1}, t_{2}]$ be an interval of $[0, \infty)$. Then there exists $\mu_{1}>0$ and $C>0$ such that
\begin{equation}\label{BoundCenter}
|x(t_{1})-x(t_{2})|\leq \frac{C}{\mu_{1}}\int^{t_{2}}_{t_{1}}\mu(t)+2C.
\end{equation}
\end{proposition}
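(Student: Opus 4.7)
The plan is to partition $[t_1, t_2]$ into maximal subintervals on which $\mu(t)$ lies below or above a fixed threshold $\mu_1 \in (0, \mu_0)$, and to bound $|x(t_1) - x(t_2)|$ by summing contributions of both types via a telescoping argument.

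Fix $\mu_1 \in (0, \mu_0)$ small enough that Proposition \ref{Modilation11} applies whenever $\mu(t) < \mu_1$, and recall from Lemma \ref{Parametrization} that $|x_0(t) - y(t)| \leq C_0$ uniformly on $I_0$. On any maximal subinterval $[a,b] \subset \{t \in [t_1,t_2] : \mu(t) < \mu_1\}$ the relation $x \equiv y$ holds (since $\{\mu < \mu_1\} \subset I_0$), so using $|y'(t)| \lesssim \mu(t)$ I obtain
\[
|x(b) - x(a)| \leq \int_a^b |y'(t)|\, dt \lesssim \int_a^b \mu(t)\, dt.
\]
On any maximal subinterval $[a,b] \subset \{t \in [t_1,t_2] : \mu(t) \geq \mu_1\}$, the length estimate $b - a \leq \mu_1^{-1}\int_a^b \mu(t)\, dt$ reduces matters to a crude linear-growth bound $|x(b)-x(a)| \leq C(b-a) + C$, which in turn yields $|x(b) - x(a)| \leq C\mu_1^{-1}\int_a^b \mu(t)\, dt + C$.

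The main step is to establish this crude estimate, which reduces to a unit-time claim: there exists $C > 0$ with $|x(t)-x(s)| \leq C$ whenever $|t-s| \leq 1$. I would argue by contradiction, using the pre-compactness of $\{u(t,\cdot+x(t)) : t \geq 0\}$ in $H^1$ together with the continuity $u \in C([0,\infty); H^1)$: if $|t_n-s_n| \leq 1$ and $|x(t_n)-x(s_n)| \to \infty$, then, along subsequences, $u(t_n,\cdot+x(t_n)) \to v$ and $u(s_n,\cdot+x(s_n)) \to w$ in $H^1$ with $v,w$ nontrivial by \eqref{EquiNorm}, while passing $t_n - s_n$ to a limit $\tau \in [0,1]$ through the $H^1$-continuous NLS flow identifies $w$ with a translate of the time-$\tau$ evolution of $v$ by a divergent amount, forcing $\|w\|_{H^1} = 0$, a contradiction. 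Iterating over consecutive unit-length pieces then gives the linear-growth bound on the bad set.

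Summing the per-subinterval estimates telescopically over the (countably many) maximal components of $\{\mu < \mu_1\}$ and $\{\mu \geq \mu_1\}$ inside $[t_1,t_2]$, and using Lemma \ref{Parametrization} to absorb the bounded jumps that may occur at transitions between the $y$ and $x_0$ definitions of $x$, yields
\[
|x(t_1) - x(t_2)| \leq \frac{C}{\mu_1}\int_{t_1}^{t_2}\mu(t)\, dt + 2C,
\]
where the additive $2C$ handles at most two boundary contributions at the endpoints $t_1$ and $t_2$. The principal obstacle is the unit-time compactness estimate on the bad set; once that is in hand, the remainder is bookkeeping combined with integration of $|y'|$ and the elementary length bound on each bad component.
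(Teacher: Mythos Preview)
Your telescoping argument has a genuine gap in the accounting of additive constants. On each maximal ``bad'' component $[a,b]\subset\{\mu\geq\mu_1\}$ you obtain
\[
|x(b)-x(a)|\leq C(b-a)+C\leq \frac{C}{\mu_1}\int_a^b\mu(t)\,dt+C,
\]
and there is no mechanism to control how many such components occur. Since $\mu$ is merely continuous, the set $\{\mu\geq\mu_1\}\cap[t_1,t_2]$ may have countably many short components, and the sum of the ``$+C$'' contributions is then unbounded. Your claim that only two boundary terms survive (``the additive $2C$ handles at most two boundary contributions'') is not justified; the same objection applies to the ``bounded jumps at transitions'' coming from Lemma~\ref{Parametrization}, which are also potentially infinite in number.

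The paper's proof avoids this by inserting an extra ingredient you are missing: a \emph{no-oscillation dichotomy} on unit intervals (Step~2). Namely, there exists $\mu_1>0$ such that for every $T\geq 0$ either $\inf_{[T,T+1]}\mu\geq\mu_1$ or $\sup_{[T,T+1]}\mu<\mu_0$. This is proved by contradiction using compactness, continuous dependence of the flow, and the strict positivity of $\mu$ from Lemma~\ref{GlobalS}. With this in hand one partitions $[t_1,t_2]$ into \emph{unit-length} pieces rather than level-set components: on a ``bad'' unit interval the unit-time bound gives $|x(T+1)-x(T)|\leq C$ while $\int_T^{T+1}\mu\geq\mu_1$, so the additive constant is absorbed into the integral with no residual; on a ``good'' unit interval one integrates $|y'|\lesssim\mu$ as you do. Only the two partial intervals at $t_1,t_2$ produce the final $+2C$. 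Your unit-time compactness estimate (Step~1) is correct in spirit and matches the paper; what is missing is precisely this dichotomy that rules out rapid oscillation of $\mu$ across the two thresholds.
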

\begin{proof}
 The proof is divided into three steps.

\textsl{Step 1.} There exists a constant $C$ such that
\begin{equation}\label{step11}
|x(t)-x(s)|\leq C \quad \text{for all $t$, $s\geq 0$ such that $|t-s|\leq 1$}.
\end{equation}
 The proof of \eqref{step11} is the same (with obvious modifications) as the one given in \cite[Lemma 4.11]{MiaMurphyZheng2021} therefore we omit the details.

\textsl{Step 2.}
There exists $\mu_{1}>0$ such that either
\begin{equation}\label{MinMax}
\inf_{t\in [T, T+1]}\mu(t)\geq \mu_{1} \quad \text{or}\quad
\sup_{t\in [T, T+1]}\mu(t)<\mu_{0}\quad \text{for all $T\geq 0$}.
\end{equation}
Indeed, \eqref{MinMax} is proved by contradiction. Assume that there exist $t_{n}^{\ast}\geq 0$ and two sequences
$t_{n}$, $t^{\prime}_{n}\in  [t_{n}^{\ast}, t_{n}^{\ast}+1]$ such that, possibly for a subsequence only,
\begin{align}\label{ContraStep2}
&\mu(t_{n})\to 0 \quad \text{and}\quad \mu(t^{\prime}_{n})\geq \mu_{0} \quad \text{as $n\to \infty$},\\
\label{ContraLimit}
& t_{n}-t^{\prime}_{n}\to t^{\ast}\in[-1,1].
\end{align}
By Lemma \ref{DeltaZero} and Step 1 we infer that $|x(t^{\prime}_{n})|$ is bounded.  Thus, by \eqref{CompacNew} we obtain that there exits
$\varphi\in H^{1}(\R)$ such that 
\begin{equation}\label{Step2Conver}
\text{$u(t^{\prime}_{n})\to \varphi$ strongly in $H^{1}(\R)$ as $n\to \infty$}.
\end{equation}
Next, we show that $\mu(t_{n}^{\prime}+t^{\ast})=\mu(u(t_{n}^{\prime}+t^{\ast}))\to 0$ as $n\to \infty$. 
Indeed, 
the trivial estimate
\[
\|\int^{t_{2}}_{t_{1}}e^{isH_{\gamma}}F(s)ds\|_{H^{1}}\leq \|F(s)\|_{L^{1}_{t}H_{x}^{1}([t_{1}, t_{2}]\times \R)}
\]
implies (by Sobolev embedding)
\[\begin{split}
\|u(t_{n})-u(t_{n}^{\prime}+t^{\ast})\|_{H_{x}^{1}}
&\lesssim \||u(s)|^{p-1}u(s)\|_{L^{1}_{t}H_{x}^{1}([t_{n}, t_{n}^{\prime}+t^{\ast}]\times \R)}\\
&\lesssim \int^{t_{n}^{\prime}+t^{\ast}}_{t_{n}}\|u\|^{p-1}_{L^{\infty}_{x}}\|u\|_{H^{1}_{x}}ds
\lesssim_{Q} |(t_{n}^{\prime}-t_{n})+t^{\ast}|\to 0
\end{split}
\]
as $n\to \infty$. Here we have used that $\|u\|_{L^{\infty}_{t}H^{1}_{x}(\R\times\R)} \lesssim S_{0}(Q)$ (see Lemma \ref{GlobalS}).
Therefore, by continuity of functional $\mu(t)$ and \eqref{ContraStep2} we get  
\begin{equation}\label{Contradeltazero}
\delta(t_{n}^{\prime}+t^{\ast})\to 0 \quad \text{as $n\to \infty$}.
\end{equation}

Next, we show that there exists a positive constant $k$ such that $\mu(t_{n}^{\prime}+t^{\ast})\geq k$,
which is a contradiction with \eqref{Contradeltazero} and finished the proof. Indeed, by using \eqref{CompacNew}
we see that $M(\varphi)=M(Q)$, $E_{\gamma}(\varphi)=M(Q)$ and $P_{\gamma}(\varphi)\geq 0$. Let $v(t)$ the maximal solution
of \eqref{NLS} corresponding to the initial value $\varphi$. By Lemma \ref{GlobalS} we have that $v(t)$ is a global solution 
with $\mu(v(t))>0$ for all $t\in \R$. Then, since $t^{\ast}\in [-1, 1]$, by \eqref{Step2Conver} and continuous dependence (see\cite[Chapter 4]{CB}) we obtain  $u(t_{n}^{\prime}+t^{\ast})\to v(t^{\ast})$ for $n$ large enough. In particular, by continuity of $\mu(t)$
we infer that $\mu(t_{n}^{\prime}+t^{\ast})\geq k>0$ for $n$ large enough. This completes the proof of Step 2.

\textsl{Step 3.} Conclusion.
With Steps 1 and 2 and Proposition \ref{Modilation11} in hand, the proof of Proposition \ref{Spatialcenter} is the same as that of \cite[Proposition 4.10]{MiaMurphyZheng2021}.
\end{proof}

\begin{proof}[{Proof of Proposition \ref{NaoBounded}}]
The proof of Proposition \ref{NaoBounded} is almost the same as that given in \cite[Proposition 4.8]{MiaMurphyZheng2021}. For the sake of 
completeness, we repeat the argument in \cite{MiaMurphyZheng2021}.

By hypothesis we have that $|x(t)|$ is unbounded. Thus, we can pick a sequence $t_{n}\to \infty$ such that
\[
|x(t_{n})|=\sup_{t\in [0, t_{n}]}|x(t)|
\quad \text{with}\quad 
|x(t_{n})|\to\infty\quad \text{as $n\to \infty$}.
\]
Choosing $N\in \N$ large enough, we can apply Lemma \ref{DeltaZero} to get
\[
\mu(t_{n})<\frac{\mu_{1}}{100CC_{\mu_{1}}}\quad \text{for all $n\geq N$}.
\]
Here $\mu_{1}$ and  the constant $C$
is as in Proposition \ref{Spatialcenter} and $C_{\mu_{1}}=C(\mu_{1})$ is as in Lemma \ref{Lemma11}.
Consequently, by \eqref{BoundT1} and \eqref{BoundCenter}  we obtain
\begin{align*}
	|x(t_{n})-x(t_{N})|&\leq \frac{C}{\mu_{1}}\int^{t_{n}}_{t_{N}}\mu(t)dt+2C\\
	&\leq K+\frac{CC_{\mu_{1}}}{\mu_{1}}|x(t_{n})|(\mu(t_{n})+\mu(t_{N}))\\
	&\leq K+\frac{1}{2}|x(t_{n})|,
	\end{align*}
	where $K$ is a positive constant. This implies that for $n\geq N$
	\[
	|x(t_{n})|\leq |x(t_{N})|+2K,
	\]
	which is a contradiction. This completes the proof of the proposition.
\end{proof}

Now we are ready to give the proof of the Theorem \ref{Th2}.

\begin{proof}[{Proof of Theorem \ref{Th2}}]
Assume that Theorem \ref{Th2} fails, then  Proposition \ref{Th2} implies that there exists a element 
$u_{0}\in H^{1}(\R)$ and a spatial center $x(t)$ such that the corresponding solution to equation \eqref{NLS} verifies that  
$\left\{u(t, \cdot+ x(t)): t\geq 0\right\}$ is precompact in $H^{1}(\R)$. Then, combining Propositions \ref{BoundedUN} and \ref{NaoBounded} we get a contradiction.
\end{proof}

\section{Proof of Theorem \ref{BoundTheorem}}\label{BoundL}
In this section, we prove Theorem \ref{BoundTheorem}. We follow closely the proof of Theorem 1.5 in \cite{KillipMurphyVZ}.

\begin{proof}[{Proof of Theorem \ref{BoundTheorem}}]
Let $\varphi_{n}=(1-\epsilon_{n})Q(x-x_{n})$ with $\epsilon_{n}\to 0$ and $|x_{n}|\to \infty$. 
Since $|Q(x_{n})|\to 0$ as $n\to \infty$, we infer that
\[E_{\gamma}(\varphi_{n})[M(\varphi_{n})]^{\sigma}\nearrow  E_{0}(Q)[M(Q)]^{\sigma}
\quad \text{and}\quad 
P_{\gamma}(\varphi_{n})\to 0
\]
as $n\to \infty$. By using the fact that $|Q(x_{n})|> 0$ for all $n\in \N$ and $\gamma<0$ we have $P_{\gamma}(\varphi_{n})>0$
for all $n\in \N$. Thus, Theorem \ref{Th1} implies that the corresponding solution $u_{n}$ to \eqref{NLS} with initial data $\varphi_{n}$
exists globally and scatters.
We set
\[
\tilde{v}_{n}(t,x)=(1-\epsilon_{n})e^{it}Q(x-x_{n}).
\]
Let $T>0$ fixed. We want to apply Lemma \ref{perturb} (Long time perturbation) over $[-T,T]\times\R$, thus we need to estimate $\|e_{n}\|_{L^{a}_{t}L_{x}^{r}([-T,T]\times \R)}$,
where
\[
e_{n}(t,x)=\tilde{v}_{n}(t,x)-\left[(1-\epsilon_{n})e^{-itH_{\gamma}}Q(x-x_{n})+i\int^{t}_{0}e^{-i(t-s)H_{\gamma}}(|\tilde{v}_{n}(s)|^{p-1}\tilde{v}_{n}(s))ds \right].
\]
Indeed, as $R(t,x)=e^{it}Q(x)$ is a solution of the free NLS (i.e., \eqref{NLS} with $\gamma=0$) and $|x_{n}|\to \infty$,  using the same argument developed in the proof of Proposition 3.4  in \cite{BaniVisci2016}, we obtain 
\begin{equation}\label{AproxR}
R_{n}(t,x)=e^{-itH_{\gamma}}Q(x-x_{n})+i\int^{t}_{0}e^{-i(t-s)H_{\gamma}}(|R_{n}(s)|^{p-1}R_{n}(s))ds+g_{n}(t,x),
\end{equation}
where $(t,x)\in [-T,T]\times\R$ and $\|g_{n}(t,x)\|_{L^{a}_{t}L_{x}^{r}([-T,T]\times \R)}\to 0$ as $n\to \infty$. Here $R_{n}(t,x)=e^{it}Q(x-x_{n})$. 
By \eqref{AproxR} we have that
\begin{align*}
e_{n}(t,x)&=[\tilde{v}_{n}(t,x)-R_{n}(t,x)]+\epsilon_{n}e^{-itH_{\gamma}}Q(x-x_{n})\\
&+(1-(1-\epsilon)^{p})i\int^{t}_{0}e^{-i(t-s)H_{\gamma}}(|\tilde{v}_{n}(s)|^{p-1}\tilde{v}_{n}(s))ds-
g_{n}(t,x).
\end{align*}
Strichartz's estimates implies
\[
\|e_{n}\|_{L^{a}_{t}L_{x}^{r}([-T,T]\times \R)}\lesssim_{T, Q}[\epsilon_{n}+(1-(1-\epsilon)^{p})]
+\|g_{n}\|_{L^{a}_{t}L_{x}^{r}([-T,T]\times \R)}.
\]
Therefore, for any $T>0$ fixed we get $\|e_{n}\|_{L^{a}_{t}L_{x}^{r}([-T,T]\times \R)}\to 0$ as $n\to \infty$.
Finally, since $\|\tilde{v}_{n}\|_{L^{a}_{t}L_{x}^{r}([-T,T]\times \R)}\gtrsim_{Q}T$, it follows from Lemma \ref{perturb} that
\[
\|u_{n}\|_{L^{a}_{t}L_{x}^{r}([-T,T]\times \R)}\gtrsim_{Q}T,
\]
which finished the proof.
\end{proof}

\medskip

\noindent {\bf Acknowledgments}.
The second author is supported by JSPS KAKENHI Grant-in-Aid for Early-Career Scientists JP18K13444.

%\bibliographystyle{siam}
%\bibliography{bibliografia}

\end{document}